\let\mathbb\mathds
\DeclareMathAlphabet\oldmathcal{OMS}        {cmsy}{b}{n}
\SetMathAlphabet    \oldmathcal{normal}{OMS}{cmsy}{m}{n}
\DeclareMathAlphabet\oldmathbcal{OMS}       {cmsy}{b}{n}
\newtheorem{theorem}{Theorem}[section]
\newtheorem{lemma}[theorem]{Lemma}
\newtheorem{proposition}[theorem]{Proposition}
\newtheorem{corollary}[theorem]{Corollary}
\newtheorem{definition}[theorem]{Definition}
\newtheorem{def/prop}[theorem]{Definition/Proposition}
\newtheorem{algorithm}[theorem]{Algorithm}
\newenvironment{example}{\medskip \refstepcounter{theorem}
\noindent  {\bf Example \thetheorem}.\rm}{\,}
\newenvironment{remark}{\medskip \refstepcounter{theorem}
\newcommand     {\comment}[1]   {}
\newcommand{\mute}[2] {}
\newcommand     {\printname}[1] {}

\noindent  {\bf Remark \thetheorem}.\rm}{\,}
\def\<{\langle}
\def\>{\rangle}
\def\BOne{{\mathchoice {\rm 1\mskip-4mu l} {\rm 1\mskip-4mu l}
                          {\rm 1\mskip-4.5mu l} {\rm 1\mskip-5mu l}}}
\def\fract#1#2{\raise4pt\hbox{$ #1 \atop #2 $}}
\def\decdnar#1{\phantom{\hbox{$\scriptstyle{#1}$}}
\left\downarrow\vbox{\vskip15pt\hbox{$\scriptstyle{#1}$}}\right.}
\def\bbc{{\mathbb C}}
\def\bbp{{\mathbb P}}
\def\bbq{{\mathbb Q}}
\def\bbr{{\mathbb R}}
\def\bbz{{\mathbb Z}}
\def\gra{\alpha}
\def\grb{\beta}
\def\gre{\epsilon}
\def\grg{\gamma}
\def\grk{\kappa}
\def\grl{\lambda}
\def\gro{\omega}
\def\grr{\rho}
\def\grt{\tau}
\def\grD{\Delta}
\def\grG{\Gamma}
\def\grL{\Lambda}
\def\grO{\Omega}
\def\grS{\Sigma}
\def\bfl{{\bf l}}
\def\bfr{{\bf r}}
\def\bfv{{\bf v}}
\def\bfw{{\bf w}}
\def\cala{{\mathcal A}}
\def\cald{{\mathcal D}}
\def\calf{{\mathcal F}}
\def\cali{{\mathcal I}}
\def\calp{{\mathcal P}}
\def\cals{{\oldmathcal S}}
\def\calw{{\mathcal W}}
\def\la#1{\hbox to #1pc{\leftarrowfill}}
\def\ra#1{\hbox to #1pc{\rightarrowfill}}
\def\ga{{\mathfrak a}}
\def\gp{{\mathfrak p}}
\def\gr{{\mathfrak r}}
\def\gt{{\mathfrak t}}
\def\gu{{\mathfrak u}}
\def\gz{{\mathfrak z}}
\def\gA{{\mathfrak A}}
\def\gR{{\mathfrak R}}
\def\gtz{\tilde{\mathfrak z}}
\def\hook{\mathbin{\hbox to 6pt{%
                 \vrule height0.4pt width5pt depth0pt
                 \kern-.4pt
                 \vrule height6pt width0.4pt depth0pt\hss}}}
\def\tTheta{\tilde{\Theta}}
\def\ttheta{\tilde{\theta}}
\def\th{\tilde{h}}
\def\tU{\tilde{U}}
\def\12{\xi_{k_1,k_2}}
\def\m5{M^5_{k_1,k_2}}
\begin{document}

\title{The Sasaki Join, Hamiltonian 2-forms, and Constant Scalar Curvature}

\author{Charles P. Boyer and Christina W. T{\o}nnesen-Friedman}\thanks{Both authors were partially supported by grants from the Simons Foundation, CPB by (\#245002) and CWT-F by (\#208799)}
\address{Charles P. Boyer, Department of Mathematics and Statistics,
University of New Mexico, Albuquerque, NM 87131.}
\email{cboyer@math.unm.edu} 
\address{Christina W. T{\o}nnesen-Friedman, Department of Mathematics, Union
College, Schenectady, New York 12308, USA } \email{tonnesec@union.edu}

\keywords{Extremal and constant scalar curvature Sasakian metrics, extremal K\"ahler metrics, join construction, admissible construction}

\subjclass[2000]{Primary: 53D42; Secondary:  53C25}

\maketitle

\markboth{Sasaki Join, Hamiltonian 2-forms, CSC Metrics}{Charles P. Boyer and Christina W. T{\o}nnesen-Friedman}

\begin{abstract}
We describe a general procedure for constructing new explicit Sasaki metrics of constant scalar curvature (CSC), including Sasaki-Einstein metrics, from old ones. We begin by taking the join of a regular Sasaki manifold of dimension $2n+1$ and constant scalar curvature with a weighted Sasakian 3-sphere. Then by deforming in the Sasaki cone we obtain CSC Sasaki metrics on compact Sasaki manifolds $M_{l_1,l_2,\bfw}$ of dimension $2n+3$ which depend on four integer parameters $l_1,l_2,w_1,w_2$. Most of the CSC Sasaki metrics are irregular. We give examples which show that the CSC rays are often not unique on the underlying fixed strictly pseudoconvex CR manifold. Moreover, it is shown that when the first Chern class of the contact bundle vanishes, there is a two dimensional subcone of Sasaki Ricci solitons in the Sasaki cone, and a unique Sasaki-Einstein metric in each of the two dimensional sub cones.
\end{abstract}


\section{Introduction}
The purpose of this paper is to present a general geometric construction that combines the Sasaki join construction of \cite{BG00a,BGO06} with the Hamiltonian 2-form formalism of \cite{ApCaGa06,ACGT04,ACGT08} to construct many new Sasaki metrics of constant scalar curvature. This is a final version of our ArXiv paper \cite{BoTo14aArX} which is also combined with results from our ArXiv paper \cite{BoTo13bArX}. Partial results from this latter paper will then appear in \cite{BoTo14NY}. 
The method described here has already been used by the authors in special cases \cite{BoTo11,BoTo12b,BoTo13}. This method is the following: consider a regular Sasaki manifold $M$ with its `Boothby-Wang circle bundle' $S^1\ra{1.5}M\ra{1.5} N$ over the K\"ahler manifold $N$. For each pair of relatively prime positive integers $(l_1,l_2)$ we form the Sasaki join $M_{l_1,l_2,\bfw}$ of $M$ with a weighted 3-sphere $S^3_\bfw$ (cf. \cite{BG05}, Example 7.1.12), where the components of the weight vector $\bfw=(w_1,w_2)$ are relatively prime positive integers satisfying $w_1\geq w_2$. The latter has a 2-dimensional Sasaki cone $\gt^+_\bfw$ we call the $\bfw$-Sasaki cone. Now we can deform within $\gt^+_\bfw$ to obtain other Sasakian structures. The quasi-regular ones will fiber over a ruled orbifold $(S_n,\grD)$ with the following structure. $S_n$ is a $\bbc\bbp^1$-bundle over $N$ with an orbifold structure on its fibers giving rise to a branch divisor $\grD$. The orbifold $(S_n,\grD)$ is a projectivization $\bbp(\BOne\oplus L_n)$ where $L_n$ is certain line bundle over $N$, and it admits a Hamiltonian 2-form. The explicit nature of this formalism allows us to obtain extremal (or constant scalar curvature) K\"ahler orbifold metrics on $(S_n,\grD)$. Then by a well known procedure we obtain extremal (constant scalar curvature) Sasaki metrics on the join $M_{l_1,l_2,\bfw}$. This approach was initiated in \cite{BoTo13} for the case $l_2=1$ while the case of arbitrary $l_2$ appears in the thesis \cite{Cas14}. An announcement of our general procedure appears in \cite{BoTo14P}.

Our first main theorem is:

\begin{theorem}\label{admjoincsc}
Let $M_{l_1,l_2,\bfw}=M\star_{l_1,l_2}S^3_\bfw$ be the $S^3_\bfw$-join with a regular Sasaki manifold $M$ which is an $S^1$-bundle over a compact K\"ahler manifold $N$ with constant scalar curvature. Then for each vector $\bfw=(w_1,w_2)\in \bbz^+\times\bbz^+$ with relatively prime components satisfying $w_1>w_2$ there exists a Reeb vector field $\xi_\bfv$ in the 2-dimensional $\bfw$-Sasaki cone on $M_{l_1,l_2,\bfw}$ such that the corresponding ray of Sasakian structures $\cals_a=(a^{-1}\xi_\bfv,a\eta_\bfv,\Phi,g_a)$ has constant scalar curvature.
\end{theorem}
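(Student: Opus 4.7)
The plan is to reduce the problem, ray by ray in the $2$-dimensional $\bfw$-Sasaki cone, to the existence of a compatible constant scalar curvature K\"ahler orbifold metric on the ruled orbifold $(S_n,\grD)=\bbp(\BOne\oplus L_n)\to N$ described in the introduction, and then to solve the reduced problem using the Hamiltonian $2$-form / admissible construction of \cite{ACGT08}.

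First I would fix a quasi-regular Reeb vector $\xi_\bfv=v_1\xi_1+v_2\xi_2\in\gt^+_\bfw$ with $v_1/v_2\in\bbq^+$. By the quasi-regular Boothby-Wang correspondence, the transverse K\"ahler geometry of any Sasakian structure in the ray of $\xi_\bfv$ descends to a compatible K\"ahler orbifold metric on $(S_n,\grD)$; the integer $n=\dim_\bbc N$ and the line bundle $L_n$ are determined by $l_1,l_2,w_1,w_2$, and the transverse K\"ahler class (essentially the relative size of the fiber $\bbc\bbp^1$ over $N$) is controlled by the ratio $v_1/v_2$. Since the Sasaki scalar curvature and the transverse scalar curvature differ only by an additive constant depending on $\dim M_{l_1,l_2,\bfw}$, the CSC property upstairs along this ray is equivalent to the CSC property downstairs; the rescaling $\cals_a=(a^{-1}\xi_\bfv,a\eta_\bfv,\Phi,g_a)$ merely fixes the transverse scale.

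Next I would parametrize the admissible metrics in the ACGT momentum formulation: a compatible K\"ahler orbifold metric on $\bbp(\BOne\oplus L_n)\to N$ is encoded by a positive smooth function $\Theta(z)$ on $(-1,1)$ satisfying the orbifold boundary data $\Theta(\pm 1)=0$ and $\Theta'(\pm 1)=\mp 2/w_{\pm}$. Because $N$ has constant scalar curvature, the explicit scalar-curvature formula of \cite{ACGT08} expresses the transverse scalar curvature as the ratio of $(p_c(z)\Theta(z))''$ to a positive polynomial, where $p_c$ is known explicitly in terms of $n$ and the K\"ahler class data. The CSC equation then collapses to a linear ODE of the form $(p_c\Theta)''=c\,p_c$ for a real constant $c$; its general solution is a polynomial, and imposing the four boundary conditions leaves one algebraic compatibility equation in the single remaining modulus $r=v_2/v_1$ (the Futaki-type obstruction to upgrading extremal to CSC in the admissible family).

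The decisive step is therefore to show that this algebraic equation has a positive real solution $r$ under the strict inequality $w_1>w_2$. I would prove this by clearing denominators to obtain a polynomial equation in $r$ whose degree depends on $n$, computing the signs of the polynomial at $r=0$, $r=1$, and as $r\to\infty$, and invoking the intermediate value theorem. The assumption $w_1>w_2$ is precisely what breaks the endpoint symmetry and forces the requisite sign change, while the CSC property of $N$ is what keeps the coefficients of this polynomial explicit and tractable. Rationality of $r$ is not needed for this existence argument, since the transverse K\"ahler-to-Sasaki dictionary upgrades a CSC transverse K\"ahler structure to a CSC Sasakian structure along the real ray through any $\xi_\bfv$ in the Sasaki cone. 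This final existence analysis is the main obstacle; the quasi-regular reduction and the ACGT formalism are routine once the identifications in the introduction are in place.
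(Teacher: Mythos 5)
Your overall strategy is the same as the paper's: quotient by a quasi-regular $\xi_\bfv$ to get a log pair $(S_n,\grD)$, use the admissible/Hamiltonian $2$-form formalism to reduce CSC to a single algebraic compatibility equation in the slope $b=v_2/v_1$, and find a positive root by the intermediate value theorem. However, there are two genuine gaps. First, your proposed sign analysis is at the wrong points. The relevant polynomial $f(b)$ (Equation \eqref{functionf}) satisfies $f(0)<0$ and $f(b)\to-\infty$ as $b\to+\infty$, so checking signs at $0$, $1$, and $\infty$ need not produce a sign change at all. The crucial observation in the paper is that the degenerate value $b=w_2/w_1$ (i.e.\ $(v_1,v_2)=(w_1,w_2)$, where $n=0$ and the quotient is a product rather than an admissible ruled orbifold) is a \emph{triple} root of $f$, and
$$f'''\Bigl(\tfrac{w_2}{w_1}\Bigr)=3(d_N+1)(d_N+2)\,l_1 w_1^{d_N}w_2^{d_N}(w_1-w_2)>0,$$
which is exactly where $w_1>w_2$ enters: it forces $f(b)>0$ just to the right of $w_2/w_1$, and combined with $f\to-\infty$ this yields a root in $(w_2/w_1,+\infty)$ that is automatically distinct from the forbidden value. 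Without locating and expanding around this triple root, your IVT argument does not close. You should also note that $n$ is not $\dim_\bbc N$ but the degree of $L_n$, namely $n=l_1(w_1v_2-w_2v_1)/s$, so the base orbifold itself varies with $\bfv$, not just the K\"ahler class.

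Second, your one-sentence dismissal of the irrational case (``Rationality of $r$ is not needed \dots since the transverse K\"ahler-to-Sasaki dictionary upgrades a CSC transverse K\"ahler structure to a CSC Sasakian structure'') is precisely the step that requires real work. When the root $b$ is irrational the Reeb field $\xi_\bfv$ is irregular, there is no quotient orbifold, and hence no K\"ahler structure downstairs to ``upgrade.'' The paper resolves this in Section \ref{adsas} by showing (Lemma \ref{vfam}) that the admissible transverse K\"ahler data, suitably normalized, extends to a family varying smoothly over all real $\bfv\in(\bbr^+)^2$, using a limit of quasi-regular structures; the authors explicitly record that an earlier version of their argument had a gap at exactly this point. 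Your proposal as written reproduces that gap.
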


The manifolds $M_{l_1,l_2,\bfw}$ can also be realized as certain three dimensional lens space bundles over $N$.

Suppose in addition that the scalar curvature of $N$ satisfies $s_N\geq 0$, then we obtain more information about extremal Sasaki metrics. In fact, we have
\begin{theorem}\label{allextr}
Suppose that in addition to the hypothesis of Theorem \ref{admjoincsc} the scalar curvature of N satisfies $s_N\geq 0$, then the $\bfw$-Sasaki cone is exhausted by extremal Sasaki metrics. In particular, if the K\"ahler structure on $N$ admits no Hamiltonian vector fields, then the entire Sasaki cone $\grk$ of the join $M_{l_1,l_2,\bfw}=M\star_{l_1,l_2}S^3_\bfw$ can be represented by extremal Sasaki metrics.
\end{theorem}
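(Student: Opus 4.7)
\medskip\noindent
\textbf{Proof proposal for Theorem \ref{allextr}.} The plan is to reduce the extremal Sasaki problem on the join $M_{l_1,l_2,\bfw}$ to an extremal K\"ahler problem on the ruled orbifold $(S_n,\grD)=\bbp(\BOne\oplus L_n)\to N$ via the admissible/Hamiltonian 2-form machinery of \cite{ACGT08}, and then check that the sign of $s_N$ is exactly what removes the obstruction for every Reeb ray in $\gt^+_\bfw$, not merely a single one as in Theorem \ref{admjoincsc}. First I would fix a quasi-regular Reeb vector field $\xi_\bfv\in\gt^+_\bfw$ with $\bfv=(v_1,v_2)$; the quotient is an admissible K\"ahler orbifold on $(S_n,\grD)$, and the transverse K\"ahler class is parametrized by a single real number $r\in(-1,1)$ coming from the ratio of $v_1,v_2$, and the fiber Bochner form. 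By the general theory, extremality of this transverse K\"ahler metric is equivalent to positivity on $[-1,1]$ of an explicit extremal polynomial $F_\bfv(z)$ whose coefficients are determined by the scalar curvature $s_N$ of the base, the integers $l_1,l_2,w_1,w_2$, and $r$.

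Next I would carry out the key computation: expand $F_\bfv(z)$ in the form $F_\bfv(z)=(1-z^2)\bigl(\alpha(r)+\beta(r)\,s_N + \gamma(r,z)\bigr)$ where $\alpha,\gamma$ are manifestly non-negative for all admissible $r$ and $\beta(r)>0$. (The precise form is read off from the calculations in \cite{ACGT08}; this is essentially the same polynomial used in the proof of Theorem \ref{admjoincsc}, but now treated as a function of the Reeb parameter $r$ as well.) The hypothesis $s_N\ge 0$ therefore forces $F_\bfv>0$ on $(-1,1)$ for \emph{every} $\bfv\in \gt^+_\bfw\cap\bbq^2$, which produces an admissible extremal K\"ahler orbifold metric on $(S_n,\grD)$ for every quasi-regular Reeb vector field in the $\bfw$-Sasaki cone. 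The standard inverse of the Boothby-Wang/orbifold circle bundle construction then produces an extremal Sasaki metric on $M_{l_1,l_2,\bfw}$ in the ray of $\xi_\bfv$.

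To cover the irregular Reeb fields, I would combine density of the quasi-regular rays in $\gt^+_\bfw$ with the openness of the set of extremal Reeb vectors in the Sasaki cone (Boyer-Galicki-Simanca). More directly, since the admissible/Hamiltonian 2-form construction is continuous in $\bfv$ and $F_\bfv$ depends continuously on $\bfv$, the positivity derived from $s_N\ge 0$ persists for irrational ratios, and the extremal transverse K\"ahler structure extends to every $\xi_\bfv\in \gt^+_\bfw$; this gives the first assertion. For the final assertion, I would argue that when $N$ admits no non-parallel Hamiltonian vector fields, the Matsushima decomposition of the transverse automorphism algebra forces every Killing potential on the join to come from the two-torus action on the $S^3_\bfw$-factor that defines the $\bfw$-Sasaki cone, so that a maximal torus of the CR automorphism group of $M_{l_1,l_2,\bfw}$ is exactly this two-torus. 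Hence $\grk=\gt^+_\bfw$, and the previous step applies to all of $\grk$.

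The principal obstacle will be the explicit positivity analysis of $F_\bfv(z)$ for an arbitrary admissible Reeb parameter: one must verify that the $s_N$-dependent term really does enter with the favorable sign \emph{uniformly} as $\bfv$ ranges over the cone. Once this coefficient structure is laid out carefully using the formulas of \cite{ACGT08} in the join parametrization, the remainder of the argument is formal: openness/continuity in $\bfv$, the orbifold Boothby-Wang inversion, and the Matsushima-type identification of the maximal torus.
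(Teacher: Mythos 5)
Your overall architecture is the paper's: quotient by a quasi-regular $\xi_\bfv$ to get an admissible K\"ahler orbifold $(S_n,\grD)$ with parameter $r$ determined by $\bfv$, reduce extremality to the boundary-value problem for the profile $F$, establish positivity of $F$ from $s_N\ge 0$, lift back through the orbifold Boothby--Wang construction, and then pass from quasi-regular to all of $\gt^+_\bfw$. However, the step where the hypothesis $s_N\ge 0$ actually does its work is exactly the step you leave unproved, and the mechanism you propose for it is not the right one. You assert a decomposition $F_\bfv(z)=(1-z^2)\bigl(\alpha(r)+\beta(r)s_N+\gamma(r,z)\bigr)$ with $\alpha,\gamma$ ``manifestly non-negative'' and $\beta>0$, to be ``read off'' from \cite{ACGT08}. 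No such manifest coefficient positivity is available: the extremal profile is a polynomial of degree $d_N+3$ whose affine dependence on $s_{N_n}$ does not organize itself into nonnegative pieces, and if it did the Hwang--Guan technique would be unnecessary. The paper's proof (Proposition \ref{extrexistence}) instead runs a root-counting argument: if $F$ failed to be positive on $(-1,1)$, the endpoint conditions force $F''$ to vanish at least twice there, hence the quadratic factor $P(\gz)=2d_Ns_{N_n}r+(\alpha\gz+\beta)(1+r\gz)$ of $F''/(1+r\gz)^{d_N-1}$ has two roots in $(-1,1)$ with $P$ negative at the two interior maxima of $F$; but $P(-1/r)=2d_Ns_{N_n}r\ge 0$ (the only place $s_N\ge 0$ enters) then forces a third root of a quadratic, a contradiction. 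Without this (or an equivalent) argument your proof is incomplete at its central point.

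Two smaller issues. First, ``density of quasi-regular rays plus openness of the extremal set'' does not yield exhaustion of the cone -- an open dense set need not be everything -- so that route to the irregular case fails as stated; your fallback (continuity of the admissible data in $\bfv$) is the correct move and is what the paper actually does, extending the admissible Sasaki data to real $v_1,v_2$ (Lemma \ref{vfam}) so that the extremal $\tilde{\Theta}$ is defined and positive for every ray, rational or not. Second, for the final assertion the Matsushima machinery is much heavier than needed: by Equation \eqref{sasconejoin} the Sasaki cone of the join is $\gt^+_1+\gt^+_\bfw$ modulo the joining vector field $L_{l_1,l_2,\bfw}$, so if $N$ has no Hamiltonian vector fields then $\gt_1$ is spanned by the Reeb field alone and the full cone collapses to the two-dimensional $\bfw$-cone, to which the first part applies directly.
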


A particular example of interest when the hypothesis of the last sentence of Theorem \ref{allextr} is satisfied is when $N$ is an algebraic K3 surface. In this case there are many choices of complex structures and many choices of line bundles. But in all cases $M=21\# (S^2\times S^3)$. It is interesting to contemplate the possible diffeomorphism types of the 7-manifolds $21\#(S^2\times S^3)\star_{l_1,l_2} S^3_\bfw$ in this case.

We also give examples where there are more than one CSC ray in the same $\bfw$-Sasaki cone. Indeed, generally we have
\begin{theorem}\label{3cscthm}
Suppose that in addition to the hypothesis of Theorem \ref{admjoincsc} the scalar curvature of N satisfies $s_N> 0$. Then for sufficiently large $l_2$ there are at least three CSC rays in the $\bfw$-Sasaki cone of the join $M_{l_1,l_2,\bfw}$.
\end{theorem}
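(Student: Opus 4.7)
The plan is to reduce the existence of CSC rays in the $\bfw$-Sasaki cone $\gt^+_\bfw$ to the zero counting of an explicit one-variable polynomial produced by the Hamiltonian 2-form/admissible machinery, and then to force at least three zeros for large $l_2$ by an asymptotic argument that crucially uses $s_N>0$.

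First, as in the setup already invoked for Theorem \ref{admjoincsc}, the quasi-regular Reeb vector fields in $\gt^+_\bfw$ are parametrized (modulo overall scale) by a single real variable $r$ running over an open interval $I$ determined by the positivity of the Reeb field, and each such $r$ gives rise to an admissible K\"ahler class on the ruled orbifold $(S_n,\grD)$. The ACGT admissible construction provides a canonical extremal K\"ahler representative in each class, whose transverse scalar curvature is constant precisely when a single polynomial equation $F_{l_1,l_2,\bfw}(r)=0$ holds. The coefficients of $F_{l_1,l_2,\bfw}$ are polynomial in $l_1,l_2,w_1,w_2,n$ and affine in $s_N$, and Theorem \ref{admjoincsc} furnishes at least one root $r_0\in I$.

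Second, I would expose the $l_2$-dependence by extracting the leading power of $l_2$ from each coefficient of $F_{l_1,l_2,\bfw}(r)$. After dividing by that leading power, the resulting normalized polynomial converges, as $l_2\to\infty$, to a polynomial $B(r)$ whose coefficients depend only on $l_1,w_1,w_2,n$ and $s_N$. The key technical claim is that $B$ has at least three simple zeros inside $I$. This is exactly where the strict positivity $s_N>0$ enters: the dominant term in $B$ on $I$ is proportional to $s_N$ times an explicit polynomial in $r$ whose sign pattern on $I$ can be checked by evaluating at the endpoints and a few intermediate test values determined by $w_1,w_2$. With $s_N=0$ or $s_N<0$ the asymptotic argument either degenerates or reverses sign, which is consistent with the hypothesis $s_N>0$.

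Third, an implicit-function/persistence argument shows that each simple zero of $B$ in $I$ perturbs to a nearby simple zero of the normalized $F_{l_1,l_2,\bfw}$ for all $l_2$ sufficiently large, and each such zero corresponds to a distinct CSC ray in $\gt^+_\bfw$. Should the root $r_0$ of Theorem \ref{admjoincsc} happen to collide with one of these three, the count of three is still supplied by $B$ alone.

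The main obstacle is the explicit algebra of the second step: producing $B(r)$ from the ACGT moment-polynomial expressions for the extremal K\"ahler representative, and verifying that $s_N>0$ forces the required three sign changes of $B$ on $I$. The bookkeeping is nontrivial because $F_{l_1,l_2,\bfw}$ arises as a combination of weighted integrals of rational functions in $r$ against densities depending on $l_1,l_2,w_1,w_2$, so isolating the $l_2\to\infty$ asymptotics and tracking the coefficient of $s_N$ requires care. Once $B$ is in hand, the persistence under large-$l_2$ perturbation is routine.
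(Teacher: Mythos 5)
Your first step---reducing CSC rays in $\gt^+_\bfw$ to the positive roots $b\neq w_2/w_1$ of the explicit polynomial $f(b)$ of Equation \eqref{functionf}, whose coefficients are affine in $l_2$---is exactly what the paper does (it uses the slope $b=v_2/v_1$ rather than the admissible parameter $r$, but these are related by a M\"obius transformation, so nothing essential changes). The core of your argument, however, does not work: the normalized limit polynomial $B$ obtained by dividing $f$ by $l_2$ and letting $l_2\to\infty$ does \emph{not} have three simple zeros in the parameter interval. Writing $f(b)=A\,l_2\,g_1(b)+f_0(b)$, one computes $g_1(b)=b\,w_2^{2d_N+2}P(w_1b/w_2)$ with
$P(u)=u^{2d_N+2}-(d_N+1)^2u^{d_N+2}+2d_N(d_N+2)u^{d_N+1}-(d_N+1)^2u^{d_N}+1$.
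This $P$ is palindromic with four sign changes; since $f$ has a triple zero at $b=w_2/w_1$ identically in $l_2$, $P$ has a zero of multiplicity at least three at $u=1$, and the reciprocal-pairing of roots of a palindromic polynomial then forces the multiplicity to be exactly four with no other positive roots. Hence $B=Ag_1$ is nonnegative on $(0,\infty)$ and vanishes only at the boundary point $b=0$ and at the \emph{forbidden} value $b=w_2/w_1$ (to order four). In the paper's own example ($N=\bbc\bbp^2$, $l_1=1$, $\bfw=(3,2)$) one gets $f(b)/l_2\to 3b(2-3b)^4(9b^2+24b+4)$: there is no simple interior zero at all, so there is nothing for an implicit-function/persistence argument to latch onto. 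In fact the three roots of $f$ for large $l_2$ converge to the degenerate points $b=0$, $b=w_2/w_1$ and $b=+\infty$ (in your $r$-coordinate, to the boundary points $r=\pm1$ and the forbidden point $r=0$), so your third step fails as stated.

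The mechanism is a boundary-layer phenomenon, handled in the paper by an intermediate value argument rather than by perturbing simple roots. One checks that $f(0)<0$ and that $f<0$ just to the left of $w_2/w_1$ for \emph{every} $l_2$, because $f=f'=f''=0$ and $f'''=3(d_N+1)(d_N+2)l_1w_1^{d_N}w_2^{d_N}(w_1-w_2)>0$ at $b=w_2/w_1$, these values being independent of $l_2$; meanwhile $f\bigl(\tfrac{w_2}{2w_1}\bigr)$ is affine in $l_2$ with slope a positive multiple of $A$ (equivalently of $s_N>0$), hence positive for $l_2$ large. This produces one root in $\bigl(0,\tfrac{w_2}{2w_1}\bigr)$ and one in $\bigl(\tfrac{w_2}{2w_1},\tfrac{w_2}{w_1}\bigr)$, which together with the root in $\bigl(\tfrac{w_2}{w_1},+\infty\bigr)$ already produced in the proof of Theorem \ref{admjoincsc} gives the three CSC rays. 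Your instinct that $s_N>0$ enters through the sign of the $l_2$-coefficient at an interior test point is exactly right; what must be replaced is the claim about simple zeros of the limit polynomial, in favor of this sign analysis near the two degenerate zeros of $B$.
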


In particular, Theorem \ref{WZex} below gives a countable infinity of inequivalent contact structures on the two $S^5$-bundles over $S^2$ such that there are at least three CSC rays of Sasaki metrics. However, the bouquet phenomenon which is related to distinct underlying CR structures and appears for $S^3$-bundles over Riemann surfaces \cite{Boy10b,Boy10a,BoTo11,BoTo13} seems not to occur in these more general cases. This is related to the topological rigidity of the Boothby-Wang base space as discussed briefly in Section \ref{diffeosec} below. The non-uniqueness described in Theorem \ref{3cscthm} occurs on a fixed strictly pseudoconvex CR manifold and a fixed contact manifold. The former also  illustrates non-uniqueness on the sub-Riemannian level.

It should be mentioned that generally the CSC rays are most often irregular, that is the closure of a generic Reeb orbit is a torus of dimension greater than one. In this regard in Section \ref{adsas} we fill in a gap that occured in the first version of \cite{BoTo13bArX} concerning the application of the admissibility conditions to irregular Sasakian structures. This was kindly pointed to us by an anonymous referee. It has been shown recently that irregular Sasakian structures have irreducible transverse holonomy \cite{HeSu12b}, and that the corresponding K\"ahler cone is K-semistable \cite{CoSz12} for CSC Sasaki metrics. The latter result had been proven previously in the quasi-regular case in \cite{RoTh11}. Also the non-uniqueness phenomenon of CSC Sasakian structures was first shown to occur for the case of $S^3$-bundles over $S^2$ by a different method in \cite{Leg10}. Theorem \ref{3cscthm} shows that this is fairly common.

Finally, if we assume that the $N$ has a positive K\"ahler-Einstein metric and that the first Chern class of the contact bundle vanishes, we obtain

\begin{theorem}\label{admjoinse}
Let $M_{l_1,l_2,\bfw}=M\star_{l_1,l_2}S^3_\bfw$ be the $S^3_\bfw$-join with a regular Sasaki manifold $M$ which is an $S^1$-bundle over a compact positive K\"ahler-Einstein manifold $N$ with a primitive K\"ahler class $[\gro_N]\in H^2(N,\bbz)$. Assume that the relatively prime positive integers $(l_1,l_2)$ are the relative Fano indices given explicitly by 
$$l_{1}=\frac{\cali_N}{\gcd(w_1+w_2,\cali_N)},\qquad   l_2=\frac{w_1+w_2}{\gcd(w_1+w_2,\cali_N)},$$ where $\cali_N$ denotes the Fano index of $N$.  Then for each vector $\bfw=(w_1,w_2)\in \bbz^+\times\bbz^+$ with relatively prime components satisfying $w_1>w_2$ there exists a Reeb vector field $\xi_\bfv$ in the 2-dimensional $\bfw$-Sasaki cone on $M_{l_1,l_2,\bfw}$ such that the corresponding Sasakian structure $\cals=(\xi_\bfv,\eta_\bfv,\Phi,g)$ is Sasaki-Einstein. Moreover, this ray is the only admissible CSC ray in the $\bfw$-Sasaki cone. 

In addition, for each vector $\bfw=(w_1,w_2)\in \bbz^+\times\bbz^+$ with relatively prime components satisfying $w_1>w_2$ every single ray in the 2-dimensional $\bfw$-Sasaki cone on $M_{l_1,l_2,\bfw}$ admits (up to isotopy) a Sasaki-Ricci soliton.

\end{theorem}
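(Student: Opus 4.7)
The plan is to handle the three assertions --- the formula for $(l_1,l_2)$, the existence and uniqueness of the Sasaki--Einstein ray, and the Sasaki--Ricci soliton on every ray --- by working inside the admissible K\"ahler ansatz on the ruled orbifold $(S_n,\grD)$ over $N$. First I would verify that the stated formulas for $l_1$ and $l_2$ are equivalent to the vanishing of the first Chern class $c_1(\cald)$ of the contact bundle on $M_{l_1,l_2,\bfw}$. Since $M$ is the Boothby--Wang bundle over the Fano K\"ahler--Einstein manifold $N$ with $c_1(N)=\cali_N[\gro_N]$ and $[\gro_N]$ primitive in $H^2(N,\bbz)$, the standard first Chern class computation on the join shows that $c_1(\cald)$ is a nonzero multiple of $l_1\cali_N-l_2(w_1+w_2)$ times a primitive class. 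Setting this expression equal to zero and enforcing $\gcd(l_1,l_2)=1$ yields precisely the stated expressions. With $c_1(\cald)=0$ established, the basic first Chern class of every Reeb vector field in the $\bfw$-Sasaki cone is a positive multiple of $[d\eta_\bfv]_B$, so the transverse geometry is of Fano type throughout the entire cone.

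For the Sasaki--Einstein existence I would invoke Theorem \ref{admjoincsc} to produce a CSC ray $\xi_\bfv$ in the $\bfw$-Sasaki cone. The standard fact that a CSC Sasaki metric with $c_1(\cald)=0$ and positive basic first Chern class becomes transverse K\"ahler--Einstein --- hence Sasaki--Einstein after a $D$-homothetic transformation --- then produces the desired Sasaki--Einstein structure in that ray. For uniqueness of the admissible CSC ray in the $\bfw$-Sasaki cone, note that the admissible ansatz reduces the CSC condition to an algebraic equation in the remaining admissible parameter, and combined with the constraint $c_1(\cald)=0$ this equation has a single solution; equivalently, this follows from a Matsushima--Lichnerowicz-type uniqueness of transverse K\"ahler--Einstein metrics in a fixed basic K\"ahler class within the admissible family.

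For the Sasaki--Ricci soliton claim the plan is to work directly in the admissible ansatz on $(S_n,\grD)$. Given any ray $\xi_\bfv$ in the $\bfw$-Sasaki cone, the Tian--Zhu modified Futaki invariant selects a unique holomorphic vector field along which one seeks the soliton. The resulting soliton equation, restricted to admissible K\"ahler metrics on $(S_n,\grD)$ whose base is the K\"ahler--Einstein manifold $N$, separates variables and reduces to a first--order ODE for the momentum profile function on the moment interval $[-1,1]$ with boundary conditions enforcing orbifold smoothness at the two poles. This reduced ODE is explicitly solvable, yielding an admissible K\"ahler--Ricci soliton on $(S_n,\grD)$; its pullback to $M_{l_1,l_2,\bfw}$ is the desired Sasaki--Ricci soliton in the given ray, determined up to isotopy.

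The main obstacle is the soliton existence on rays other than the Sasaki--Einstein one: identifying the soliton vector field through the modified Futaki invariant, and then verifying that the solution of the reduced ODE meets the orbifold boundary conditions at both ends of the moment interval, is the delicate step. Once $c_1(\cald)=0$ is established, the remaining CSC/SE assertions reduce to results already available in the admissible K\"ahler and Sasaki literature, so essentially all the genuinely new analytic work is concentrated in the soliton ODE.
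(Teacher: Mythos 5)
Your identification of $(l_1,l_2)$ with the vanishing of $c_1(\cald_{l_1,l_2,\bfw})$ is exactly the paper's Lemma \ref{c10}, and your route to the Sasaki--Einstein metric is genuinely different from (and in some ways cleaner than) the paper's: instead of verifying the Einstein integral condition \eqref{KEintegral3} directly, you take the CSC ray supplied by Theorem \ref{admjoincsc} and note that $c_1(\cald)=0$ forces $c_1(\calf_\xi)=a[d\eta_\bfv]_B$ for a constant $a$, so the transverse Ricci form minus $2\pi a\,d\eta_\bfv$ is basic--exact with constant trace and hence vanishes by transverse Hodge theory. That argument is valid (modulo checking $a>0$, which follows from positivity of $c_1^{orb}(N\times\bbc\bbp^1[\bfw])$), and it is a legitimate alternative to the paper's direct intermediate-value argument for $j(b)$.

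There are, however, two genuine gaps. First, uniqueness of the admissible CSC ray. You assert that the reduced algebraic equation ``has a single solution'' and offer a Matsushima--Lichnerowicz alternative, but neither works as stated: distinct rays $\xi_\bfv$ in $\gt^+_\bfw$ produce distinct quotient orbifolds $(S_n,\grD)$ with different $n$, different transverse complex structures and different K\"ahler classes, so uniqueness of K\"ahler--Einstein metrics \emph{within a fixed class} says nothing about uniqueness \emph{across rays}. The paper proves this by an explicit count: with the stated $(l_1,l_2)$ the CSC polynomial $f(b)$ factors so that the condition becomes $h(b)=0$ with
$$h(b) = (1+d_N)b^{d_N+3} - (1+2t+d_N t)\, b^{d_N+2} + (2+d_N+t)\,t^{d_N+1} b - (1+d_N)\, t^{d_N+2},\qquad t=\tfrac{w_2}{w_1},$$
and Descartes' rule of signs allows at most three positive roots counted with multiplicity, two of which are consumed by the forced double root at the forbidden value $b=t$. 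Some such quantitative argument is indispensable. Second, for the soliton statement your entire construction takes place on the quotient $(S_n,\grD)$, which exists only for quasi-regular $\xi_\bfv$ (rational slope $v_2/v_1$), whereas the theorem claims a Sasaki--Ricci soliton on \emph{every} ray, most of which are irregular. The paper closes this by showing (Section \ref{adsas}, Lemma \ref{vfam}) that the lifted admissible data --- $m\Theta$, the soliton parameter $a$ solving \eqref{KRSintegral}, and the lifted vector field --- depend smoothly on $(v_1,v_2)$ as real parameters and so extend to irrational slopes; you need some version of this limiting argument. By contrast, the step you single out as delicate, namely solving the weighted integral condition for $a$, is the easy part: the integral tends to $+\infty$ and $-\infty$ as $a\to\pm\infty$ because the weight $e^{-a\gz}$ concentrates at the endpoint where the bracket has a definite sign, so the intermediate value theorem applies.
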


This theorem recovers in a geometric fashion the Sasaki-Einstein metrics obtained previously by physicists in \cite{GMSW04b} by a different method. Some further development of the topology of these manifolds, especially in dimension seven, is given in \cite{BoTo14NY}.

The paper is organized as follows. Section 2 gives a brief review of ruled manifolds that are the projectivization of a complex rank 2 vector bundle of the form $S=\bbp(\BOne\oplus L)$ over a K\"ahler-Einstein manifold $N$. These admit Hamiltonian 2-forms that give rise to the K\"ahler admissible construction that is necessary for our procedure. In the somewhat long Section 3 we describe our join construction, in particular, the join with the weighted 3-sphere, $S^3_\bfw$. We then discuss in detail the orbit structure of quasi-regular Reeb vector fields in the $\bfw$-Sasaki cone. Generally, the quotients appear as orbifold log pairs $(S,\grD)$ which fiber over $N$ with fiber an orbifold of the form $\bbc\bbp^1[v_1,v_2]/\bbz_m$, where $\grD$ is branch divisor, and $\bbc\bbp^1[v_1,v_2]$ is a weighted projective space. In Section 4 we discuss the topology of the joins, giving an algorithm for computing the integral cohomology ring. In Section 5 we present the details of the admissibility conditions on the K\"ahlerian level, while in Section 6 we show how these admissibility conditions lift to the Sasakian level to provide proofs of our main theorems.

\section{Ruled Manifolds}
In this section we consider ruled manifolds of the following form. Let $(N,\gro_N)$ be a compact K\"ahler manifold with primitive integer K\"ahler class $[\gro_N]$, that is, a Hodge manifold. Consider a rank two complex vector bundle of the form $E=\BOne\oplus L$ where $L$ is a complex line bundle on $N$ and $\BOne$ denotes the trivial bundle. By a ruled manifold we shall mean the projectivization $S=\bbp(\BOne\oplus L)$. We can view $S$ as a compactification of the complex line bundle $L$ on $N$ by adding the `section at infinity'. For $x\in N$ we let $(c,z)$ denote a point of the fiber $E_x=\BOne\oplus L_x$. There is a natural action of $\bbc^*$ (hence, $S^1$) on $E$ given by $(c,z)\mapsto (c,\grl z)$ with $\grl\in \bbc^*$. The action $z\mapsto \grl z$ is a complex irreducible representation of $\bbc^*$ determined by the line bundle $L$. Such representations (characters) are labeled by the integers $\bbz$. Thus, we write $L=L_n$ for $n\in \bbz$ and refer to $n$ as the `degree' of $L$.

\subsection{A Construction of Ruled Manifolds}\label{ruledsec}
We now give a construction of such manifolds. Let $S^1\ra{1.6} M\ra{1.6} N$ be the circle bundle over $N$ determined by the class $[\gro_N]\in H^2(N,\bbz)$. We denote the $S^1$-action by $(x,u)\mapsto (x,e^{i\theta}u)$. Now represent $S^3\subset \bbc^2$ as $|z_1|^2+|z_2|^2=1$ and consider an $S^1$-action on $M\times S^3$ given by $(x,u;z_1,z_2)\mapsto (x,e^{i\theta}u;z_1,e^{in\theta}z_2)$. There is also the standard $S^1$-action on $S^3$ given by $(z_1,z_2)\mapsto (e^{i\chi}z_1,e^{i\chi}z_2)$ giving a $T^2$-action on $M\times S^3$ defined by
\begin{equation}\label{T2act}
(x,u;z_1,z_2)\mapsto (x,e^{i\theta}u;e^{i\chi}z_1,e^{i(\chi+n\theta)}z_2).
\end{equation}

\begin{lemma}\label{T2quot}
The quotient by the $T^2$-action of Equation (\ref{T2act}) is the projectivization $S_n=\bbp(\BOne\oplus L_n)$.
\end{lemma}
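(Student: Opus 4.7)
The plan is to take the $T^2$-quotient in two stages, exploiting the fact that the $\chi$-circle acts only on the $S^3$-factor. First I would verify that the $T^2$-action in (\ref{T2act}) is free: if $(e^{i\theta}, e^{i\chi})$ fixes a point $(x,u;z_1,z_2)$, then freeness of the principal $S^1$-action on $M \to N$ forces $\theta \equiv 0$, after which $\chi \equiv 0$ follows from $(z_1,z_2) \neq (0,0)$ on $S^3$. So the quotient is a smooth manifold and may be computed iteratively.

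For the first stage I would quotient by the subgroup $\{(0,\chi)\} \cong S^1$, which acts trivially on $M$ and as the standard Hopf action on $S^3 \subset \bbc^2$. This yields $M \times \bbc\bbp^1$, with homogeneous coordinates $[z_1:z_2]$ on the second factor. The residual $\theta$-circle then descends to the action
\[
(u,\,[z_1:z_2]) \;\longmapsto\; \bigl(e^{i\theta}u,\; [z_1 : e^{in\theta}z_2]\bigr),
\]
which is the principal $S^1$-action on $M$ coupled with the projectivization of the weight-$(0,n)$ linear $S^1$-representation on $\bbc^2$. By definition of the associated bundle construction, the quotient is $M \times_{S^1} \bbc\bbp^1$. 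Since the $S^1$-action on the fiber $\bbc\bbp^1$ comes from a linear action on $\bbc^2$, projectivization commutes with the associated-bundle construction, and
\[
M \times_{S^1} \bbc\bbp^1 \;=\; \bbp\bigl(M \times_{S^1} \bbc^2\bigr).
\]
The $S^1$-representation $\bbc^2$ decomposes as the trivial character plus the character of weight $n$, so the associated rank-two bundle is precisely $\BOne \oplus L_n$ by the definition of $L_n$ recalled just above the lemma. Hence the quotient is $\bbp(\BOne \oplus L_n) = S_n$, as claimed.

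The only genuinely delicate point is bookkeeping of sign conventions: depending on the choice of convention relating a character to its associated line bundle, one could instead land on $\bbp(\BOne \oplus L_{-n})$. This is harmless, however, because twisting by $L_n$ gives $\bbp(\BOne \oplus L_{-n}) \cong \bbp(L_n \oplus \BOne) = \bbp(\BOne \oplus L_n)$. Apart from this, the argument is a routine identification of associated bundles, and I would not expect any significant obstacle.
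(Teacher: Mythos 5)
Your proof is correct and follows essentially the same route as the paper's: verify freeness of the $T^2$-action, recognize the quotient as a $\bbc\bbp^1$-bundle over $N$, and identify it via the weight-$n$ circle representation attached to $L_n$. The only difference is one of explicitness: the paper's proof stops at "the bundle is classified by $n$, which labels the representation on $L_n$," whereas your two-stage quotient and the identification $M\times_{S^1}\bbp(\bbc^2)=\bbp(M\times_{S^1}\bbc^2)=\bbp(\BOne\oplus L_n)$ actually carry out that last step, including the harmless sign/convention issue, so your write-up is if anything more complete.
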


\begin{proof}
First we see from (\ref{T2act}) that the action is free, so there is a natural bundle projection $(M\times S^3)/T^2\ra{1.6} N$ defined by $\pi(x,[u;z_1,z_2])=x$ where the bracket denotes the $T^2$ equivalence class. The fiber is $\pi^{-1}(x)=[u;z_1,z_2]$ which since $u$ parameterizes a circle is identified with $S^3/S^1=\bbc\bbp^1$. This bundle is trivial if and only if $n=0$ and $n$ labels the irreducible representation of $S^1$ on the line bundle $L_n$.
\end{proof}

We can take the line bundle $L_1$ to be any primitive line bundle in ${\rm Pic}(N)$. In particular, we are interested in the taking $L_1$ to be the line bundle associated to the primitive cohomology class $[\gro_N]\in H^2(N,\bbz)$. Then we have

\begin{lemma}\label{c1L}
The following relation holds: $c_1(L_n)=n[\gro_N]$.
\end{lemma}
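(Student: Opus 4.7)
The plan is to unwind the $T^2$-quotient construction of Lemma \ref{T2quot} in order to identify $L_n$ explicitly as an associated line bundle of the principal circle bundle $S^1\ra{1.6}M\ra{1.6}N$, and then invoke the standard functoriality of the first Chern class under characters of $S^1$.

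First, I would locate $L_n$ inside $S_n=\mathbb{P}(\mathds{1}\oplus L_n)$ by restricting the $T^2$-quotient in (\ref{T2act}) to the affine chart where $z_1\neq 0$ (the complement of the ``section at infinity''). On this chart one can use the $\chi$-factor of $T^2$ to gauge-fix $z_1$ to be real and positive; writing $z_1=\sqrt{1-|z_2|^2}$ then requires $|z_2|<1$. The remaining residual action on $(u,z_2)\in M\times\{|z_2|<1\}$ is precisely
\[
\theta\cdot(u,z_2)=(e^{i\theta}u,\,e^{in\theta}z_2),
\]
which exhibits this chart as the open unit-disc subbundle of the associated complex line bundle
\[
L_n\;\cong\;M\times_{\chi_n}\mathbb{C},
\]
where $\chi_n\colon S^1\to S^1$ is the character $e^{i\theta}\mapsto e^{in\theta}$. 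This identification is consistent with the convention that the zero-section $\{z_2=0\}$ is the base $N$ and the divisor $\{z_1=0\}$ is the section at infinity.

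Next I would use the functoriality of the associated bundle construction. Since $\chi_n=\chi_1^n$, one has a canonical isomorphism $M\times_{\chi_n}\mathbb{C}\cong (M\times_{\chi_1}\mathbb{C})^{\otimes n}$, i.e.\ $L_n\cong L_1^{\otimes n}$. Equivalently, classifying maps compose with $n\colon BS^1\to BS^1$, multiplication by $n$ on $H^2(BS^1,\mathbb{Z})$. Either way, by multiplicativity of $c_1$ under tensor products,
\[
c_1(L_n)\;=\;n\,c_1(L_1).
\]
Finally, by the choice of $L_1$ made just before the lemma statement, $c_1(L_1)=[\gro_N]$, so $c_1(L_n)=n[\gro_N]$, as claimed.

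There is no serious obstacle here; the only point that requires care is the bookkeeping in the first step, namely correctly identifying which $T^2$-character acts on the fiber coordinate $z_2$ after gauge-fixing $z_1$, since a sign or factor error there would change the degree of $L_n$. Once the weight-$n$ action is correctly read off from (\ref{T2act}), the Chern class computation is immediate.
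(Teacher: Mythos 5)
Your proof is correct and follows essentially the same route as the paper's: read off from Equation (\ref{T2act}) that the fiber coordinate of $L_n$ transforms with weight $n$ under the residual $S^1$-action, identify $c_1(L_1)=[\gro_N]$ from the definition of $M$ as the circle bundle of $[\gro_N]$, and conclude by multiplicativity of $c_1$ under tensor powers of the associated line bundle. The paper's version is terser (it omits the gauge-fixing of $z_1$ that you spell out), but the content is identical.
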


\begin{proof}
Equation (\ref{T2act}) implies that the $S^1$-action on the line bundle $L_n$ is given by $z\mapsto e^{in\theta}z$. But we know that the definition of $M$ that it is the unit sphere in the line bundle over $N$ corresponding to $n=1$, and this corresponds to the class $[\gro_N]$, that is $c_1(L_1)=[\gro_N]$. Thus, $c_1(L_n)=n[\gro_N]$.
\end{proof}

\subsection{Ruled Manifolds with known Diffeomorphism Type}\label{diffeosec}
There are several cases when the diffeomorphism type of the ruled manifold can be ascertained. First we have the case when $N=\grS_g$ a Riemann surface of genus $g$. It is well known \cite{McDSa} that in this case there are precisely two diffeomorphism types. They are distinguished by their second Stiefel-Whitney class. This gives rise to inequivalent K\"ahler structures belonging to the same underlying symplectic structure (up to symplectomorphism). It also gives rise to non-conjugate maximal tori in the symplectomorphism group, a fact that was exploited in \cite{Boy10b,Boy10a,BoPa10,BoTo11,BoTo13}. 

On the other hand it appears that this phenomenon changes in higher dimension. It is still known to occur as witnessed by the polygon spaces of \cite{HaTo03} and described in Example 8.5 of \cite{Boy10a}. However, it has been shown recently \cite{ChMaSu10,ChPaSu12} that for $N=\bbc\bbp^p$ with $p>1$ the two ruled manifolds $S_n$ and $S_{n'}$ are diffeomorphic if and only if $|n'|=|n|$. Indeed,the diffeomorphism type is determined completely by its cohomology ring which takes the form
\begin{equation}\label{cohring}
H^*(S_n,\bbz)=\bbz[x_1,x_2]/\bigl(x_1^{p+1},(x_2(nx_1+x_2)\bigr)
\end{equation}
where $x_1,x_2$ have degree $2$. So the Hirzebruch-type phenomenon involving inequivalent complex structures on the same manifold does not generalize when $p>1$.

\subsection{The Admissible Construction}\label{admissible}
We will now assume that $n$ from Section \ref{ruledsec} is non-zero and $(N,\gro_N$)  defines a K\"ahler structure with CSC K\"ahler metric
$g_N$.
Then 
$(\omega_{N_n},g_{N_n}): =(2n\pi \omega_N, 2n\pi g_N)$ satisfies that 
$( g_{N_n}, 
\omega_{N_n})$ or $(- g_{N_n}, 
-\omega_{N_n})$
is a K\"ahler structure (depending on the sign of $n$). 
In either case, we let $(\pm g_{N_n}, \pm \omega_{N_n})$ refer to the K\"ahler structure.
We denote the real dimension of $N$ by
$2 d_{N}$ and write the scalar curvature of $\pm g_{N_n}$ as $\pm 2 d_{N_n} s_{N_n}$.
[So, if e.g. $-g_{N_n}$ is a K\"ahler structure with positive scalar curvature, $s_{N_n}$ would be negative.]

Now Lemma \ref{c1L} implies that
$c_{1}(L_n)= [\omega_{N_n}/2\pi]$.
Then, following \cite{ACGT08}, the total space of the projectivization
$S_n=\bbp(\BOne\oplus L_n)$ is called {\it admissible}.

On these manifolds, a particular type of K\"ahler metric on $S_n$,  also called
{\it admissible}, can now be constructed \cite{ACGT08}. We shall describe this construction in Section \ref{CSCconstruct} where we will use it to prove Theorems \ref{admjoincsc} and \ref{allextr} of the Introduction.
An admissible K\"ahler manifold is a special case of a K\"ahler manifold admitting a so-called Hamiltonian $2$-form \cite{ApCaGa06}. More specifically, the admissible metrics as described in section \ref{CSCconstruct} admit a Hamiltonian $2$-form of order one.

\begin{remark}\label{sNn}
In the special case where $(N,\gro_N$) is K\"ahler-Einstein with K\"ahler metric $g_N$ and Ricci form 
$\rho_N = 2\pi\cali_N \omega_N$, where $\cali_N$ denotes the Fano index, there is a simple relationship between the value of $s_{N_n}$ and the value of $n$. Since the (scale invariant) Ricci form is given by $\rho_N=s_{N_n}\omega_{N_n}$, it is easy to see that $s_{N_n}=\cali_N/n$.
For the general CSC case this will be more complicated and will need to be handled case by case. We do know that if we write
$s_{N_n} = p_n/n$, then $p_n \leq d_N +1$ (see Remark 1 in \cite{ACGT08}).
\end{remark}

\section{The $S^3_\bfw$-Join Construction}\label{thejoin}
The join construction was first introduced in \cite{BG00a} for Sasaki-Einstein manifolds, and later generalized to any quasi-regular Sasakian manifolds in \cite{BGO06} (see also Section 7.6.2 of \cite{BG05}). However, as pointed out in \cite{BoTo11} it is actually a construction involving the orbifold Boothby-Wang construction \cite{BoWa,BG00a}, and so applies to quasi-regular strict contact structures. Although it is quite natural to do so, we do not need to fix the transverse (almost) complex structure. Moreover, in \cite{BoTo13} it was shown that in the special case of $S^3$-bundles over Riemann surfaces a twisted transverse complex structure on a regular Sasakian manifold can be realized by a product transverse complex structure on a certain quasi-regular Sasakian structure in the same Sasaki cone. 

We consider a generalization of the join construction used in previous work \cite{BoTo11,BoTo13}. We refer to \cite{BGO06,BG05} for a thorough discussion of the join construction as well as the fundamentals of Sasakian geometry. Here we let $M$ be a regular Sasakian manifold with constant scalar curvature, and consider the join $M_{l_1,l_2,\bfw}=M\star_{l_1,l_2}S^3_\bfw$ with the weighted 3-sphere $S^3_\bfw$ (a sphere with a weighted circle action) where both $\bfw=(w_1,w_2)$ and $\bfl=(l_1,l_2)$ are pairs of relatively prime positive integers. We can assume that the weights $(w_1,w_2)$ are ordered, namely they satisfy $w_1\geq w_2$. Furthermore, $M_{l_1,l_2,\bfw}$ is a smooth manifold if and only if $\gcd(l_2,l_1w_1w_2)=1$ which is equivalent to $\gcd(l_2,w_i)=1$ for $i=1,2$. Henceforth, we shall assume these conditions. 

The join is constructed from the following  commutative diagram
\begin{equation}\label{s2comdia}
\begin{matrix}  M\times S^3_\bfw &&& \\
                          &\searrow\pi_L && \\
                          \decdnar{\pi_{2}} && M_{l_1,l_2,\bfw} &\\
                          &\swarrow\pi_1 && \\
                         N\times\bbc\bbp^1[\bfw] &&& 
\end{matrix}
\end{equation}
where the $\pi$s are the obvious projections. Here $M$ has a regular contact form $\eta_1$ with Reeb vector field $\xi_1$, and $S^3_\bfw$ has the weighted contact form $\eta_2$ with Reeb vector field $\xi_2=w_1H_1+w_2H_2$ where $H_i$ is the infinitesimal generators of the $S^1$ action on 
$$S^3=\{(z_1,z_2)\in\bbc^2~|~|z_1|^2+|z_2|^2=1\}$$ 
given by sending $z_i$ to $e^{i\theta}z_i$. The circle projection $\pi_L$ is generated by the vector field 
\begin{equation}\label{Lvec}
L_{l_1,l_2,\bfw}=\frac{1}{2l_1}\xi_1-\frac{1}{2l_2}\xi_2.
\end{equation}
Moreover, the 1-form $\eta_{l_1,l_2,\bfw}=l_1\eta_1+l_2\eta_2$ on $M\times S^3$ passes to the quotient $M_{l_1,l_2,\bfw}$ and gives it a contact structure. The Reeb vector field of $\eta_{l_1,l_2,\bfw}$ is the vector field
\begin{equation}\label{Reebjoin}
\xi_{l_1,l_2,\bfw}=\frac{1}{2l_1}\xi_1+\frac{1}{2l_2}\xi_2.
\end{equation}
The base orbifold $N\times \bbc\bbp^1[\bfw]$ has a natural K\"ahler structure, namely the product structure, and this induces a Sasakian structure $\cals_{l_1,l_2,\bfw}=(\xi_{l_1,l_2,\bfw},\eta_{l_1,l_2,\bfw},\Phi,g)$ on $M_{l_1,l_2,\bfw}$. The transverse complex structure $J=\Phi |_{\cald_{l_1,l_2,\bfw}}$ is the lift of  the product complex structure on $N\times \bbc\bbp^1[\bfw]$.

It follows from Proposition 7.6.7 of \cite{BG05} that the join $M_{l_1,l_2,\bfw}$ can also be realized as a fiber bundle over $N$ associated to the principal $S^1$-bundle $M\ra{1.6} N$ with fiber the lens space $L(l_2;l_1w_1,l_1w_2)$.
It is easy to see that the join of extremal (CSC) Sasaki metrics gives an extremal (CSC) Sasaki metric induced by the product extremal (CSC) K\"ahler metrics. Thus, since weighted projective spaces have extremal orbifold metrics, we can take the Sasakian structure $\cals_{l_1,l_2,\bfw}$ to be extremal. However, most of the CSC Sasaki metrics of interest in this work are not induced by the product of CSC K\"ahler metrics.

\subsection{The First Chern Class}
Let us compute the first Chern class of our induced contact structure $\cald_{l_1,l_2,\bfw}$ on $M\star_{l_1,l_2}S^{3}_\bfw$. The orbifold first Chern class of the base is 
\begin{equation}\label{c1N}
c_1^{orb}(N\times \bbc\bbp^1[\bfw])=c_1(N)+\frac{|\bfw|}{w_1w_2}PD(D)
\end{equation}
as an element of $H^2(N\times \bbc\bbp^1[\bfw],\bbq)\approx H^2(N,\bbq)\oplus H^2(\bbc\bbp^1[\bfw]),\bbq)$ where $D$ a divisor given by $z_1=0$ or $z_2=0$ and $PD$ denotes Poincar\'e dual. The K\"ahler form on $N\times \bbc\bbp^1[\bfw]$ is $\gro_{l_1,l_2}=l_1\gro_N+ l_2\gro_\bfw$ where $\gro_\bfw$ is the standard K\"ahler form on $\bbc\bbp^1[\bfw]$ which satisfies $[\gro_\bfw]=\frac{[\gro_0]}{w_1w_2}$ where $\gro_0$ is the standard volume form on $\bbc\bbp^1$. Note that $PD(D)=[\gro_0]$. Pulling $\gro_{l_1,l_2}$ back to the join $M_{l_1,l_2,\bfw}= M\star_{l_1,l_2}S^{3}_\bfw$ we have $\pi^*\gro_{l_1,l_2}=d\eta_{l_1,l_2,\bfw}$ implying that $l_1\pi^*[\gro_N]+l_2\pi^*[\gro_\bfw]=0$ in $H^2(M_{l_1,l_2,\bfw},\bbz)$. So taking $\pi^*[\gro_N]= l_2\grg$ and $\pi^*[\gro_\bfw]=-l_1\grg$ for some generator $\grg\in H^2(M_{l_1,l_2,\bfw},\bbz)$ we have
\begin{equation}\label{c1cald}
c_1(\cald_{l_1,l_2,\bfw})=\pi^*c_1(N)-l_1|\bfw|\grg.
\end{equation}
Taking the mod 2 reduction gives the second Stiefel-Whitney class of $M_{l_1,l_2,\bfw}$, viz.
\begin{equation}\label{w2M}
w_2(M_{l_1,l_2,\bfw})=\pi^*w_2(N)-\grr(l_1|\bfw|\grg)
\end{equation}
where $\grr$ is the reduction mod 2 map. This implies

\begin{corollary}\label{spincor}
If $l_1$ is even or if $w_i$ are both odd for $i=1,2$, then $M_{l_1,l_2,\bfw}$ is a spin manifold if and only if $N$ is a spin manifold. On the other hand if both $l_1$ and $|\bfw|$ are odd, then $M_{l_1,l_2,\bfw}$ is a spin manifold if and only if $N$ is not a spin manifold.
\end{corollary}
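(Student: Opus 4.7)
The plan is a parity analysis of identity (\ref{w2M}) combined with an injectivity argument for $\pi^*$ on mod-$2$ cohomology. Since $\gcd(w_1,w_2)=1$, the sum $|\bfw|=w_1+w_2$ is even precisely when both $w_i$ are odd; consequently $l_1|\bfw|$ is even exactly in the cases of the first sentence of the corollary and odd exactly in the case of the second. The key auxiliary claim to establish is that the pullback $\pi^*\colon H^2(N,\bbz_2)\to H^2(M_{l_1,l_2,\bfw},\bbz_2)$ is injective, which I would verify via the Leray--Serre spectral sequence of the lens-space bundle $\pi\colon M_{l_1,l_2,\bfw}\to N$ with fiber $L(l_2;l_1w_1,l_1w_2)$ (Proposition 7.6.7 of \cite{BG05}). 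The only differential that could obstruct injectivity is $d_2\colon H^0(N,H^1(F,\bbz_2))\to H^2(N,\bbz_2)$, and a case-by-case check using the smoothness constraint $\gcd(l_2,l_1w_1w_2)=1$ shows that either $H^1(F,\bbz_2)=0$ (because $l_2$ is odd) or this differential is forced to be trivial.

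In the first case ($l_1|\bfw|$ even), formula (\ref{w2M}) collapses to $w_2(M_{l_1,l_2,\bfw})=\pi^*w_2(N)$, and so the injectivity above gives immediately that $M_{l_1,l_2,\bfw}$ is spin iff $w_2(N)=0$, i.e.\ iff $N$ is spin. In the second case ($l_1$ and $|\bfw|$ both odd), the formula reads
$$w_2(M_{l_1,l_2,\bfw})\equiv\pi^*w_2(N)+\grr(\grg)\pmod 2.$$
Here the smoothness hypothesis together with $|\bfw|$ odd (which forces exactly one of $w_1,w_2$ to be even) makes $l_2$ odd, so reducing the defining relation $l_2\grg=\pi^*[\gro_N]$ modulo $2$ yields $\grr(\grg)=\pi^*\grr([\gro_N])$. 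Substituting gives $w_2(M_{l_1,l_2,\bfw})=\pi^*\bigl(w_2(N)+\grr([\gro_N])\bigr)$, which by injectivity vanishes exactly when $w_2(N)=\grr([\gro_N])$ in $H^2(N,\bbz_2)$.

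The main obstacle is identifying the condition ``$w_2(N)=\grr([\gro_N])$'' with ``$N$ is not spin.'' This is not automatic for a general Hodge manifold, but it holds under the mild constraint $w_2(N)\in\{0,\grr([\gro_N])\}\subset H^2(N,\bbz_2)$, which is satisfied in all geometric settings emphasized in the paper --- in particular whenever $c_1(N)$ is an integer multiple of the primitive class $[\gro_N]$ (for instance in the K\"ahler--Einstein case used in Theorem \ref{admjoinse}), since then $w_2(N)=\grr(c_1(N))$ lies in $\bbz_2\cdot\grr([\gro_N])$. Because $[\gro_N]$ is primitive one has $\grr([\gro_N])\neq 0$, and under this constraint the equality $w_2(N)=\grr([\gro_N])$ is equivalent to $w_2(N)\neq 0$, which completes the argument in the second case.
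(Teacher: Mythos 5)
Your parity bookkeeping is correct, and you have put your finger on exactly what the paper's one-line derivation of Corollary \ref{spincor} from Equation \eqref{w2M} leaves implicit: to pass from $w_2(M_{l_1,l_2,\bfw})=\pi^*w_2(N)$ (resp.\ $\pi^*\bigl(w_2(N)+\grr([\gro_N])\bigr)$) to a statement about $w_2(N)$ itself, one needs $\pi^*$ to be injective on $H^2(\,\cdot\,,\bbz_2)$. The problem is that your resolution of this point fails in the first case. When $l_2$ is even (which is allowed there: take $l_1$ odd, $w_1,w_2$ both odd, $\gcd(l_2,l_1w_1w_2)=1$), the fiber $F=L(l_2;l_1w_1,l_1w_2)$ has $H^1(F,\bbz_2)=\bbz_2$, and the differential $d_2\colon E_2^{0,1}\to E_2^{2,0}$ is forced to be \emph{injective}, not trivial: the join is simply connected (Proposition \ref{simcon}), so the class in $E_2^{0,1}$ must die, and $d_2$ is the only differential available to kill it. Equivalently, since $\pi^*[\gro_N]=l_2\grg$, reduction mod $2$ gives $\pi^*\grr([\gro_N])=0$ when $l_2$ is even, while $\grr([\gro_N])\neq 0$ because $[\gro_N]$ is primitive. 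A concrete counterexample to your auxiliary claim --- and in fact to the first sentence of the corollary as literally stated --- is $N=\bbc\bbp^2$, $l_1=1$, $l_2=2$, $\bfw=(1,1)$: by Example \ref{w1=w2} and Proposition 2.3 of \cite{WaZi90} the join is $S^2\times S^5$, which is spin, whereas $\bbc\bbp^2$ is not. So the first assertion is only correct as the statement $w_2(M_{l_1,l_2,\bfw})=\pi^*w_2(N)$, or under the additional hypothesis that $l_2$ is odd; no spectral-sequence argument can rescue the case $l_2$ even, and you should not paper over it with a claim that is false.

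The second half of your argument is sound: when $l_1|\bfw|$ is odd, exactly one $w_i$ is even, the smoothness condition $\gcd(l_2,w_1w_2)=1$ forces $l_2$ to be odd, hence $H^1(F,\bbz_2)=0$ and $\pi^*$ really is injective on $H^2(\,\cdot\,,\bbz_2)$, so $w_2(M_{l_1,l_2,\bfw})=\pi^*\bigl(w_2(N)+\grr([\gro_N])\bigr)$ vanishes iff $w_2(N)=\grr([\gro_N])$. Your closing caveat is also a genuine one: identifying this last condition with ``$N$ is not spin'' requires $w_2(N)\in\{0,\grr([\gro_N])\}$, which holds in the quasi-monotone settings the paper actually uses but not for an arbitrary Hodge manifold (e.g.\ $N=\bbc\bbp^1\times\bbc\bbp^2$ polarized by the primitive class $a+b$ has $w_2(N)=\grr(b)\notin\{0,\grr(a+b)\}$). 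The paper offers no argument for the corollary beyond ``this implies,'' so it addresses neither of these issues; your write-up correctly isolates them but then disposes of the first one incorrectly.
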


Equation \eqref{c1cald} reduces further in the special case that $[\gro_N]$ is monotone. Actually we are interested in a generalization. We say  that $[\gro_N]$ is {\it quasi-monotone} if  $c_1(N)=\cali_N[\gro_N]$ for some integer $\cali_N$. Here $\cali_N$ is the {\it Fano index} when $\cali_N$ is positive (the monotone case) and the {\it canonical index} when it is negative. We also allow the case $\cali_N=0$. So when $[\gro_N]$ is quasi-monotone we have
\begin{equation}\label{c1cald2}
c_1(\cald_{l_1,l_2,\bfw})=(l_2\cali_N -l_1|\bfw|)\grg.  
\end{equation}

Of particular interest is the cohomological Einstein condition. Let $c_1(\calf_\xi)$ be the basic first Chern class of the Sasakian structure $\cals_{l_1,l_2,\bfw}$, then the cohomological Einstein condition is $c_1(\calf_\xi)=a[d\eta_{l_1,l_2,\bfw}]_B$ for some positive constant $a$, where $[\cdot]_B$ denotes the basic cohomology class in $H^{1,1}(\calf_\xi)$. This implies that $c_1(\cald_{l_1,l_2,\bfw})$ is a torsion class, but for convenience we shall assume that $c_1(\cald_{l_1,l_2,\bfw})=0$ which implies the condition $l_2\cali_N=|\bfw|l_{1}$. We have arrived at:

\begin{lemma}\label{c10}
Necessary conditions for the Sasaki manifold $M_{l_1,l_2,\bfw}$ to admit a Sasaki-Einstein metric is that $\cali_N>0$, and that  
$$l_2(\bfw)=\frac{|\bfw|}{\gcd(|\bfw|,\cali_N)},\qquad l_{1}(\bfw)=\frac{\cali_N}{\gcd(|\bfw|,\cali_N)}.$$
\end{lemma}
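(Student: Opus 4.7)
The plan is to exploit the relation $l_2\cali_N = l_1|\bfw|$ obtained in the paragraph preceding the statement by combining the cohomological Einstein condition with equation \eqref{c1cald2}; essentially all that remains is a positivity remark and an elementary divisibility argument.

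First, I would dispatch the positivity of $\cali_N$. Since Sasaki--Einstein requires the normalized cohomological Einstein condition $c_1(\cald_{l_1,l_2,\bfw})=0$, equation \eqref{c1cald2} gives $\cali_N = l_1|\bfw|/l_2$, and the right-hand side is a ratio of positive integers, hence strictly positive. Implicit here is the quasi-monotone assumption on $[\gro_N]$; this is itself forced because the fiber/base decomposition of $H^2(N\times\bbc\bbp^1[\bfw],\bbq)$ via \eqref{c1N} together with the Einstein equation turns the horizontal component into the statement that $c_1(N)$ is a rational multiple of the primitive integer class $[\gro_N]$, and primitivity together with $\gcd(l_1,l_2)=1$ makes that multiple an integer.

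Second, I would solve the Diophantine equation $l_2\cali_N = l_1|\bfw|$ subject to $\gcd(l_1,l_2)=1$. Put $d = \gcd(|\bfw|,\cali_N)$, and write $\cali_N = d\alpha$ and $|\bfw| = d\beta$ with $\gcd(\alpha,\beta)=1$. The equation becomes $l_2\alpha = l_1\beta$; by Euclid's lemma, $\alpha\mid l_1$ and $\beta\mid l_2$, so $l_1 = k\alpha$ and $l_2 = k\beta$ for some positive integer $k$. Coprimality of $l_1$ and $l_2$ then forces $k=1$, yielding
\[
l_1 = \frac{\cali_N}{\gcd(|\bfw|,\cali_N)}, \qquad l_2 = \frac{|\bfw|}{\gcd(|\bfw|,\cali_N)},
\]
as claimed.

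There is no substantial obstacle here: the geometric content has already been packaged into \eqref{c1cald2}, and what remains is purely number-theoretic. The only point requiring a brief check is that quasi-monotonicity of $[\gro_N]$ is not an extra hypothesis but a consequence of the Einstein condition on the join, which follows from primitivity of $[\gro_N]$ together with the product orbifold structure of the base.
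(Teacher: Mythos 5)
Your proposal is correct and follows essentially the same route as the paper: the lemma is really just the conclusion of the discussion preceding it, where the cohomological Einstein condition $c_1(\cald_{l_1,l_2,\bfw})=0$ combined with \eqref{c1cald2} yields $l_2\cali_N=l_1|\bfw|$, from which positivity of $\cali_N$ and the stated formulas follow by exactly the coprimality argument you give. Your two additions --- spelling out the Euclid's-lemma step with $k=\gcd(l_1,l_2)=1$, and observing that quasi-monotonicity of $[\gro_N]$ is forced (via injectivity of $\pi^*$ on $H^2(N,\bbq)$ and primitivity of $[\gro_N]$) rather than assumed --- are both sound and merely make explicit what the paper leaves implicit.
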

The integers $l_1(\bfw),l_2(\bfw)$ in Lemma \ref{c10} are called {\it relative Fano indices} \cite{BG00a}.

\subsection{The Sasaki Cone}\label{sasconesec}
Since for any Sasakian structure $\cals$ the Reeb vector field lies in the center of the Lie algebra $\ga\gu\gt(\cals)$ of  the Sasaki automorphism group $\gA\gu\gt(\cals)$, it follows from the join construction that the Lie algebra $\ga\gu\gt(\cals_{l_1,l_2,\bfw})$ of the automorphism group of the join satisfies $\ga\gu\gt(\cals_{l_1,l_2,\bfw})=\ga\gu\gt(\cals_1)\oplus\ga\gu\gt(\cals_{\bfw})\mod (L_{l_1,l_2,\bfw})$ where $\cals_1$ is the Sasakian structure on $M$, and $\cals_\bfw$ is the Sasakian structure on $S^3_\bfw$. Now the unreduced Sasaki cone \cite{BGS06} $\gt^+$ of $\cals=(\xi,\eta,\Phi,g)$ is by definition the positive cone in the Lie algebra $\gt$ of a maximal torus in $\gA\gu\gt(\cals)$, i.e.
\begin{equation}\label{sascone}
\gt^+=\{X\in\gt~|~\eta(X)>0\}.
\end{equation}
Thus, the Sasaki cone $\gt^+_{l_1,l_2,\bfw}$ of the join $M_{l_1,l_2,\bfw}$ satisfies
\begin{equation}\label{sasconejoin}
\gt^+_{l_1,l_2,\bfw}=\{X\in \gt_{l_1,l_2,\bfw} ~|~\eta_{l_1,l_2,\bfw}(X)>0\}=\gt^+_1+\gt^+_\bfw \mod (L_{l_1,l_2,\bfw}).
\end{equation}
If the Lie algebra of a maximal torus of the automorphism group of $\cals_1$ has dimension $k$, then $\dim \gt^+_{l_1,l_2,\bfw}=k+2$, since the $\gt_\bfw$ has dimension $2$. However, in this paper we are mainly concerned with the 2-dimensional subcone $\gt^+_\bfw$, which we call the $\bfw$-Sasaki cone, of the full Sasaki cone $\gt^+_{l_1,l_2,\bfw}$. The $\bfw$-Sasaki cone $\gt_\bfw^+$ can be identified with the first quadrant in $\bbr^2$ with coordinates $v_1,v_2$ for all $\bfw$, viz.
\begin{equation}\label{tw+}
\gt_\bfw^+ =\{(v_1,v_2)\in\bbr^2~|~v_1,v_2>0\}.
\end{equation}

We are also interested in the full reduced Sasaki cone $\grk$ which is $\gt^+/\calw$ where $\calw$ is the Weyl group of the Sasaki automorphism group $\gA\gu\gt(\cals)$. One can think of $\grk$ as the moduli space of Sasakian structures with a fixed underlying CR structure $(\cald,J)$.

\subsection{The Tori Actions}
Consider the action of the $3$-dimensional torus $T^{3}$ on the product $M\times S^{3}_\bfw$ defined by
\begin{equation}\label{Taction}
(x,u;z_1,z_2)\mapsto (x,e^{il_2\theta}u;e^{i(\phi_1-l_1w_1\theta)}z_1,e^{i(\phi_2-l_1w_2\theta)}z_2).
\end{equation}
The Lie algebra $\gt_{3}$ of $T^{3}$ is generated by the vector fields $L_{l_1,l_2,\bfw},H_1,H_2$. Then the join $M_{l_1,l_2,\bfw}=M\star_{l_1,l_2}S^3_\bfw$ defined in the beginning of Section \ref{thejoin} is the quotient of $M\times S^{3}_\bfw$ by the $S^1$ subgroup of $T^3$ defined by setting $\phi_1=\phi_2=0$. Alternatively it is the fiber bundle 
$$M_{l_1,l_2,\bfw}=M\times_{S^1}L(l_2;l_1w_1,l_1w_2)$$ 
over the K\"ahler manifold $N$ associated to the principal $S^1$-bundle $M\ra{1.5} N$ with fiber the lens space $L(l_2;l_1w_1,l_1w_2)$. The $S^1$ action on the lens space is accomplished in two stages. First, represent $L(l_2;l_1w_1,l_1w_2)$ as a $\bbz_{l_2}$ quotient of $S^3_\bfw$, then the residual $S^1_\theta/\bbz_{l_2}\approx S^1$ action is 
\begin{equation}\label{resS1act}
(x,u;z_1,z_2)\mapsto (x,e^{i\theta}u;[e^{-i\frac{l_1w_1}{l_2}\theta)}z_1,e^{-i\frac{l_1w_2}{l_2}\theta)}z_2]).
\end{equation}
The brackets in Equation (\ref{resS1act}) denote the equivalence class defined by $(z'_1,z'_2)\sim (z_1,z_2)$ if $(z'_1,z'_2)=(\grl^{l_1w_1}z_1,\grl^{l_1w_2}z_2)$ for $\grl^{l_2}=1$.

Next consider the $T^2$ action of $S^1_\phi\times (S^1_\theta/\bbz_{l_2})$ on $M\times L(l_2;l_1w_1,l_1w_2)$ given by
\begin{equation}\label{T2action3}
(x,u;z_1,z_2)\mapsto (x,e^{i\theta}u;[e^{i(v_1\phi-\frac{l_1w_1}{l_2}\theta)}z_1,e^{i(v_2\phi-\frac{l_1w_2}{l_2}\theta)}z_2]),
\end{equation}
This gives rise to the commutative diagram 
\begin{equation}\label{comdia2}
\begin{matrix}  M\times L(l_2;l_1w_1,l_1w_2) &&& \\
                          &\searrow\pi_L && \\
                          \decdnar{\pi_B} && M_{l_1,l_2,\bfw} &\\
                          &\swarrow\pi_\bfv && \\
                          B_{l_1,l_2,\bfv,\bfw} &&& 
\end{matrix}
\end{equation}
where $B_{l_1,l_2,\bfv,\bfw}$ is a bundle over $N$ with fiber a weighted projective space, and $\pi_B$ denotes the quotient projection by $T^2$. The Lie algebra of this $T^2$ is generated by 
\begin{equation}\label{infq+2action}
L_\bfw=\frac{1}{2l_1}\xi_M-\sum_{j=0}^{q}\frac{1}{2l_2}w_jH_j, \qquad \xi_\bfv=\sum_jv_jH_j,
\end{equation}
where $\xi_M$ denotes the Reeb vector field of the regular Sasakian structure on $M$.
Note that $\xi_\bfv$ is a Reeb vector field in the $\bfw$-Sasaki cone of $M_{l_1,l_2,\bfw}$.

Let us analyze the behavior of the $T^2$ action given by Equation (\ref{T2action3}). We shall see that it it not generally effective. First we notice that the $S^1_\theta$ action is free since it is free on the first factor. Next we look for fixed points under a subgroup of the circle $S^1_\phi$. Thus, we impose 
$$(e^{iv_1\phi}z_1,e^{iv_2\phi}z_2)=(e^{-2\pi\frac{l_1w_1}{l_2}ri}z_1,e^{-2\pi\frac{l_1w_2}{l_2}ri}z_2)$$
for some $r=0,\ldots,l_2-1$. If $z_1z_2\neq 0$ we must have
\begin{equation}\label{phisoln}
v_1\phi=2\pi(-\frac{l_1w_1r}{l_2} +k_1),\qquad v_2\phi=2\pi(-\frac{l_1w_2r}{l_2} +k_2)
\end{equation}
for some integers $k_1,k_2$ which in turn implies
$$l_1r(w_2v_1-w_1v_2)=l_2(k_2v_1-k_1v_2).$$
This gives
\begin{equation}\label{reqn}
r=\frac{l_2}{l_1}\frac{k_2v_1-k_1v_2}{w_2v_1-w_1v_2}
\end{equation}
which must be a nonnegative integer less than $l_2$. 
We can also solve Equations (\ref{phisoln}) for $\phi$ by eliminating $\frac{l_1r}{l_2}$ giving
\begin{equation}\label{phisoln2}
\phi =2\pi \frac{k_1w_2-k_2w_1}{w_2v_1-w_1v_2}.
\end{equation}

Next we write (\ref{reqn}) as
\begin{equation}\label{reqn3}
r=\Bigl(\frac{l_2}{\gcd(|w_2v_1-w_1v_2|,l_2)}\Bigr)\Bigl(\frac{k_2v_1-k_1v_2}{l_1\frac{w_2v_1-w_1v_2}{\gcd(|w_2v_1-w_1v_2|,l_2)}}\Bigr)
\end{equation}
Since $v_1$ and $v_2$ are relatively prime, we can choose $k_1$ and $k_2$ so that the term in the last parentheses is $1$. This determines $r$ as 
\begin{equation}\label{reqn4}
r=\frac{l_2}{\gcd(|w_2v_1-w_1v_2|,l_2)}
\end{equation}

Now suppose that $z_2=0$. Then generally we have $e^{iv_1\phi}=e^{-2\pi\frac{l_1w_1}{l_2}ri}$ for some $r=0,\ldots,l_2-1$ or equivalently $r=1,\ldots,l_2$. This gives 
\begin{equation}\label{z20}
\phi=2\pi(-\frac{l_1w_1r}{v_1l_2}+\frac{k}{v_1}).
\end{equation} 
A similar computation at $z_1=0$ gives 
\begin{equation}\label{z10}
\phi=2\pi(-\frac{l_1w_2r'}{v_2l_2}+\frac{k'}{v_2}).
\end{equation}
We are interested in when regularity can occur. For this we need the minimal angle at the two endpoints to be equal. This gives
$$-\frac{l_1w_2r'}{v_2l_2}+\frac{k'}{v_2}=-\frac{l_1w_1r}{v_1l_2}+\frac{k}{v_1}$$
for some choice of integers $k,k'$ and nonnegative integers $r,r'<l_2$. This gives
\begin{equation}\label{endpteqn}
\frac{-l_1w_2r'+k'l_2}{v_2}=\frac{-l_1w_1r+kl_2}{v_1}.
\end{equation}

\subsection{Periods of Reeb Orbits}
We assume that $\bfw\neq (1,1)$. We want to determine the periods of the orbits of the flow of the Reeb vector field defined by the weight vector $\bfv=(v_1,v_2)$. In particular, we want to know when there is a regular Reeb vector field in the $\bfw$-Sasaki cone.

Let us now generally determine the minimal angle, hence the generic period of the Reeb orbits, on the dense open subset $Z$ defined by $z_1z_2\neq 0$. For convenience we set $s=\gcd(|w_2v_1-w_1v_2|,l_2)$ in which case \eqref{reqn4} becomes $r=l_2/s$.

\begin{lemma}\label{generic3}
The minimal angle on $Z$ is $\frac{2\pi}{s}$.  Thus,  $S^1_\phi/\bbz_s$ acts freely on the dense open subset $Z$. 
\end{lemma}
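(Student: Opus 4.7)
The plan is to enumerate the stabilizer subgroup of the $S^1_\phi$-action (equivalently, of the $\xi_\bfv$-flow) at a generic point of $Z$, showing it has order exactly $s$, hence equals $\bbz_s \subset S^1$, from which both claims follow.

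First, I would identify the stabilizer condition on $M_{l_1,l_2,\bfw}$. A point $[x,u,[z_1,z_2]] \in Z$ is fixed by $\phi \in [0,2\pi)$ iff there exists $\theta$ with $(x,u,[e^{iv_1\phi}z_1,e^{iv_2\phi}z_2]) \sim (x, e^{i\theta}u, [e^{-il_1 w_1 \theta/l_2}z_1, e^{-il_1 w_2\theta/l_2}z_2])$ under the relation defining the $\pi_L$-quotient in diagram \eqref{comdia2}. Comparing the $u$-coordinate (a point in the principal $S^1$-bundle $M \to N$) forces $e^{i\theta}=1$, so only $\theta = 0$ contributes. The remaining lens space equation, on $z_1 z_2 \neq 0$, gives exactly the system \eqref{phisoln} of the text for some $r \in \{0,1,\dots,l_2-1\}$ and integers $k_1,k_2$.

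Second, I would determine the set of admissible $r$. Eliminating $\phi$ from \eqref{phisoln} yields $\tfrac{l_1 r}{l_2}(w_2 v_1 - w_1 v_2) = k_2 v_1 - k_1 v_2$. Since $\gcd(v_1,v_2)=1$, by Bezout the right-hand side realizes every integer, so the only obstruction is that the left-hand side be integral. Because $\gcd(l_1,l_2)=1$ (which follows from the standing assumption $\gcd(l_2, l_1 w_1 w_2)=1$), this reduces to $l_2 \mid r(w_2 v_1 - w_1 v_2)$. Writing $s = \gcd(|w_2 v_1 - w_1 v_2|, l_2)$ and $W = (w_2 v_1 - w_1 v_2)/s$, we have $\gcd(l_2/s, W)=1$, so the condition becomes $(l_2/s) \mid r$. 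Thus $r \in \{0, l_2/s, 2l_2/s, \dots, (s-1)l_2/s\}$, giving precisely $s$ choices (this is \eqref{reqn4}).

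Third, I would show these yield $s$ distinct stabilizer elements. For each admissible $r$, varying $(k_1,k_2)$ shifts $\phi$ by integer multiples of $2\pi$, so there is a unique $\phi_r \in [0,2\pi)$. If $\phi_r = \phi_{r'}$ with $r,r'$ admissible, then from \eqref{phisoln} one gets $l_1 w_1 (r-r')/l_2 \in \bbz$; combined with $\gcd(l_1 w_1, l_2)=1$ this forces $l_2 \mid r-r'$, and $|r-r'|<l_2$ gives $r=r'$. Hence the stabilizer is a subgroup of $S^1$ of order exactly $s$, namely $\bbz_s$, whose minimal positive angle is $2\pi/s$. Since the entire derivation used only $z_1 z_2 \neq 0$ (never the particular values), the stabilizer is constant on $Z$, and the induced $S^1_\phi/\bbz_s$-action is free there.

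The step requiring the most care is the last one: one must verify that the $s$ admissible values of $r$ produce genuinely distinct elements of $S^1$ rather than collapsing — and this is precisely where the coprimality hypotheses $\gcd(l_2, l_1 w_i) = 1$ enter essentially. Everything else is essentially a bookkeeping consequence of the computation already carried out in \eqref{phisoln}--\eqref{reqn4}.
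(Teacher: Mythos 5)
Your proof is correct and is built on the same fixed-point system \eqref{phisoln}--\eqref{reqn4} that the paper manipulates, but you organize it as a complete enumeration of the stabilizer subgroup, and that is a genuine gain in rigor rather than just a stylistic change. The paper's proof picks the single value $r=l_2/s$, solves $(sk_2-l_1w_2)v_1=(sk_1-l_1w_1)v_2$ to get $sk_i=l_1w_i+mv_i$, substitutes into \eqref{phisoln2} to obtain $\phi=2\pi m/s$, and concludes at once that the minimal angle is $2\pi/s$. But for that fixed $r$ the integer $m$ is in fact constrained to a single residue class modulo $s$, so the paper's computation by itself produces only one stabilizer element and does not visibly show that the angle $2\pi/s$ itself is attained (one would still have to observe that this $m$ is coprime to $s$ and that the stabilizer is a group). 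Your three steps supply exactly the missing bookkeeping: the admissible $r$ are precisely the $s$ multiples of $l_2/s$ (this is where $\gcd(l_2,l_1w_1w_2)=1$ and Bezout for $(v_1,v_2)$ enter), each admissible $r$ determines a unique $\phi_r\in S^1$, and distinct $r$ give distinct $\phi_r$ because $\gcd(l_1w_1,l_2)=1$; hence the stabilizer has order exactly $s$, is therefore $\bbz_s$, and both assertions of the lemma follow in the precise form used later in Proposition \ref{regularprop}. Two tiny remarks: your parenthetical ``(this is \eqref{reqn4})'' attached to the full list $\{0,l_2/s,\dots,(s-1)l_2/s\}$ is slightly off, since \eqref{reqn4} records only the generator $r=l_2/s$; and, like the paper, you implicitly assume $w_2v_1-w_1v_2\neq 0$, i.e.\ $\bfv\neq\bfw$, which is the only degenerate case.
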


\begin{proof}
We choose $k_1,k_2$ in Equation (\ref{reqn3}) so that the last parentheses equals $1$. This gives
$$l_1\frac{w_2v_1-w_1v_2}{s}=k_2v_1-k_1v_2.$$
Rearranging this becomes
$$(sk_2-l_1w_2)v_1=(sk_1-l_1w_1)v_2.$$
Since $v_1$ and $v_2$ are relatively prime this equation implies $sk_i=l_1w_i+mv_i$ for $i=1,2$ and some integer $m$. Putting this into Equation (\ref{phisoln2}) gives $\phi=\frac{2\pi m}{s}$, so the minimal angle is $\frac{2\pi}{s}$.
\end{proof}

We now investigate the endpoints defined by $z_2=0$ and $z_1=0$.

\begin{proposition}\label{regularprop}
The following hold:
\begin{enumerate}
\item The period on $Z$, namely $\frac{2\pi}{s}$, is an integral multiple of the periods at the endpoints. Hence, $S^1_\phi/\bbz_s$ acts effectively on $M_{l_1,l_2,\bfw}$.
\item The period at the endpoint $z_j=0$ is $\frac{2\pi}{v_il_2}$ where $i\equiv j+1\mod 2$. So the end points have equal periods if and only if $\bfv=(1,1)$. 
\item The $\bfw$-Sasaki cone contains a regular Reeb vector field if and only if  $l_2$ divides $w_1-w_2$, and in this case it is given by $\bfv=(1,1)$.
\end{enumerate}
\end{proposition}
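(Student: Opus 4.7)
The plan is to analyze each endpoint separately, compute the minimal angle exactly using the smoothness hypothesis, and then compare with the generic period already obtained in Lemma~\ref{generic3}.

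First I would compute the period at $z_2=0$. From Equation~\eqref{z20} the $\phi$ that fixes this stratum must satisfy $\phi v_1 l_2 = 2\pi(-l_1 w_1 r + k l_2)$ for integers $r,k$, so the set of admissible $\phi$ is $\frac{2\pi}{v_1 l_2}\bbz\cdot\gcd(l_1 w_1, l_2)$. The smoothness assumption $\gcd(l_2,l_1 w_1 w_2)=1$ (hence $\gcd(l_1 w_1, l_2)=1$) collapses this to $\frac{2\pi}{v_1 l_2}\bbz$, so the period there is $2\pi/(v_1 l_2)$. The identical argument at $z_1=0$ (using Equation~\eqref{z10} and $\gcd(l_1 w_2, l_2)=1$) gives period $2\pi/(v_2 l_2)$. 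This proves (2); equal periods at the two endpoints force $v_1=v_2$, and since $\gcd(v_1,v_2)=1$ this gives $\bfv=(1,1)$.

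For (1), I would observe that since $s=\gcd(|w_2 v_1 - w_1 v_2|,l_2)$ divides $l_2$, the ratio $v_i l_2/s$ is a positive integer, so $2\pi/s$ is an integer multiple of each endpoint period $2\pi/(v_i l_2)$. This shows that once we quotient $S^1_\phi$ by the kernel $\bbz_s$ of the generic action (Lemma~\ref{generic3}), the residual circle $S^1_\phi/\bbz_s$ still acts faithfully at the endpoints and hence effectively on $M_{l_1,l_2,\bfw}$.

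For (3), regularity of $\xi_\bfv$ requires that all orbits have the same period, i.e.\ $2\pi/s = 2\pi/(v_i l_2)$ for both $i=1,2$. By (2) this forces $\bfv=(1,1)$, and with this choice $s=\gcd(w_1-w_2,l_2)$, so the regularity condition $v_1 l_2 = s$ becomes $l_2=\gcd(w_1-w_2,l_2)$, equivalently $l_2 \mid w_1-w_2$. Conversely, if $l_2\mid w_1-w_2$, then $\bfv=(1,1)$ yields $s=l_2$ and all three periods agree, giving a regular Reeb vector field.

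The only real obstacle is bookkeeping: one must carefully use the smoothness hypothesis $\gcd(l_2,l_1 w_i)=1$ to evaluate the gcd controlling the endpoint period — without it, the endpoint periods would be larger than $2\pi/(v_i l_2)$ and parts (2) and (3) could fail. Everything else is an arithmetic comparison using the formula for $s$ from Lemma~\ref{generic3}.
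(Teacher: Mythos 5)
Your argument is correct and follows essentially the same route as the paper's proof: you extract the endpoint periods $2\pi/(v_il_2)$ from Equations \eqref{z20}--\eqref{z10} via the smoothness condition $\gcd(l_2,l_1w_i)=1$, compare them with the generic period $2\pi/s$ from Lemma \ref{generic3} using $s\mid l_2$, and reduce regularity to $\bfv=(1,1)$ together with $l_2=\gcd(w_1-w_2,l_2)$. The only cosmetic point is that in (1) the quotient circle $S^1_\phi/\bbz_s$ is not faithful \emph{at} the endpoints (the stabilizers there are $\bbz_{v_il_2/s}$); effectiveness follows because the action is free on the dense set $Z$, which is what your divisibility observation actually establishes.
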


\begin{proof}
A Reeb vector field will be regular if and only if the period of its orbit is the same at all points. We know that it is $\frac{2\pi}{s}$ on $Z$. We need to determine the minimal angle at the endpoints. From Equation (\ref{z20}) the angle at $z_2=0$ is
$$\phi=2\pi(\frac{-l_1w_1r+kl_2}{v_1l_2}). $$
Now $\gcd(l_2,l_1w_1)=1,$
so we can choose $k$ and $r$ such that numerator of the term in the large parentheses is $1$. This gives period $\frac{2\pi}{v_1l_2}$. Similarly, at $z_1=0$ we have the period $\frac{2\pi}{v_2l_2}$. So the period is the same at the endpoints if and only if $v_1=v_2$ which is equivalent to $\bfv=(1,1)$ since $v_1$ and $v_2$  are relatively prime which proves $(2)$. 

Moreover, the period is the same at all points if and only if
\begin{equation}\label{eqper}
\bfv=(1,1),\qquad l_2=s=\gcd(|w_2v_1-w_1v_2|,l_2).
\end{equation} 
But the last equation holds if and only if $l_2$ divides $w_1-w_2$ proving $(3)$. 

(1) follows from the fact that for each $i=1,2$, $v_il_2$ is an integral multiple of $\gcd(|w_2v_1-w_1v_2|,l_2)=s$.
\end{proof}

In contrast to the 2-dimensional Sasaki cones in \cite{BoTo13}, not every $\bfw$-Sasaki cone has a regular Reeb vector field. Nevertheless, it does have a special Reeb vector field, namely that given by $\bfv=(1,1)$. For this there can be, as usual, two branch divisors, but they have the same ramification index, namely $m=l_2/s$. We refer to this Reeb field as {\it almost regular}. Clearly, there is precisely one almost regular Reeb vector field in each $\bfw$-Sasaki cone of $M_{l_1,l_2,\bfw}$.

\begin{example}\label{l_2=1} Regular Reeb vector fields. As stated in (c) of Proposition \ref{regularprop} when $l_2$ divides $w_1-w_2$ we always have a regular Reeb vector field in the $\bfw$-Sasaki cone by taking $\bfv=(1,1)$. (This was the case in \cite{BoTo13} where $l_2=1$.) We obtain $M_{l_1,l_2,\bfw}$ as a principle $S^1$ bundle over the smooth quotient $B_{l_1,l_2,1,\bfw}=S_n=\bbp(\BOne\oplus L_n)$ with $n=l_1\frac{w_1-w_2}{l_2}$. 
\end{example}

\subsection{$B_{l_1,l_2,\bfv,\bfw}$ as a Log Pair}
We follow the analysis in Section 3 of \cite{BoTo13}. We have the action of the 2-torus $S^1_\phi/\bbz_s\times (S^1_\theta/\bbz_{l_2})$ on $M\times L(l_2;l_1w_1,l_1w_2)$ given by Equation (\ref{T2action3}), and denoted by $\cala_{\bfv,l,\bfw}$, whose quotient space is $B_{l_1,l_2,\bfv,\bfw}$. It follows from Equation (\ref{T2action3}) that $B_{l_1,l_2,\bfv,\bfw}$ is a bundle over $N$ with fiber a weighted projective space of complex dimension one. By (1) of Proposition \ref{regularprop} the generic period is an integral multiple, say $m_i$, of the period at the divisor $D_i$. Thus, for $i=1,2$ we have 
\begin{equation}\label{ramind}
m_i=v_i\frac{l_2}{s}=v_im.
\end{equation}
Note that from its definition $m=\frac{l_2}{s}$, so $m_i$ is indeed a positive integer. It is the ramification index of the branch divisor $D_i$. We think of $D_1$ as the zero section and $D_2$ as the infinity section of the bundle $B_{l_1,l_2,\bfv,\bfw}$. Thus, $B_{l_1,l_2,\bfv,\bfw}$ is a fiber bundle over $N$ with fiber $\bbc\bbp^1[v_1,v_2]/\bbz_m\approx \bbc\bbp^1$. The isomorphism is simply $[z_1,z_2]\mapsto [z_1^{m_2},z_2^{m_1}]$ where the brackets denote the obvious equivalence classes on $\bbc\bbp^1[v_1,v_2]/\bbz_m$. The complex structure of $B_{l_1,l_2,\bfv,\bfw}$ is the projection of the transverse complex structure on $M_{l_1,l_2,\bfw}$ which in turn is the lift of the product complex structure on $N\times \bbc\bbp^1[\bfw]$. However, $B_{l_1,l_2,\bfv,\bfw}$ is not generally a product as a complex orbifold, nor even topologically.

Now we can follow the analysis leading to Lemma 3.14 of \cite{BoTo13}. So we define the map 
$$\th_\bfv:M\times L(l_2;l_1w_1,l_1w_2)\ra{1.6} M\times L(l_2;l_1w_1v_2,l_1w_2v_1)$$ 
by
\begin{equation}\label{th}
\th_\bfv(x,u;[z_1,z_2])=(x,u;[z_1^{m_2},z_2^{m_1}]).
\end{equation}
It is a $mv_1v_2$-fold covering map. Similar to \cite{BoTo13} we get a commutative diagram:
\begin{equation}\label{actcomdia}
\begin{matrix}
M\times L(l_2;l_1w_1,l_1w_2) &\fract{\cala_{\bfv,l,\bfw}(\grl,\grt)}{\ra{2.5}} & M\times  L(l_2;l_1w_1,l_1w_2) \\
\decdnar{\th_\bfv} && \decdnar{\th_\bfv} \\
M\times  L(l_2;l_1w'_1,l_1w'_2) & \fract{\cala_{(1,1),l,\bfw'}(\grl,\grt^{mv_1v_2})}{\ra{2.5}} & M\times  L(l_2;l_1w'_1,l_1w'_2),
\end{matrix}
\end{equation}
where $\bfw'=(v_2w_1,v_1w_2)$ and $\grt=e^{i\phi},\grl=e^{i\theta}$. So $B_{l_1,l_2,\bfv,\bfw}$ is the log pair $(B_{l_1,l_2,1,\bfw'},\grD)$ with 
branch divisor 
\begin{equation}\label{branchdiv}
\grD=(1-\frac{1}{m_1})D_1+ (1-\frac{1}{m_2})D_2,
\end{equation}
where $B_{l_1,l_2,1,\bfw'}$ is a $\bbc\bbp^1$-bundle over $N$. Now $B_{l_1,l_2,\bfv,\bfw}$ is the quotient $\bigl(M\times L(l_2;l_1w_1,l_1w_2)\bigr)/\cala_{\bfv,l,\bfw}(\grl,\grt)$, and $B_{l_1,l_2,1,\bfw'}$ is the quotient $\bigl(M\times  L(l_2;l_1w'_1,l_1w'_2)\bigr)/\cala_{(1,1),l,\bfw'}(\grl,\grt^{mv_1v_2})$. So $\th_\bfv$ induces a map $h_\bfv:B_{l_1,l_2,\bfv,\bfw}\ra{1.6}B_{l_1,l_2,1,\bfw'}$ defined by 
\begin{equation}\label{hquot}
h_\bfv([x,u;[z_1,z_2]])=[x,u;[z_1^{m_2},z_2^{m_1}]],
\end{equation}
where the outer brackets denote the equivalence class with respect to the corresponding $T^2$ action. We have

\begin{lemma}\label{biholo}
The map $h_\bfv:B_{l_1,l_2,\bfv,\bfw}\ra{1.6}B_{l_1,l_2,1,\bfw'}$ defined by Equation (\ref{hquot}) is a biholomorphism.
\end{lemma}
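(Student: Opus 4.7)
The map $h_\bfv$ is well-defined on the quotients, since the commutative diagram \eqref{actcomdia} shows that $\th_\bfv$ carries $\cala_{\bfv,l,\bfw}$-orbits into $\cala_{(1,1),l,\bfw'}$-orbits. It is holomorphic, since $\th_\bfv$ is holomorphic (polynomial in the lens-space coordinates and the identity on $M$ and $u$) and the two quotient projections are holomorphic. Moreover $h_\bfv$ is fiber-preserving with respect to the natural projections $B_{l_1,l_2,\bfv,\bfw}\ra{1.5} N$ and $B_{l_1,l_2,1,\bfw'}\ra{1.5} N$, because $\th_\bfv$ acts as the identity on the $N$-factor.

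The plan is to reduce the claim to a fiberwise statement. Both $B_{l_1,l_2,\bfv,\bfw}$ and $B_{l_1,l_2,1,\bfw'}$ are holomorphic bundles over $N$, with fibers $\bbc\bbp^1[v_1,v_2]/\bbz_m$ and $\bbc\bbp^1$ respectively, as identified in the discussion immediately preceding the lemma. Under these identifications the fiberwise restriction of $h_\bfv$ is, by the very definition of $\th_\bfv$, the map $[z_1,z_2]\mapsto [z_1^{m_2},z_2^{m_1}]$; this is precisely the standard biholomorphism $\bbc\bbp^1[v_1,v_2]/\bbz_m \ra{1.5} \bbc\bbp^1$ already recorded in the text. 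A fiber-preserving holomorphic map between two holomorphic $\bbc\bbp^1$-bundles over $N$ that restricts to a biholomorphism on each fiber is proper, holomorphic and bijective, hence is a global biholomorphism by the holomorphic inverse function theorem. This would yield the lemma.

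The main obstacle I expect lies in rigorously justifying the identification of the fiber of $B_{l_1,l_2,\bfv,\bfw}$ with $\bbc\bbp^1[v_1,v_2]/\bbz_m$, together with the correct form of the residual $\bbz_m$-action. This requires tracking three interlocking discrete symmetries on each fiber: the deck transformations of $\th_\bfv$ (multiplication of $z_1,z_2$ by roots of unity of orders dividing $m_2$ and $m_1$), the lens-space equivalence $\bbz_{l_2}$, and the discrete stabilizer inside the $T^2$-action $\cala_{\bfv,l,\bfw}$ at a generic point of the fiber. The coprimality hypotheses $\gcd(v_1,v_2)=1$ and $\gcd(l_2,l_1w_i)=1$ are precisely what make the bookkeeping close up, so that the resulting quotient on each fiber is exactly $\bbc\bbp^1[v_1,v_2]/\bbz_m$ with the expected $\bbz_m$ acting. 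Once that identification is in hand, the fiberwise biholomorphism assertion is immediate from the explicit formula for $\th_\bfv$, and the bundle-level conclusion follows as above.
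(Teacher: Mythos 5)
Your proposal is correct and follows essentially the same route as the paper: the paper's proof likewise observes that $h_\bfv$ is ostensibly holomorphic, that $\th_\bfv$ is the identity on $M$ and an $mv_1v_2$-fold cover on the lens spaces, and concludes from the commutative diagram (\ref{actcomdia}) that $h_\bfv$ is fiber preserving and a bijection on fibers with holomorphic inverse. The fiber identification $\bbc\bbp^1[v_1,v_2]/\bbz_m\approx\bbc\bbp^1$ that you flag as the delicate point is exactly what the paper establishes in the discussion immediately preceding the lemma, so your argument closes up in the same way.
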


\begin{proof}
The map is ostensibly holomorphic. Now $\th_\bfv$ is the identity map on $M$ and a $mv_1v_2$-fold covering map on the corresponding lens spaces. From the commutative diagram (\ref{actcomdia}) the induced map $h_\bfv$ is fiber preserving and is a bijection on the fibers with holomorphic inverse.
\end{proof}

\begin{remark}\label{Galcov}
It is well known that a weighted projective line $\bbc\bbp^1[w_1,w_2]$ is biholomorphic to the projective line itself $\bbc\bbp^1$. Similarly, developable orbifolds of the form $\bbc\bbp^1/G$ are biholomorphic to $\bbc\bbp^1$ for any finite reflection group $G\subset \gA\gu\gt(\bbc\bbp^1)$. In the case of our ruled manifolds this gives rise to Galois covers of log pairs
$$(S_n,(1-\frac{1}{m_1})D_1+(1-\frac{1}{m_2}D_2))\ra{1.8}(S_n,(1-\frac{1}{m})(D_1+D_2))\ra{1.8} (S_n,\emptyset).$$
Set theoretically the maps are the identity maps with the identity Galois group. However, they are inequivalent as orbifolds. For further discussion of this approach see \cite{GhKo05}. Note also that generally the trivial orbifold $(S_n,\emptyset)$ does not occur as one of our quotients. 
\end{remark}

Lemma \ref{biholo} allows us to consider the orbifold $B_{l_1,l_2,\bfv,\bfw}$ as the log pair $(B_{l_1,l_2,1,\bfw'},\grD)$ where $\grD$ is given by Equation \eqref{branchdiv}. Notice, as mentioned above, when $\bfv=(1,1)$ we have an almost regular Reeb vector field. Here the orbifold structure can be non-trivial, namely, $B_{l_1,l_2,(1,1),\bfw}=(B_{l_1,l_2,1,\bfw'},\grD)$ where $m_1=m_2=m=\frac{l_2}{s}$ and the branch divisor is given by
$$\grD=(1-\frac{1}{m})(D_1+D_2).$$

The $T^2$ action $\cala_{(1,1),l,\bfw'}:M\times  L(l_2;l_1w'_1,l_1w'_2)\ra{1.5} M\times  L(l_2;l_1w'_1,l_1w'_2)$ is given by
\begin{equation}\label{T2action4}
(x,u;z_1,z_2)\mapsto (x,e^{i\theta}u;[e^{i(\phi-\frac{l_1w'_1}{l_2}\theta)}z_1,e^{i(\phi-\frac{l_1w'_2}{l_2}\theta)}z_2]),
\end{equation}
Defining $\chi=\phi-\frac{l_1w'_1}{l_2}\theta$ gives
\begin{equation}\label{T2action5}
(x,u;z_1,z_2)\mapsto (x,e^{i\theta}u;[e^{i\chi}z_1,e^{i(\chi+\frac{l_1}{l_2}(w'_1-w'_2)\theta)}z_2]).
\end{equation}
The analysis above shows that this action is generally not free, but has branch divisors at the zero ($z_2=0$) and infinity ($z_1=0$) sections with ramification indices both equal to $m$. 

Equation (\ref{T2action5}) tells us that the $T^2$-quotient space $B_{l_1,l_2,1,\bfw'}$ is the projectivization of the holomorphic rank two vector bundle $E=\BOne \oplus L_n$ over $N$ where $\BOne$ denotes the trivial line bundle and $L_n$ is a line bundle of `degree' $n=\frac{l_1}{s}(w_1v_2-w_2v_1)$ with $s=\gcd(|w_1v_2-w_2v_1|,l_2)$. So $S_n=\bbp(\BOne\oplus L_n)$ is a smooth projective algebraic variety. Next we identify $N$ with the zero section $D_1$ of $L_n$, and note that $c_1(L_n)$ is just the restriction of the Poincar\'e dual  of $D_1$ to $D_1$, i.e. $PD(D_1)|_{D_1}=c_1(L_n)$.

Summarizing we have

\begin{theorem}\label{preSE}
Let $M_{l_1,l_2,\bfw}=M\star_{l_1,l_2}S^3_\bfw$ be the join as described in the beginning of the section with the induced contact structure $\cald_{l_1,l_2,\bfw}$. Let $\bfv=(v_1,v_2)$ be a weight vector with relatively prime integer components and let $\xi_\bfv$ be the corresponding Reeb vector field in the Sasaki cone $\gt^+_\bfw$. Then the quotient of $M_{l_1,l_2,\bfw}$ by the flow of the Reeb vector field $\xi_\bfv$ is a projective algebraic orbifold written as a the log pair $(S_n,\grD)$ where $S_n$ is the total space of the projective bundle $\bbp(\BOne\oplus L_n)$ over the K\"ahler manifold $N$ with $n=l_1\bigl(\frac{w_1v_2-w_2v_1}{s}\bigr)$, $\grD$ the branch divisor
\begin{equation}\label{branchdiv2}
\grD=(1-\frac{1}{m_1})D_1+ (1-\frac{1}{m_2})D_2,
\end{equation}
with ramification indices $m_i=v_i\frac{l_2}{s}=v_im$ and divisors $D_1$ and $D_2$ given by the zero section $\BOne\oplus 0$ and infinity section $0\oplus L_n$, respectively. The fiber of the orbifold $(S_n,\grD)$ is the orbifold $\bbc\bbp[v_1,v_2]/\bbz_m$.
\end{theorem}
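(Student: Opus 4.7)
The plan is to synthesize the material developed earlier in Section \ref{thejoin}. First, observe that the Reeb flow generated by $\xi_\bfv$ on $M_{l_1,l_2,\bfw}$ corresponds to the circle factor $\xi_\bfv=v_1H_1+v_2H_2$ in the $T^2$-action of Equation \eqref{T2action3}, with the second circle factor being the vector field $L_\bfw$ in \eqref{infq+2action} that produces the join quotient. Consequently the Reeb quotient $M_{l_1,l_2,\bfw}/\langle\xi_\bfv\rangle$ coincides with the $T^2$-quotient space $B_{l_1,l_2,\bfv,\bfw}$ appearing in the commutative diagram \eqref{comdia2}. The task is therefore to give an explicit complex and orbifold description of $B_{l_1,l_2,\bfv,\bfw}$.

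Second, invoke Lemma \ref{biholo}: the map $h_\bfv$ is a biholomorphism of $B_{l_1,l_2,\bfv,\bfw}$ onto $B_{l_1,l_2,1,\bfw'}$ with $\bfw'=(v_2w_1,v_1w_2)$. Using the substitution $\chi=\phi-\tfrac{l_1w'_1}{l_2}\theta$ that leads to Equation \eqref{T2action5}, one reads off that $B_{l_1,l_2,1,\bfw'}$ is the projectivization $\bbp(\BOne\oplus L_n)$ of a rank-two holomorphic bundle over $N$. After clearing the lens-space denominator so that the character becomes integral, the degree is $n=\tfrac{l_1}{s}(w_1v_2-w_2v_1)$, with zero section $D_1$ corresponding to $z_2=0$ and infinity section $D_2$ to $z_1=0$. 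This identifies the smooth underlying model of the quotient with $S_n$.

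Third, determine the log-pair structure. Proposition \ref{regularprop} computed the generic Reeb period on $Z=\{z_1z_2\neq 0\}$ as $2\pi/s$ and the period over the divisor $z_j=0$ as $2\pi/(v_il_2)$ with $\{i,j\}=\{1,2\}$. Equation \eqref{ramind} then packages this as a cyclic orbifold isotropy of order $m_i=v_il_2/s=v_im$ along $D_i$, yielding the branch divisor $\grD=(1-\tfrac{1}{m_1})D_1+(1-\tfrac{1}{m_2})D_2$. For the generic fiber of $B_{l_1,l_2,\bfv,\bfw}\to N$, present the lens space $L(l_2;l_1w_1,l_1w_2)$ as $S^3/\bbz_{l_2}$; quotienting $S^3$ by the $S^1_\bfv$-action with weights $(v_1,v_2)$ gives $\bbc\bbp^1[v_1,v_2]$, and the residual $\bbz_{l_2}/\bbz_s\cong\bbz_m$ descends to an effective $\bbz_m$-action on $\bbc\bbp^1[v_1,v_2]$, producing the fiber orbifold $\bbc\bbp^1[v_1,v_2]/\bbz_m$.

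The one delicate point is tracking the integer $n$ through the composition of the $mv_1v_2$-fold covering $\th_\bfv$ and the variable change on the lens-space quotient, since the exponent $\tfrac{l_1(w'_1-w'_2)}{l_2}$ appearing in \eqref{T2action5} is a priori only rational; it is reduction to lowest terms via $s=\gcd(|w_1v_2-w_2v_1|,l_2)$ that yields the genuine line-bundle degree $n=l_1(w_1v_2-w_2v_1)/s$ stated in the theorem. Once this is carefully tracked, the remainder is pure bookkeeping of the established computations.
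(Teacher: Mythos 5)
Your proposal is correct and follows essentially the same route as the paper: the paper states Theorem \ref{preSE} as a summary of exactly the ingredients you assemble, namely the identification of the Reeb quotient with the $T^2$-quotient $B_{l_1,l_2,\bfv,\bfw}$ of diagram \eqref{comdia2}, the biholomorphism of Lemma \ref{biholo} together with the coordinate change leading to Equation \eqref{T2action5} to exhibit the underlying space as $\bbp(\BOne\oplus L_n)$ with $n=\tfrac{l_1}{s}(w_1v_2-w_2v_1)$, and the period computations of Proposition \ref{regularprop} and Equation \eqref{ramind} to obtain the ramification indices $m_i=v_im$ and the fiber $\bbc\bbp^1[v_1,v_2]/\bbz_m$. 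Your attention to the reduction of the a priori rational exponent by $s=\gcd(|w_1v_2-w_2v_1|,l_2)$ is precisely the bookkeeping the paper performs.
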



Next we focus on the projective bundle $S_{n}=\bbp(\BOne\oplus L_n)$. From Equation (\ref{T2action5}) we see that $S_{n}$ is a fiber bundle over $N$ with fiber $\bbc\bbp^1$ associated to the principle $S^1$-bundle $M\ra{1.6} N$. We want to determine the K\"ahler class $[\gro_B]$ of the orbifold $B_{l_1,l_2,\bfv,\bfw}=(S_n,\grD)$ induced by the projection $M_{l_1,l_2,\bfw}\ra{1.6} B_{l_1,l_2,\bfv,\bfw}$. First consider the following commutative diagram:

\begin{equation}\label{t3comdia}
\begin{matrix} &&  M\times L(l_2;l_1w_1,l_1w_2) && \\
                          &&\decdnar{\pi_L} && \\
                        && M_{l_1,l_2,\bfw}&&\\
                          &\swarrow\pi_\bfw &&\searrow\pi_\bfv & \\
                         N\times\bbc\bbp^1[\bfw] &&&& (S_n,\grD) \\
                         &p_\bfw \searrow &&\swarrow p_\bfv & \\
                         && N &&
\end{matrix}
\end{equation}
where $p_\bfw,p_\bfv$ are the obvious projections. 
Second, note that we have the following lemma

\begin{lemma}\label{BoTo13b}
For the log pair $(S_n,\Delta)$ with
$$\grD= (1-1/m_1)D_1+(1-1/m_2)D_2$$ 
the orbifold Chern class equals
$$c_1^{orb}(S_n,\Delta) = p_\bfv^*c_1(N) + \frac{1}{m_1}PD(D_1)+\frac{1}{m_2}PD(D_2).$$
\end{lemma}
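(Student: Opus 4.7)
\textbf{Proof plan for Lemma \ref{BoTo13b}.} The plan is to reduce the claim to the standard definition of the orbifold first Chern class of a log pair, together with an identification of $c_1(S_n)$ in terms of $c_1(N)$ and the two distinguished sections. By definition, for a log pair $(X,\Delta)$ with $\Delta=\sum_i(1-1/m_i)D_i$ one has
\begin{equation*}
K^{orb}_{(X,\Delta)} = K_X + \Delta, \qquad c_1^{orb}(X,\Delta) = c_1(X) - \mathrm{PD}(\Delta).
\end{equation*}
Applied to $(S_n,\Delta)$ this gives
\begin{equation*}
c_1^{orb}(S_n,\Delta) = c_1(S_n) - \Bigl(1-\tfrac{1}{m_1}\Bigr)\mathrm{PD}(D_1) - \Bigl(1-\tfrac{1}{m_2}\Bigr)\mathrm{PD}(D_2),
\end{equation*}
so the desired formula is equivalent to the assertion
\begin{equation*}
c_1(S_n) = p_\bfv^* c_1(N) + \mathrm{PD}(D_1) + \mathrm{PD}(D_2).
\end{equation*}

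To establish this last identity I would use the splitting of the tangent bundle of the projective bundle $p_\bfv\colon S_n=\bbp(\BOne\oplus L_n)\to N$ into horizontal and relative parts, $TS_n\cong p_\bfv^*TN\oplus T_{S_n/N}$, which yields $c_1(S_n)=p_\bfv^*c_1(N)+c_1(T_{S_n/N})$. It then suffices to show
\begin{equation*}
c_1(T_{S_n/N}) = \mathrm{PD}(D_1)+\mathrm{PD}(D_2).
\end{equation*}
I would derive this from the relative Euler sequence $0\to\calo_{S_n}\to p_\bfv^*(\BOne\oplus L_n)^*\otimes\calo_{S_n}(1)\to T_{S_n/N}\to 0$, giving $c_1(T_{S_n/N})=2\xi-p_\bfv^*c_1(L_n)$ with $\xi=c_1(\calo_{S_n}(1))$. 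Writing $[D_i]=\xi+p_\bfv^*\alpha_i$ (the coefficient of $\xi$ is $1$ since each $D_i$ meets a generic fiber in one point), and using the disjointness $D_1\cdot D_2=0$ together with the relation $\xi^2=\xi\cdot p_\bfv^*c_1(L_n)$ and the fact that $\calo_{S_n}(1)$ restricts trivially to one of the two distinguished sections, one obtains $\alpha_1+\alpha_2=-c_1(L_n)$, and hence $[D_1]+[D_2]=2\xi-p_\bfv^*c_1(L_n)=c_1(T_{S_n/N})$.

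The substantive step is the identification $c_1(T_{S_n/N})=[D_1]+[D_2]$; everything else is bookkeeping. One can think of it as the globalization of the classical fact $T_{\bbp^1}=\calo(2)=\calo(p_0+p_\infty)$: the relative tangent sheaf of a $\bbp^1$-bundle of the form $\bbp(\BOne\oplus L)$ with its two canonical sections $D_1,D_2$ is precisely $\calo_{S_n}(D_1+D_2)$. The only mild subtlety is a consistent choice of convention for $\calo_{S_n}(1)$ (Grothendieck vs.\ classical), but either choice leads to the same final answer since the formula to be proved is symmetric in $D_1$ and $D_2$. Combining the identification of $c_1(S_n)$ with the definition of $c_1^{orb}$ and simplifying $1-(1-1/m_i)=1/m_i$ completes the proof.
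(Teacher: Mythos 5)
Your proof is correct, and its overall skeleton is the same as the paper's: both reduce the lemma to the standard log-pair formula $c_1^{orb}(S_n,\Delta)=c_1(S_n)-\sum_i(1-\tfrac{1}{m_i})PD(D_i)$ together with the identity $c_1(S_n)=p_\bfv^*c_1(N)+PD(D_1)+PD(D_2)$. Where you differ is in how that identity is justified. The paper verifies it by computing the explicit Ricci form \eqref{rho} of a convenient admissible metric (taking $F(\gz)=(1-\gz^2)\gp(\gz)$ in the case $m_1=m_2=1$) and remarks that ``it should also follow from general principles''; you supply exactly those general principles: the splitting $c_1(S_n)=p_\bfv^*c_1(N)+c_1(T_{S_n/N})$, the relative Euler sequence giving $c_1(T_{S_n/N})=2\xi-p_\bfv^*c_1(L_n)$, and the identification $[D_1]+[D_2]=2\xi-p_\bfv^*c_1(L_n)$ via $[D_i]=\xi+p_\bfv^*\alpha_i$ and the disjointness $D_1\cdot D_2=0$ combined with the Grothendieck relation $\xi^2=\xi\cdot p_\bfv^*c_1(L_n)$ (in fact the disjointness argument alone already forces $\alpha_1+\alpha_2=-c_1(L_n)$, so the appeal to the triviality of $\calo_{S_n}(1)$ on one section is not needed). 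Your route is purely cohomological and independent of the choice of metric, which makes it more self-contained; the paper's route has the virtue of staying inside the admissible-metric formalism it has already set up and of being an immediate consistency check on the curvature formulas used later. Your flagged caveat about the $\calo_{S_n}(1)$ convention is handled correctly, since the target identity is symmetric in $D_1$ and $D_2$.
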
 

\begin{proof}
The usual argument gives that
$$c_1^{orb}(S_n,\Delta) = p_\bfv^*c_1(S_n) +  (\frac{1}{m_1}-1)PD(D_1)+(\frac{1}{m_2}-1)PD(D_2)$$
and the lemma now follows from the fact that
$$c_1(S_n) = p_\bfv^*c_1(N) + PD(D_1)+PD(D_2).$$
One can verify the last fact by using the explicit Ricci form above for some convenient choice of admissible metric (e.g. take $F(\gz)=(1-\gz^2)\gp(\gz)$) in the case $m_1=m_2=1$, but it should also follow from general principles.
\end{proof}

By Equation \eqref{c1cald}
$$\pi_\bfv^*c_1^{orb}(S_n,\grD)=c_1(\cald_{l_1,l_2,\bfw})=(p_\bfw\circ\pi_\bfw)^*c_1(N)-l_1|\bfw|\grg.$$ 
So from Lemma \ref{BoTo13b} we have 
$$(p_\bfv\circ \pi_\bfv)^*c_1(N)+ \frac{1}{m_1}\pi_\bfv^*PD(D_1)+\frac{1}{m_2}\pi_\bfv^*PD(D_2) =(p_\bfw\circ \pi_\bfw)^*c_1(N)-l_1|\bfw|\grg.$$
We also know that (see e.g. Section 1.3 in \cite{ACGT08}) 
$$PD(D_1)-PD(D_2)=np_\bfv^*[\gro_N]$$
and so
$$\pi_\bfv^*PD(D_1)-\pi_\bfv^*PD(D_2 ) = n (p_\bfv\circ \pi_\bfv)^*[\omega_N].$$
From the commutative diagram \eqref{t3comdia} we see that 
$$(p_\bfv\circ \pi_\bfv)^*[\omega_N] = (p_\bfw\circ \pi_\bfw)^*[\omega_N] =  l_2 \gamma.$$
and 
$$(p_\bfv\circ \pi_\bfv)^*c_1(N)=(p_\bfw\circ \pi_\bfw)^*c_1(N),$$
so we get the system
$$
\begin{array}{rcl}
\frac{1}{m_1}\pi_\bfv^*PD(D_1)+\frac{1}{m_2}\pi_\bfv^*PD(D_2) &=& -l_1|\bfw|\grg.\\
\\
\pi_\bfv^*PD(D_1)-\pi_\bfv^*PD(D_2 )  & = &  l_2n \gamma 
\end{array}
$$
which implies that $\pi_\bfv^*PD(D_1) =  \frac{\frac{nl_2}{m_2} - l_1 |\bfw|}{\frac{1}{m_1} + \frac{1}{m_2}}= -m_1l_1w_2 \gamma$ and $\pi_\bfv^*PD(D_2) = -m_2 l_1w_1 \gamma$.

We are now ready to prove the following lemma

\begin{lemma}\label{kahclassB}
The induced K\"ahler class on $B_{l_1,l_2,\bfv,\bfw}=(S_n,\grD)$ takes the form
$$k_1p_\bfv^*[\gro_N]+k_2PD(D_1)$$
for some positive integers $k_1,k_2$.
\end{lemma}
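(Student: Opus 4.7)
The plan is to combine the Leray--Hirsch decomposition of $H^2(S_n)$ with the vanishing of $\pi_\bfv^*[\gro_B]$ in de Rham cohomology and the pullback identities for $p_\bfv^*[\gro_N]$ and $PD(D_1)$ derived just above. Since $p_\bfv:S_n\to N$ is a $\bbc\bbp^1$-bundle and $PD(D_1)$ restricts to a generator of $H^2$ of each fiber, the Leray--Hirsch theorem gives
$$H^2(S_n,\bbr) = p_\bfv^*H^2(N,\bbr)\oplus\bbr\cdot PD(D_1),$$
so we may write uniquely $[\gro_B] = p_\bfv^*\alpha + k_2\,PD(D_1)$ for some $\alpha\in H^2(N,\bbr)$ and $k_2\in\bbr$. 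The claim thus reduces to showing $\alpha = k_1[\gro_N]$ with $k_1, k_2\in\bbz^+$.

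For the form of $\alpha$, the key identity is $\pi_\bfv^*[\gro_B] = [d\eta_\bfv] = 0$ in $H^2(M_{l_1,l_2,\bfw},\bbr)$, since $\eta_\bfv$ is a globally defined $1$-form on $M_{l_1,l_2,\bfw}$. Combined with $\pi_\bfv^*p_\bfv^*[\gro_N] = l_2\grg$ and $\pi_\bfv^*PD(D_1) = -m_1 l_1 w_2\grg$ from just above, this forces $(p_\bfv\circ\pi_\bfv)^*\alpha = k_2 m_1 l_1 w_2\,\grg$. Because $M_{l_1,l_2,\bfw}\to N$ is a lens-space bundle (hence rationally a $3$-sphere bundle), the rational Leray--Serre spectral sequence collapses in degree $2$ and $(p_\bfv\circ\pi_\bfv)^*:H^2(N,\bbq)\to H^2(M_{l_1,l_2,\bfw},\bbq)$ is an isomorphism carrying $[\gro_N]$ to $l_2\grg$. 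Consequently $\alpha\in\bbq\cdot[\gro_N]$; writing $\alpha = k_1[\gro_N]$ yields the linear relation $k_1 l_2 = k_2 m_1 l_1 w_2$.

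Positivity of $k_2$ follows from $\int_F\gro_B = k_2 > 0$ on a generic fiber $F$ of $p_\bfv$, and $k_1 > 0$ then follows from the linear relation. The main obstacle is integrality. Substituting $m_1 = v_1 l_2/s$ gives $k_1 = k_2 v_1 l_1 w_2/s$; using the smoothness assumption $\gcd(l_2,w_i) = 1$ together with $\gcd(l_1,l_2) = 1$, one checks $\gcd(s, l_1 w_2) = 1$, so the primitive positive-integer solution of the linear relation is $k_1 = v_1 l_1 w_2/\gcd(s,v_1)$, $k_2 = s/\gcd(s,v_1)$. That $[\gro_B]$ realizes a positive integer multiple of this primitive class follows from integrality of the Boothby--Wang class of the $S^1$-orbibundle $\pi_\bfv:M_{l_1,l_2,\bfw}\to B_{l_1,l_2,\bfv,\bfw}$; verifying this by an explicit moment-map or fiber-integral computation using the join data is the technical heart of the proof.
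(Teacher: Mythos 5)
Your argument is correct and reaches the stated conclusion, but by a genuinely different route from the paper's. The paper works one level higher up: it considers the $T^2$-quotient $\pi_B:M\times L(l_2;l_1w_1,l_1w_2)\to B_{l_1,l_2,\bfv,\bfw}$ of Diagram \eqref{comdia2}, observes that $\ker\pi_B^*$ on $H^2$ is two-dimensional, checks that $p_\bfv^*[\gro_N]$ and $PD(D_1)$ lie in this kernel and are independent (the key point being $\pi_L^*\grg=0$), and concludes because $[\gro_B]$ pulls back to the exact form $d\eta_\bfv$ and hence also lies in $\ker\pi_B^*$. You instead stay on $M_{l_1,l_2,\bfw}$: you decompose $H^2(S_n,\bbr)$ by Leray--Hirsch and combine the single relation $\pi_\bfv^*[\gro_B]=0$ with injectivity of $(p_\bfv\circ\pi_\bfv)^*$ on $H^2(N,\bbq)$, which you justify via the collapse in degree two of the rational spectral sequence of the lens-space bundle (correct: the fiber is rationally a homology $3$-sphere and the structure group is connected, so $E_2^{1,1}=E_2^{0,2}=0$). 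That injectivity is precisely what the paper's passage to $M\times L$ is designed to make transparent, since there the kernel of $(p_\bfv\circ\pi_\bfv\circ\pi_L)^*$ is visibly the line spanned by $[\gro_N]$ (Gysin sequence of the Boothby--Wang bundle plus K\"unneth). What your route buys in exchange is the relation $k_1l_2=k_2m_1l_1w_2$ for free; the paper only extracts that ratio afterwards, in Lemma \ref{l2k2}, by essentially the computation you carry out here.

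On the one point you defer --- integrality of $k_1,k_2$ --- you are no worse off than the paper: its proof of this lemma establishes only that $[\gro_B]$ is a real linear combination of the two classes, and the normalization is fixed later in Lemma \ref{l2k2} via the assertion $\gcd(k_1,k_2)=m=l_2/s$, which itself rests on the orbifold Boothby--Wang construction rather than on an argument given in this proof. (Incidentally, in your primitive solution one in fact has $\gcd(s,v_1)=1$: a common prime of $s$ and $v_1$ would divide $w_1v_2$, hence $w_1$ since $\gcd(v_1,v_2)=1$, contradicting $\gcd(l_2,w_1)=1$; so the primitive class is $v_1l_1w_2\,p_\bfv^*[\gro_N]+s\,PD(D_1)$ and the paper's answer is exactly $m$ times it.) Your reduction of the remaining work to a single fiber-integral or moment-map computation is therefore a fair account of what is left, not a defect relative to the paper's own treatment.
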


\begin{proof}
From the commutative diagram \eqref{t3comdia} we see that on degree $2$ homology $\ker\pi_B^*=(\pi_\bfv\circ\pi_L)^*$ has dimension $2$. We claim that $p_\bfv^*[\gro_N]$ and $PD(D_1)$ span $\ker\pi_B^*$. To see this we note that from the definition of the join, that $p_\bfv^*[\gro_N]$ is in $\ker\pi_B^*$. Moreover, 
$$(p_\bfv\circ\pi_\bfv\circ\pi_L)^*:H^2(N,\bbr)\ra{1.6} H^2(M\times L(l_2;l_1w_1,l_1w_2),\bbr)$$ 
has a one dimensional kernel. So it must be spanned by $[\gro_N]$.
Since $p_\bfv^*[\gro_N]$ is in $\ker\pi_B^*$ and $(p_\bfv\circ \pi_\bfv)^*[\omega_N] =  l_2 \gamma$, we must have that $\pi_L^* \gamma = 0$.
It follows that $PD(D_1)$ is also in the kernel of $\pi_B^*$ and since it is clearly independent of $p_\bfv^*[\gro_N]$ we conclude that
$p_\bfv^*[\gro_N]$ and $PD(D_1)$ span $\ker\pi_B^*$.
 
The induced K\"ahler class on $B_{l_1,l_2,\bfv,\bfw}=(S_n,\grD)$ is clearly in $\ker\pi_B^*$ and so the lemma follows.
\end{proof}

In view of Lemma \ref{kahclassB} we write the induced K\"ahler class $[\omega_B]$ on $(S_n,\grD)$ as
\begin{equation}\label{BKahform}
[\gro_B]=k_1p_\bfv^*[\gro_N]+k_2 PD(D_1)
\end{equation}

\begin{lemma}\label{l2k2}
The following hold:

\begin{enumerate}
\item $k_2=l_2$,
\item $k_1= m_1 l_1 w_2$
\end{enumerate}
\end{lemma}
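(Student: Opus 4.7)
The plan is to obtain the two identities by combining a single cohomological relation with a direct intersection computation on a generic fiber, following the template of the analogous argument for the case $l_2=1$ treated in \cite{BoTo13}.

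The cohomological ingredient: since $\omega_B$ is the base K\"ahler form of the Boothby--Wang orbi-bundle determined by the Reeb $\xi_\bfv$, the pullback $\pi_\bfv^*\omega_B=d\eta_\bfv$ is exact, whence $\pi_\bfv^*[\omega_B]=0$ in $H^2(M_{l_1,l_2,\bfw},\bbr)$. Substituting the pullback formulas $(p_\bfv\circ\pi_\bfv)^*[\omega_N]=l_2\gamma$ and $\pi_\bfv^*PD(D_1)=-m_1l_1w_2\gamma$ derived immediately before the lemma into $[\omega_B]=k_1p_\bfv^*[\omega_N]+k_2PD(D_1)$ yields the scalar relation
\[ k_1l_2=k_2m_1l_1w_2. \]
This reduces the two claims of the lemma to establishing either $k_2=l_2$ or $k_1=m_1l_1w_2$.

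The normalization ingredient: I would compute $k_2$ by evaluating $\int_F[\omega_B]$ on a generic fiber $F\cong\bbc\bbp^1$ of $p_\bfv:S_n\to N$. Intersection theory gives $\int_F[\omega_B]=k_2$, since $p_\bfv^*[\omega_N]|_F=0$ and $F\cdot D_1=1$. On the Sasakian side, $\pi_\bfv^{-1}(F)$ is the lens-space fiber $L(l_2;l_1w_1,l_1w_2)$ of $M_{l_1,l_2,\bfw}\to N$ and $\pi_\bfv|_L$ is an $S^1$-orbi-bundle whose Euler number over $F$ equals $k_2$. Taking a local section of $\pi_\bfv$ on $F$ minus small disks around the two intersection points with $D_1$ and $D_2$, applying Stokes to $d\eta_\bfv$, and using the Reeb periods $2\pi/(v_il_2)$ at $D_i$ from Proposition \ref{regularprop}(2) together with the ramification indices $m_i=v_il_2/s$ and the $\bbz_{l_2}$-identification defining $L$, one arrives at $\int_F\omega_B=l_2$. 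This gives $k_2=l_2$, and the scalar relation then forces $k_1=m_1l_1w_2$.

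The main obstacle is the careful accounting in this fiber computation: with the two ramification loci, the $\bbz_{l_2}$ quotient, and the normalization of $\omega_B$ all interacting, one must ensure that the final answer is exactly $l_2$ rather than a different numerical multiple. The biholomorphism $h_\bfv$ of Lemma \ref{biholo} and the covering map \eqref{th} provide a useful cross-check via the almost-regular case $\bfv=(1,1)$, where $m_1=m_2=m=l_2/s$ and the structure on $L$ becomes symmetric, making the boundary contributions easier to track.
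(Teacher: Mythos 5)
Your first step is identical to the paper's: from the triviality of $\pi_\bfv^*[\gro_B]$ together with $(p_\bfv\circ\pi_\bfv)^*[\gro_N]=l_2\grg$ and $\pi_\bfv^*PD(D_1)=-m_1l_1w_2\grg$ one gets the single relation $k_1l_2=k_2m_1l_1w_2$. Where you diverge is in fixing the overall scale. The paper does this arithmetically: it invokes $\gcd(k_1,k_2)=m=l_2/s$, and since $\gcd(m_1l_1w_2,l_2)=m$ (using $\gcd(l_1,l_2)=\gcd(w_2,l_2)=\gcd(v_1,s)=1$), the relation then forces $k_2=l_2$ and $k_1=m_1l_1w_2$ on the nose. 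You instead propose to compute $k_2=\int_F[\gro_B]$ over a generic fiber by a Stokes/period argument on the lens-space fiber.

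This normalization step is where your argument has a genuine gap: you never say what $\gro_B$ is as a form, i.e.\ how it is scaled relative to $\eta_\bfv$, and that scale is precisely what the lemma is determining. If you adopt the natural Boothby--Wang normalization $\pi_\bfv^*\gro_B=d\eta_\bfv$, then the period computation you sketch (generic period $2\pi/s$, periods $2\pi/(v_il_2)$ at the two endpoints) produces the orbifold Euler number of the Reeb fibration over $F$, which is $l_2/(mv_1v_2)=s/(v_1v_2)$ --- generally not an integer and not $l_2$. The missing factor $mv_1v_2$ is exactly the constant $b$ in $\pi_\bfv^*\gro_B=b\,d\eta_\bfv$, which the paper only determines in Proposition \ref{screl}, and determines there \emph{by using} Lemma \ref{l2k2}; so importing it would be circular. (Your proposed cross-check in the almost-regular case $\bfv=(1,1)$ would not catch this when $m=1$, e.g.\ in the regular case, but does expose the discrepancy by a factor of $m$ whenever $m>1$.) To make your route work you would need an independent characterization of the integral class $[\gro_B]$ --- for instance a direct proof that $\gcd(k_1,k_2)=m$, or that $[\gro_B]$ is $mv_1v_2$ times the real Euler class of the Reeb orbibundle. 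It is only fair to note that the paper's own input $\gcd(k_1,k_2)=m$ is asserted rather than proved, so your instinct to seek a geometric normalization is sound; but as written the fiber integral does not come out to $l_2$ without assuming what is to be shown.
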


\begin{proof}
Since we know that $\pi_\bfv^*[\gro_B]$ is a trivial class in $M_{l_1,l_2,\bfw}$
and $(p_\bfv\circ \pi_\bfv)^*[\omega_N] = l_2 \gamma$ while $\pi_\bfv^*PD(D_1) = -m_1l_1w_2 \gamma$, we see immediately that
$k_1 l_2 - k_2 m_1 l_1 w_2 = 0$ and since $\gcd(k_1,k_2) = m = l_2/s$, we conclude that
$k_2=l_2$ while $k_1 = m_1 l_1 w_2$.
\end{proof}

In the almost regular case this process can be inverted. Given positive integers $n,m,k_1,k_2$ with $m=\gcd(k_1,k_2)$ we can determine the relatively prime positive integers $w_1,w_2$ from the equation
$$\frac{w_2}{w_1}=\frac{k_1}{nk_2+k_1}$$
and the relatively prime positive integers $l_1,l_2$ from
$$\frac{l_1}{l_2}=\frac{n}{m(w_1-w_2)}.$$
This gives an analog of diagram (32) of \cite{BoTo13} together with its interpretation depicted in the diagram
\begin{equation}\label{prodHir}
\begin{matrix} && M_{l_1,l_2,\bfw} && \\
                        &&&&  \\
                        & \pi_\bfw \swarrow  &&\searrow\pi_\bfv & \\
                        &&&& \\
                       & N\times \bbc\bbp(\bfw) &&& (S_n,\grD).
\end{matrix}
\end{equation}
Thus, we can view $M_{l_1,l_2,\bfw}$ in two ways. First, the southwest arrow describes an $S^1$ orbibundle over the K\"ahler orbifold $N\times \bbc\bbp^1[\bfw]$ with its product structure. Second the southeast arrow describes the K\"ahler structure of a $\bbc\bbp^1$-bundle over $N$ with twisted complex structure and a mild orbifold structure on the fibers given as a quotient by an almost regular Reeb vector field. Note that in (32) of \cite{BoTo13} the southeast arrow is the quotient by a regular Reeb vector field.

\section{The Topology of the Joins}

Since we are mainly interested in compact Sasaki manifolds, which have finite fundamental group, we shall assume that the Sasaki manifold $M$ is simply connected. It is then easy to construct examples with cyclic fundamental group. From the homotopy exact sequence of the fibration $S^1\ra{1.5}M\times S^3\ra{1.5} M_{l_1,l_2,\bfw}$ we have
\begin{proposition}\label{simcon}
If $M$ is simply connected, then so is $M_{l_1,l_2,\bfw}$. Moreover, if $M$ is 2-connected, $\pi_2(M_{l_1,l_2,\bfw})\approx \bbz$.
\end{proposition}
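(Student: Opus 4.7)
The plan is to read off both conclusions directly from the long exact homotopy sequence of the principal circle fibration
\[
S^1 \longrightarrow M\times S^3 \longrightarrow M_{l_1,l_2,\bfw},
\]
which is the very bundle used to define the join (quotient by the $S^1$-action generated by $L_{l_1,l_2,\bfw}$). No further geometric input is needed; the argument is purely homotopy-theoretic.

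First I would write out the relevant piece of the exact sequence, namely
\[
\pi_2(S^1)\to \pi_2(M\times S^3)\to \pi_2(M_{l_1,l_2,\bfw})\to \pi_1(S^1)\to \pi_1(M\times S^3)\to \pi_1(M_{l_1,l_2,\bfw})\to \pi_0(S^1).
\]
Since $S^1=K(\bbz,1)$ we have $\pi_2(S^1)=0$, $\pi_1(S^1)=\bbz$, and $\pi_0(S^1)=0$. Using the product formula $\pi_k(M\times S^3)=\pi_k(M)\oplus\pi_k(S^3)$ and the fact that $S^3$ is $2$-connected with $\pi_3(S^3)=\bbz$, the input depends only on the connectivity of $M$.

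For the first statement, assume $M$ is simply connected. Then $\pi_1(M\times S^3)=\pi_1(M)\oplus\pi_1(S^3)=0$, and the tail
\[
\pi_1(M\times S^3)=0\;\longrightarrow\;\pi_1(M_{l_1,l_2,\bfw})\;\longrightarrow\;\pi_0(S^1)=0
\]
forces $\pi_1(M_{l_1,l_2,\bfw})=0$.

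For the second statement, assume $M$ is $2$-connected. Then $\pi_2(M\times S^3)=\pi_2(M)\oplus\pi_2(S^3)=0$ and, as above, $\pi_1(M\times S^3)=0$. The relevant segment collapses to
\[
0\;\longrightarrow\;\pi_2(M_{l_1,l_2,\bfw})\;\longrightarrow\;\bbz\;\longrightarrow\;0,
\]
yielding $\pi_2(M_{l_1,l_2,\bfw})\cong\bbz$. There is no real obstacle here; the only point to verify, which is immediate from the construction in Section~\ref{thejoin}, is that the $S^1$ generated by $L_{l_1,l_2,\bfw}$ really does act freely on $M\times S^3$, so that the sequence above is indeed the homotopy long exact sequence of a fiber bundle.
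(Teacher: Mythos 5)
Your argument is correct and is exactly the paper's argument: the authors also deduce the proposition from the long exact homotopy sequence of the fibration $S^1\to M\times S^3\to M_{l_1,l_2,\bfw}$ (they state this in one line just before the proposition, while you spell out the exactness bookkeeping). Your closing remark about freeness of the $S^1$-action is the right thing to check and is guaranteed by the standing smoothness assumption $\gcd(l_2,l_1w_1w_2)=1$ from Section~\ref{thejoin}.
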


We now describe our method for computing the cohomology ring of the join $M_{l_1,l_2,\bfw}$.

\subsection{The Method}
Our approach uses the spectral sequence method employed in \cite{WaZi90,BG00a} (see also Section 7.6.2 of \cite{BG05}). The fibration $\pi_L$ in Diagram (\ref{s2comdia}) together with the torus bundle with total space $M\times S^3_\bfw$ gives the commutative diagram of fibrations
\begin{equation}\label{orbifibrationexactseq}
\begin{matrix}M\times S^3_\bfw &\ra{2.6} &M_{l_1,l_2,\bfw}&\ra{2.6}
&\mathsf{B}S^1 \\
\decdnar{=}&&\decdnar{}&&\decdnar{\psi}\\
M\times S^3_\bfw&\ra{2.6} & N\times\mathsf{B}\bbc\bbp^1[\bfw]&\ra{2.6}
&\mathsf{B}S^1\times \mathsf{B}S^1\, 
\end{matrix} \qquad \qquad
\end{equation}
where $\mathsf{B}G$ is the classifying space of a group $G$ or Haefliger's classifying space \cite{Hae84} of an orbifold if $G$ is an orbifold. Note that the lower fibration is a product of fibrations. In particular,  the fibration 
\begin{equation}\label{cporbfib}
S^3_\bfw \ra{2.6} \mathsf{B}\bbc\bbp^1[\bfw]\ra{2.6} \mathsf{B}S^1
\end{equation}
is rationally equivalent to the Hopf fibration, so over $\bbq$ the only non-vanishing differentials in its Leray-Serre spectral sequence are $d_4(\grb)=s^2$ where $\grb$ is the orientation class of $S^3$ and $s$ is a basis in $H^2( \mathsf{B}S^1,\bbq)\approx \bbq$ and those induced from $d_4$ by naturality. However, we want the cohomology over $\bbz$. 

\begin{lemma}\label{cporbcoh}
For $w_1$ and $w_2$ relatively prime positive integers we have
$$H^r_{orb}(\bbc\bbp^1[\bfw],\bbz)=H^r( \mathsf{B}\bbc\bbp^1[\bfw],\bbz)= \begin{cases}
                    \bbz &\text{for $r=0,2$,}\\                  
                    \bbz_{w_1w_2} &\text{for $r>2$ even,}\\
                     0 &\text{for $r$ odd.}
                     \end{cases}$$           
\end{lemma}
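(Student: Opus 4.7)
The plan is to push the Leray--Serre spectral sequence of the orbifold fibration \eqref{cporbfib} set up above from $\bbq$-coefficients to $\bbz$-coefficients. Since the base $\mathsf{B}S^1=\bbc\bbp^\infty$ is simply connected,
$$
E_2^{p,q}=H^p\bigl(\mathsf{B}S^1,\,H^q(S^3,\bbz)\bigr),
$$
and because $H^*(S^3,\bbz)$ is concentrated in degrees $0$ and $3$, this $E_2$-page lives on just two rows, each of which is the free $\bbz[s]$-module on one generator ($1$ in the $q=0$ row, $\grb$ in the $q=3$ row), with all odd columns vanishing. Consequently the only possibly nonzero differential is $d_4\colon E_4^{p,3}\to E_4^{p+4,0}$, and by multiplicativity the entire spectral sequence is pinned down once we know the single integer $c$ for which $d_4(\grb)=cs^2$.

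The pivotal step is to identify $c$ with the $S^1$-equivariant Euler class of the representation $V=\bbc(w_1)\oplus\bbc(w_2)$ whose unit sphere is $S^3_\bfw$. Indeed, Haefliger's classifying space $\mathsf{B}\bbc\bbp^1[\bfw]$ is the Borel construction for the weighted $S^1$-action on $V\setminus\{0\}\simeq S^3_\bfw$, so the integral transgression in the spectral sequence agrees, via the equivariant Thom isomorphism applied to the pair $(V,V\setminus\{0\})$, with cup product by $e_{S^1}(V)$. Since $e_{S^1}(\bbc(w_i))=w_i s$ and the Euler class is multiplicative over direct sums, $e_{S^1}(V)=w_1w_2\,s^2$, whence $c=w_1w_2$. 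As a consistency check, $w_1w_2$ becomes invertible in $\bbq$, recovering the rational statement $d_4(\grb)=s^2$ quoted in the excerpt.

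With $d_4(\grb s^k)=w_1w_2\,s^{k+2}$ for all $k\geq0$, the $E_\infty$-page is immediate. Every $d_4$ on the $q=3$ row is injective into a torsion-free group, so that row is annihilated and $E_\infty^{p,3}=0$ for every $p$; on the $q=0$ row one reads off $E_\infty^{0,0}=E_\infty^{2,0}=\bbz$ and $E_\infty^{2k,0}=\bbz/w_1w_2\bbz$ for $k\geq 2$. The surviving $q=0$ entries contribute only to even total degrees, so in each total degree at most one $E_\infty^{p,q}$ remains, no extension problems arise, and the groups claimed in the lemma follow.

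The main obstacle is establishing $c=w_1w_2$ on the nose; once that is in hand the remainder is purely formal bookkeeping. An alternative route would bypass the spectral sequence entirely and run the long exact sequence of the $S^1$-equivariant pair $(V,V\setminus\{0\})$ directly, where the forget-supports map is cup product with $w_1w_2\,s^2$; both methods yield the same answer.
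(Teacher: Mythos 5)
Your proof is correct, but it takes a genuinely different route from the paper's. The paper computes $H^*(\mathsf{B}\bbc\bbp^1[\bfw],\bbz)$ by a Mayer--Vietoris argument: it covers $\mathsf{B}\bbc\bbp^1[\bfw]$ by the preimages of $\bbc\bbp^1\setminus\{0\}$ and $\bbc\bbp^1\setminus\{\infty\}$, identifies each piece with the Eilenberg--MacLane space $K(\bbz_{w_i},1)$ (so that the local contributions are the group cohomologies of the uniformizing groups $\bbz_{w_1}$, $\bbz_{w_2}$, with trivial overlap because $\gcd(w_1,w_2)=1$), and then resolves the one nontrivial extension $0\to\bbz\to H^2\to\bbz_{w_1w_2}\to 0$ in degree $2$ by a glance at the $E_2$-term of the spectral sequence of \eqref{cporbfib}. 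You instead run the integral Leray--Serre spectral sequence of \eqref{cporbfib} in full, pinning down the transgression $d_4(\grb)=w_1w_2\,s^2$ as the $S^1$-equivariant Euler class of $\bbc(w_1)\oplus\bbc(w_2)$; this is the standard Gysin-sequence computation of $H^*_{S^1}(S(V))$ and is sound, given the identification (already built into diagram \eqref{orbifibrationexactseq}) of $\mathsf{B}\bbc\bbp^1[\bfw]$ with the Borel construction. The trade-off: the paper's argument works chart-by-chart and so adapts to orbifolds that are not global quotients by a compact group, and it exhibits the torsion as coming from the local isotropy groups $\bbz_{w_1}\oplus\bbz_{w_2}\cong\bbz_{w_1w_2}$; your argument is more streamlined for this global-quotient situation and, notably, establishes the differential $d_4(\gra)=w_1w_2\,s^2$ \emph{directly} rather than inferring it afterwards --- so it proves Lemma \ref{LS} and Lemma \ref{cporbcoh} simultaneously, in the reverse logical order from the paper, and sidesteps the degree-$2$ extension problem entirely.
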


\begin{proof}
As in \cite{BG05} we cover the $\mathsf{B}\bbc\bbp^1[\bfw]$ with two overlapping open sets $p^{-1}(U_i)\approx \tU_i\times_{\grG_i}EO$ where $U_i$ is $\bbc\bbp^1\setminus \{0\}$ and $\bbc\bbp^1\setminus \{\infty\}$ for $i=1,2$, respectively. The Mayer-Vietoris sequence is
$$\ra{.8}H^r(\mathsf{B}\bbc\bbp^1[\bfw],\bbz)\ra{1.0} H^r(p^{-1}(U_1),\bbz)\oplus H^r(p^{-1}(U_2)\ra{1.0} H^r(p^{-1}(U_1)\cap p^{-1}(U_2),\bbz)\ra{.4}\cdots$$
Now $p^{-1}(U_i)\approx \tU_i\times_{\grG_i}EO$ is the Eilenberg-MacLane space $K(\bbz_{w_i},1)$ whose cohomology is the group cohomology
$$H^r(\bbz_{w_i},\bbz)=\begin{cases}
                    \bbz &\text{for $r=0$,}\\
                    \bbz_{w_i} &\text{for $r>0$ even,}\\
                     0 &\text{for $r$ odd.}
\end{cases}$$
Moreover, $p^{-1}(U_1)\cap p^{-1}(U_2)=\tU_1\cap\tU_2\times_{\grG_1\cap\grG_2}EO$ and  since $w_1,w_2$ are relatively prime $\grG_1\cap\grG_2=\bbz_{w_1}\cap\bbz_{w_2}=\{\BOne\}$.
So for $r=2$ the Mayer-Vietoris sequence becomes
\begin{equation}\label{LS2}
0\ra{1.8}\bbz\fract{j}{\ra{1.8}}H^2(\mathsf{B}\bbc\bbp^1[\bfw],\bbz)\ra{1.8}\bbz_{w_1w_2}\ra{1.8} 0.
\end{equation}
From the $E_2$ term of the Leray-Serre spectral sequence of the fibration (\ref{cporbfib}), we see that the map $j$ in (\ref{LS2}) must be multiplication by $w_1w_2$ implying that $H^2(\mathsf{B}\bbc\bbp^1[\bfw],\bbz)\approx \bbz$.

For $r>2$ even the sequence gives $H^r(\mathsf{B}\bbc\bbp^1[\bfw],\bbz)\approx \bbz_{w_1}\oplus \bbz_{w_2}\approx \bbz_{w_1w_2}$, whereas for $r$ odd $H^r(\mathsf{B}\bbc\bbp^1[\bfw],\bbz)\approx 0$.
\end{proof} 

One now easily sees that 

\begin{lemma}\label{LS}
The only non-vanishing differentials in the Leray-Serre spectral sequence of the fibration (\ref{cporbfib}) are those induced naturally by $d_4(\gra)= w_1w_2s^2$ for $s\in H^2(\mathsf{B}S^1,\bbz)\approx \bbz[s]$ and $\gra$ the orientation class of $S^3$.       
\end{lemma}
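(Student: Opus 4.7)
The plan is to read off the differential $d_4$ from the known answer computed in Lemma \ref{cporbcoh} by reverse engineering. First I would set up the $E_2$ page of the Leray-Serre spectral sequence of (\ref{cporbfib}): since $\mathsf{B}S^1$ is simply connected and $H^*(S^3,\bbz)$ is free abelian, the $E_2$ page is the tensor product $E_2^{p,q}=H^p(\mathsf{B}S^1,\bbz)\otimes H^q(S^3,\bbz)$. With $H^*(\mathsf{B}S^1,\bbz)=\bbz[s]$ concentrated in even degrees and $H^*(S^3,\bbz)=\bbz\oplus \bbz\,\gra$ concentrated in degrees $0$ and $3$, the only nonzero rows are $q=0$ and $q=3$, each containing a $\bbz$ in every even column.

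Next I would observe that the only differentials that can move between these two nonempty rows are those of the form $d_4\colon E_4^{p,3}\to E_4^{p+4,0}$; every other $d_r$ lands in an empty region of the page. By multiplicativity and the Leibniz rule, and since the classes $s^k$ arise from the base and are therefore permanent cycles, the whole collection of $d_4$'s is determined by the single value $d_4(\gra)=n\,s^2\in H^4(\mathsf{B}S^1,\bbz)$ for some integer $n$, via $d_4(\gra s^k)=n\, s^{k+2}$.

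The final step pins down $n$ by comparing with the abutment. For $k\geq 2$, the group $H^{2k}(\mathsf{B}\bbc\bbp^1[\bfw],\bbz)$ receives contributions only from $E_\infty^{2k,0}$ and $E_\infty^{2k-3,3}$; the latter vanishes because $2k-3$ is odd. Since $d_4\colon E_4^{2k-4,3}\to E_4^{2k,0}$ is multiplication by $n$ on $\bbz$, we get $E_\infty^{2k,0}=\bbz/n\bbz$, and Lemma \ref{cporbcoh} forces this to equal $\bbz_{w_1w_2}$, so $n=w_1w_2$ (the sign is absorbed into the choice of orientation class $\gra$). The remaining entries are then automatically consistent: $E_\infty^{2k-4,3}=\ker(w_1w_2\colon \bbz\to\bbz)=0$ matches the vanishing of odd-degree cohomology, while $E_\infty^{0,0}=E_\infty^{2,0}=\bbz$ is untouched by $d_4$ and matches $H^0=H^2=\bbz$ from Lemma \ref{cporbcoh}.

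There is really no substantial obstacle here beyond correctly invoking Lemma \ref{cporbcoh} and checking that the Künneth-style identification of $E_2$ is legitimate (which it is, because the base is simply connected and the fiber cohomology is torsion-free); the rest is just inverting the degeneration of the spectral sequence to recover the unique nontrivial differential.
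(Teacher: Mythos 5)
Your proposal is correct and follows essentially the same route the paper intends: the paper simply asserts the lemma with "one now easily sees," deducing the differential from the cohomology groups computed in Lemma \ref{cporbcoh}, which is exactly your reverse-engineering argument. Your filling-in of the details (the two-row $E_2$ page forcing all differentials except $d_4$ to vanish, multiplicativity reducing everything to $d_4(\gra)=n s^2$, and the comparison $E_\infty^{2k,0}=\bbz/n\bbz\cong\bbz_{w_1w_2}$ pinning down $n$) is accurate and complete.
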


Now the map $\psi$ of Diagram (\ref{orbifibrationexactseq}) is that induced by the inclusion $e^{i\theta}\mapsto (e^{il_2\theta},e^{-il_1\theta})$. So noting 
$$H^*(\mathsf{B}S^1\times \mathsf{B}S^1,\bbz)=\bbz[s_1,s_2]$$ 
we see that $\psi^*s_1=l_2s$ and $\psi^*s_2=-l_1s$. This together with Lemma \ref{LS} gives $d_4(\gra)=w_1w_2l_1^2s^2$ in the Leray-Serre spectral sequence of the top fibration in Diagram (\ref{orbifibrationexactseq}).

Further analysis depends on the differentials in the spectral sequence of the fibration 
\begin{equation}\label{MNspec}
M\ra{1.5}N\ra{1.5}\mathsf{B}S^1. 
\end{equation}

\begin{algorithm}
Given the differentials in the spectral sequence of the fibration (\ref{MNspec}), one can use the commutative diagram (\ref{orbifibrationexactseq}) to compute the cohomology ring of the join manifold $M_{l_1,l_2,\bfw}$.
\end{algorithm}

It is worth mentioning that the finiteness of deformation types of smooth Fano manifolds implies a bound on the Betti numbers of the join which only depends on dimension. This gives a Betti number bound on the manifolds obtained from our construction when $N$ is Fano. In particular, in dimension seven $b_2(M^7_{l_1,l_2,\bfw})\leq 9$, whereas, in dimension nine we have the bound $b_2(M^9_{l_1,l_2,\bfw})\leq 10$ \cite{BG00a}.

\subsection{An Example in General Dimension}\label{gendex}
One case that is particularly easy to describe in all odd dimensions is when $M$ is the odd-dimensional sphere $S^{2r+1}$ with $r=2,3,\ldots,$. Here we have $N=\bbc\bbp^r$ which is monotone with Fano index $\cali_N=r+1$. We have

\begin{theorem}\label{topcpr}
The join $M^{2r+3}_{l_1,l_2,\bfw}=S^{2r+1}\star_{l_1,l_2}S^3_\bfw$ has integral cohomology ring
given by
$$\bbz[x,y]/(w_1w_2l_1^2x^2,x^{r+1},x^2y,y^2)$$
where $x,y$ are classes of degree $2$ and $2r+1$, respectively.
\end{theorem}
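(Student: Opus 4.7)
The plan is to apply the Leray--Serre spectral sequence to the upper fibration in diagram~\eqref{orbifibrationexactseq}, specialized to $M = S^{2r+1}$ and $N = \bbc\bbp^r$. The fiber cohomology is $H^*(S^{2r+1}\times S^3_\bfw;\bbz) = \bbz[\alpha,\beta]/(\alpha^2,\beta^2)$ with $|\alpha|=3$ and $|\beta|=2r+1$, and the base contributes $H^*(\mathsf{B}S^1;\bbz) = \bbz[s]$ with $|s|=2$. The only potentially nonzero differentials are the transgressions $d_4(\alpha)$ and $d_{2r+2}(\beta)$, both of which I would read off from the product fibration on the bottom row of~\eqref{orbifibrationexactseq} by naturality under $\psi$.

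First I would identify the differentials. Lemma~\ref{LS} combined with $\psi^*s_2 = -l_1 s$ gives $d_4(\alpha) = w_1w_2l_1^2\, s^2$. For the other transgression, the relevant input is the classical fibration $S^{2r+1}\to \bbc\bbp^r \to \mathsf{B}S^1$: since $H^*(\bbc\bbp^r;\bbz) = \bbz[x]/(x^{r+1})$ with $x$ the pullback of the generator of $H^2(\mathsf{B}S^1)$, the only possibility is $d_{2r+2}(\beta) = s_1^{r+1}$ in the lower fibration. Pulling back via $\psi^*s_1 = l_2 s$ then yields $d_{2r+2}(\beta) = l_2^{r+1}\,s^{r+1}$ in our spectral sequence.

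I would then compute $E_\infty$. After $d_4$, row $0$ becomes $\bbz[s]/(w_1w_2l_1^2 s^2)$; by the Leibniz rule and injectivity of multiplication by $w_1w_2l_1^2$ on $\bbz[s]$, rows $3$ and $2r+4$ collapse to zero, while row $2r+1$ survives as $\bbz[s]/(w_1w_2l_1^2 s^2)\cdot\beta$. The crucial arithmetic input is the smoothness hypothesis $\gcd(l_2,l_1w_1w_2)=1$, which forces $\gcd(l_2^{r+1}, w_1w_2l_1^2)=1$; hence $l_2^{r+1}$ is a unit in $\bbz/(w_1w_2l_1^2)$. Consequently $d_{2r+2}(s^k\beta) = l_2^{r+1}\,s^{r+1+k}$ surjects onto the torsion group in $E_{2r+2}^{2(r+1+k),0}$ for every $k\geq 0$, simultaneously killing row $0$ in all degrees $\geq 2(r+1)$ and leaving kernel $w_1w_2l_1^2\,\bbz\cdot s^k\beta \cong \bbz$ in row $2r+1$.

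Finally I would read off the ring. Taking $x\in H^2$ to lift $s$ and $y\in H^{2r+1}$ to lift $w_1w_2l_1^2\,\beta$, the relation $w_1w_2l_1^2\,x^2 = 0$ comes from $d_4$, the relation $x^{r+1} = 0$ from $d_{2r+2}$, the relation $x^2 y = 0$ from $w_1w_2l_1^2\,s^2\beta$ already vanishing in $E_5^{4,2r+1}$, and $y^2\in H^{4r+2}=0$ because every $E_\infty^{p,q}$ with $p+q=4r+2$ vanishes. Since each occupied total degree is concentrated in a single column of $E_\infty$, there are no multiplicative extension problems. The main obstacle I anticipate is correctly pinning down the coefficient $l_2^{r+1}$ in the transgression $d_{2r+2}(\beta)$ and then invoking coprimality to make its image fill the full torsion subgroup; without that one would neither obtain $x^{r+1}=0$ exactly nor a free integral generator $y$ in the top odd degree.
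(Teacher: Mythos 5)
Your argument is correct and follows essentially the same route as the paper: the Leray--Serre spectral sequence of the top fibration in diagram \eqref{orbifibrationexactseq}, with $d_4(\alpha)=w_1w_2l_1^2s^2$ and $d_{2r+2}(\beta)=l_2^{r+1}s^{r+1}$ obtained by naturality from the bottom product fibration via $\psi^*s_1=l_2s$, $\psi^*s_2=-l_1s$, and the coprimality $\gcd(l_2,w_1w_2l_1^2)=1$ forcing $x^{r+1}=0$. Your write-up merely makes explicit the $E_\infty$ bookkeeping and the absence of extension problems, which the paper leaves to the reader.
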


\begin{proof}
The $E_2$ term of the Leray-Serre spectral sequence of the top fibration of diagram (\ref{orbifibrationexactseq}) is 
$$E^{p,q}_2=H^p(\mathsf{B}S^1,H^q(S^{2r+1}\times S^3_\bfw,\bbz))\approx \bbz[s]\otimes\grL[\gra,\grb],$$ 
where $\gra$ is a $3$-class and $\grb$ is a $2r+1$ class. By the Leray-Serre Theorem this converges to $H^{p+q}(M_{l_1,l_2,\bfw}^{2r+3},\bbz)$.
From the usual Hopf fibration and Lemma \ref{LS} the only non-zero differentials in the Leray-Serre spectral sequence of the bottom fibration in Diagram (\ref{orbifibrationexactseq}) are $d_4(\gra)=w_1w_2s^2_2$ and $d_{2r+2}(\grb)=s^{r+1}_1$. By naturality the differentials of the top fibration of (\ref{orbifibrationexactseq}) are $d_4(\gra)=w_1w_2(-l_1s)^2$ and $d_{2r+2}(\grb)=(l_2s)^{r+1}$. It follows that $H^{p}(M_{l_1,l_2,\bfw}^{2r+3},\bbz)$ has an element $x$ of degree $2$ with $w_1w_2l_1^2x^2$ vanishing, and since $l_2$ is relatively prime to $w_1w_2l_1^2$,  $x^p$ vanishes for $p\geq r+1$. Similarly, for dimensional reasons there is an element $y$ of degree $2r+1$ such that $y^2$ and $x^2y$ vanish.
\end{proof}

The connected component $\gA\gu\gt_0(M^{2r+3}_{l_1,l_2,\bfw})$ of the Sasaki automorphism group $\gA\gu\gt(M^{2r+3}_{l_1,l_2,\bfw})$ of the manifolds $M^{2r+3}_{l_1,l_2,\bfw}$ is $SU(r+1)\times T^2$. Hence, these manifolds are toric Sasaki manifolds. In fact, the join of toric Sasaki manifolds is a toric Sasaki manifold \cite{Boy10a}. However, our methods only make essential use of the 2-dimensional $\bfw$-subcone, not the full Sasaki cone. The manifolds $M^{2r+3}_{l_1,l_2,\bfw}$ are studied further in \cite{BoTo14b}.

\section{Admissible CSC constructions}\label{CSCconstruct}

We now pick up the thread from Section \ref{admissible} and describe the construction (see also
\cite{ACGT08}) of admissible K\"ahler metrics on $S_n$ (in fact, more generally on log pairs $(S_n, \Delta)$).
Consider the circle action
on $S_n$ induced by the natural circle action on $L_n$. It extends to a holomorphic
$\mathbb{C}^*$ action. The open and dense set ${S_n}_0\subset S_n$ of stable points with respect to the
latter action has the structure of a principal $\mathbb{C}^*$ bundle over the stable quotient.
The hermitian norm on the fibers induces via a Legendre transform a function
$\gz:{S_n}_0\rightarrow (-1,1)$ whose extension to $S_n$ consists of the critical manifolds
$\gz^{-1}(1)=P(\BOne\oplus 0)$ and $\gz^{-1}(-1)=P(0 \oplus L_n)$.
Letting $\theta$ be a connection one form for the Hermitian metric on ${S_n}_0$, with curvature
$d\theta = \omega_{N_n}$, an admissible K\"ahler metric and form on the base $S_n$ are
given up to scale by the respective formulas
\begin{equation}\label{g}
g=\frac{1+r\gz}{r}g_{N_n}+\frac {d\gz^2}
{\Theta (\gz)}+\Theta (\gz)\theta^2,\quad
\omega = \frac{1+r\gz}{r}\omega_{N_n} + d\gz \wedge
\theta,
\end{equation}
valid on ${S_n}_0$. Here $\Theta$ is a smooth function with domain containing
$(-1,1)$ and $r$, is a real number of the same sign as
$g_{N_n}$ and satisfying $0 < |r| < 1$. The complex structure yielding this
K\"ahler structure is given by the pullback of the base complex structure
along with the requirement $Jd\gz = \Theta \theta$. The function $\gz$ is hamiltonian
with $K= J\,grad\, \gz$ a Killing vector field. In fact, $\gz$ is the moment 
map on $S_n$ for the circle action, decomposing $S_n$ into 
the free orbits ${S_n}_0 = \gz^{-1}((-1,1))$ and the special orbits 
$D_1= \gz^{-1}(1)$ and $D_2=\gz^{-1}(-1)$.
Finally, $\theta$ satisfies $\theta(K)=1$.

\begin{remark}\label{hamiltonian2form}
Note that on $S_n$
$$\phi := \frac{-(1+r \gz)}{r^2} \omega_{N_n} + \gz d\gz \wedge \theta$$ is a Hamiltonian $2$-form of order one.
\end{remark}

\bigskip

We can now interpret $g$ as a metric on the log pair $(S_n,\Delta)$ with
$$\grD= (1-1/m_1)D_1+(1-1/m_2)D_2$$ if
$\Theta$ satisfies the positivity and boundary
conditions
\begin{equation}\label{positivity}
\begin{array}{l}
\Theta(\gz) > 0, \quad -1 < \gz <1,\\
\\
 \Theta(\pm 1) = 0,\\
 \\
 \Theta'(-1) = 2/m_2\quad \quad \Theta'(1)=-2/m_1.
\end{array}
\end{equation}

\begin{remark}
This construction is based on the symplectic viewpoint where different choices of $\Theta$ yields different complex structures all compatible with the same fixed symplectic form $\omega$. However, for each $\Theta$ there is an $S^1$-equivariant diffeomorphism pulling back $J$ to the original fixed complex structure on $S_n$ in such a way that the K\"ahler form of the new K\"ahler metric is in the same cohomology class as $\omega$ \cite{ACGT08}. Therefore, with all else fixed, we may view the set of the functions $\Theta$ satisfying \eqref{positivity} as parametrizing a family of K\"ahler metrics within the same K\"ahler class of $(S_n,\Delta)$.
\end{remark}

\bigskip

The K\"ahler class $\Omega_{\mathbf r} = [\omega]$ of an admissible metric is also called
{\it admissible} and is uniquely determined by the parameter
$r$, once the data associated with $S_n$ (i.e.
$d_N$, $s_{N_n}$, $g_{N_n}$ etc.) is fixed. In fact, up to scale
\begin{equation}\label{admKahclass}
 \Omega_{\mathbf r}  = [\omega_{N_n}]/r + 2 \pi PD[D_1+D_2],
 \end{equation}
where $PD$ denotes the Poincar\'e dual.
The number $r$,
together with the data associated with $S_n$ will be called {\it admissible data}.

Define a function $F(\gz)$ by the formula $\Theta(\gz)=F(\gz)/\gp(\gz)$, where
$\gp(\gz) =(1 + r \gz)^{d_{N}}$.
Since $\gp(\gz)$ is positive for $-1\leq \gz \leq1$, conditions
\eqref{positivity}
are equivalent to the following conditions on $F(\gz)$:
\begin{equation}
\label{positivityF}
\begin{array}{l}
F(\gz) > 0, \quad -1 < \gz <1,\\
\\
F(\pm 1) = 0,\\
\\
F'(- 1) = 2\,\gp(-1)/m_2 \quad \quad F'( 1) =-2\,\gp(1)/m_1.
\end{array}
\end{equation}

\subsection{The CSC condition}

From \cite{ApCaGa06} we have that the scalar curvature of an admissible metric  given by \eqref{g} equals
\begin{equation}\label{scal}
Scal =
\frac{2 d_N s_{N_n} r }{1+r\gz} - \frac{F''(\gz)}{\gp(\gz)}.
\end{equation}
Thus the CSC
condition is equivalent the 
ODE
\begin{equation} \label{CSCode}
F''(\gz) = \left(2 d_N s_{N_n} r - k (1 + r \gz) \right)(1 + r \gz)^{d_{N}-1},
\end{equation}
where $k$ is a constant (equal to $Scal$ when \eqref{CSCode} is solved). Notice that if \eqref{CSCode} has a solution such that the boundary conditions from \eqref{positivityF} holds, then it will also follow that $F(\gz) > 0$ for $-1 < \gz <1$ and thus all of \eqref{positivityF} is satisfied.
To see this, merely observe that since $(1 + r \gz)^{d_{N}-1}>0$ for $0<|r|<1$ and $-1<\gz<1$, then $F''(\gz)$ can change sign at most once over the interval $-1<\gz<1$. Together with this fact, the endpoint conditions rule out any possibility of $F(\gz)$ being zero for any $-1<\gz<1$.

Integrating and using the conditions of $F'(\pm 1)$ in \eqref{positivityF}, we immediately get that
\[ F'(\gz) =  \left(2  s_{N_n}  - k \frac{1}{r(d_{N}+1)} (1 + r \gz)\right)(1 + r \gz)^{d_{N}}+c,\]
where
\begin{equation}\label{whatcis}
c=  \frac{2 \left(1-r^2\right)^{d_{N}} (m_2 (1-r)+m_1 (1+r) -  2m_1 m_2 s_{N_n})}{m_1 m_2 \left((1+r)^{d_{N}+1}-(1-r)^{d_{N}+1}\right)}
\end{equation}
and 
\begin{equation}\label{whatkis}
k= \frac{2 (d_N+1) r \left(m_2 (1+r)^{d_N} (1+m_1 s_{N_n})-m_1 (1-r)^{d_N} (-1+m_2 s_{N_n})\right)}{m_1m_2 \left((1+r)^{d_N+1}-(1-r)^{d_N+1}\right)}.
\end{equation}

Now we have a solution to \eqref{CSCode}, namely
\[
F(\gz) = \int_{-1}^\gz \left(\left(2  s_{N_n}  - k \frac{1}{r(d_{N}+1)} (1 + r t)\right)(1 + r t)^{d_{N}}+c \right)\, d\gz,
\]
satisfying \eqref{positivityF} iff
\begin{equation}\label{CSCfinaleqn}
\int_{-1}^1 \left(\left(2  s_{N_n}  - k \frac{1}{r(d_{N}+1)} (1 + r \gz)\right)(1 + r \gz)^{d_{N}}+c \right)\, d\gz = 0.
\end{equation}

Now we integrate \eqref{CSCfinaleqn} to arrive at the equation

\begin{equation}\label{candkeqn}
\frac{2s_{N_n}\left( (1+r)^{d_N+1} - (1-r)^{d_N+1}\right)}{r(d_N+1)} - \frac{k\left( (1+r)^{d_N+2} - (1-r)^{d_N+2}\right)}{r^2(d_N+1)(d_N+2)} + 2c=0.
\end{equation}
Thus the existence of an admissible CSC K\"ahler metric on the log pair $(S_n,\Delta)$ correspond to solving all three equations
\eqref{whatcis}, \eqref{whatkis}, and \eqref{candkeqn}. 

\subsection{Extremal K\"ahler metrics}
If we generalize equation \eqref{CSCode} to the condition that $Scal$ from \eqref{scal} is a affine function of $\gz$, then we obtain the equation
\begin{equation}\label{extremalode}
F''(\gz) = (1+r \gz)^{d_N-1}(2d_Ns_{N_n} r + (\alpha\gz +\beta)(1+r\gz)),
\end{equation}
where $\alpha$ and $\beta$ are constants.
It is well known that this corresponds to extremal K\"ahler metrics (see e.g. \cite{ACGT08}).
Moreover, similarly to the smooth case, one easily sees (by integrating and solving for $A$ and $B$) that \eqref{extremalode} has a unique solution $F(\gz)$ satisfying the endpoint conditions of \eqref{positivityF}. 
Finally, if the K\"ahler form $\omega_N$ on $N$ is assumed to have positive scalar curvature, this polynomial $F(\gz)$ also satisfies the positivity condition of \eqref{positivityF} by the standard root-counting argument introduced by Hwang \cite{Hwa94} and Guan \cite{Gua95}. For completeness we give the root-counting argument for this special case: Assume for contradiction that the positivity condition of \eqref{positivityF} fails. Then, due to the endpoint conditions on $F(\gz)$, the function $F(\gz)$ has at least two relative maxima and at least one relative minimum 
inside the interval $(-1,1)$. Thus, in the interval $(-1,1)$,  the concavity of $F(\gz)$ changes at least twice, i.e. $F''(\gz)$ is zero at least twice. Since $(1+r \gz)^{d_N-1} >0$ for $-1<\gz<1$, we see that this implies that the second order polynomial \newline
$P(\gz) = (2d_Ns_{N_n} r + (\alpha\gz +\beta)(1+r\gz))$ has two roots inside $(-1,1)$ and further the concavity changes exactly twice. Thus $F(\gz)$  has two relative maxima at $\gz = a_1$ and $\gz = a_3$ and one relative minimum at $\gz=a_2$ such that
$-1<a_1<a_2<a_3<1$ and the roots $r_1,r_2$ of $P(\gz)$ are such that $a_1<r_1<a_2<r_2<a_3$. Moreover,
$P(a_1)<0$ and $P(a_3) <0$. Now we observe that $P(-1/r) = 2d_Ns_{N_n} r$ and thus if $s_{N_n}r \geq 0$, we see that $P(\gz)$ must have one more root in either $[-1/r, a_1)$ (if $r>0$) or $(a_3,-1/r]$ (if $r<0$). Obviously $P(\gz)$ cannot have three roots and so we have a contradiction. We conclude that  the positivity condition of\eqref{positivityF}  must be satisfied.

This yields the following proposition which also proves Theorem \ref{allextr} as we shall see below.

\begin{proposition}\label{extrexistence} Assume that the scalar curvature $s_N$ of $(N,\omega_N)$ is non-negative.
For any log pair $(S_n, \Delta)$, any admissible K\"ahler class on $S_n$ contains an admissible extremal metric which is smooth in the orbifold sense on $(S_n, \Delta)$.
\end{proposition}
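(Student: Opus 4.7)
The plan is to leverage the admissible K\"ahler framework already set up and reduce the existence of an extremal metric on $(S_n,\Delta)$ in any admissible class $\Omega_{\mathbf r}$ to solving the ODE \eqref{extremalode} subject to the boundary conditions in \eqref{positivityF}. The extremal condition means that the scalar curvature in \eqref{scal} is affine in $\gz$, and this rearranges into \eqref{extremalode} with parameters $\alpha,\beta$. First I would integrate twice to produce a polynomial $F(\gz)$ depending on $\alpha,\beta$ and two constants of integration; the four endpoint conditions $F(\pm 1)=0$ and $F'(\pm 1)=\mp 2\gp(\pm 1)/m_{1,2}$ then form a non-degenerate linear system in these four parameters, yielding a unique polynomial solution $F$.

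With uniqueness in hand, the next step is to verify strict positivity $F(\gz)>0$ on $(-1,1)$; this, combined with the boundary conditions, will deliver an orbifold-smooth admissible metric on $(S_n,\Delta)$ representing $\Omega_{\mathbf r}$. Here I would run the Hwang--Guan root-counting argument in the form sketched in the paragraph preceding the proposition. Failure of positivity, together with $F(\pm 1)=0$ and the signs $F'(-1)>0$, $F'(1)<0$, forces $F$ to have at least two interior local maxima and at least one interior local minimum, so $F''$ changes sign at least twice on $(-1,1)$. Since $(1+r\gz)^{d_N-1}>0$ there, the affine factor $P(\gz)=2d_Ns_{N_n}r+(\alpha\gz+\beta)(1+r\gz)$ must then possess two roots in $(-1,1)$, and the configuration of extrema forces $P$ to be $\le 0$ at the outer two.

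The final ingredient, and in my view the only delicate point, is translating the hypothesis $s_N\ge 0$ into the sign condition $s_{N_n}r\ge 0$ needed to finish the contradiction. By the conventions of Section \ref{admissible}, $r$ has the same sign as the K\"ahler metric $g_{N_n}=2n\pi g_N$, hence the sign of $n$; and since the scalar curvature of $\pm g_{N_n}$ is $\pm 2d_Ns_{N_n}$, the quantity $s_{N_n}$ likewise inherits the sign of $n$ when $s_N>0$ and vanishes when $s_N=0$. Either way $s_{N_n}r\ge 0$. Evaluating then gives $P(-1/r)=2d_Ns_{N_n}r\ge 0$, while $-1/r\notin(-1,1)$ since $0<|r|<1$. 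Splitting into the cases $r>0$ (where $-1/r<-1$ lies to the left of the outer maximum) and $r<0$ (where $-1/r>1$ lies to the right), in both situations this forces $P$ to have a third root outside $[-1,1]$ in addition to its two interior roots, contradicting $\deg P=2$. The only real obstacle I expect is the careful sign bookkeeping in these two sub-cases and the borderline $s_N=0$ case, where $-1/r$ is itself a root of $P$ and still produces a third root; this is precisely what distinguishes the present existence problem from the much more rigid CSC problem governed by \eqref{whatcis}--\eqref{candkeqn}, whose solvability is not automatic.
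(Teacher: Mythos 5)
Your proposal is correct and follows essentially the same route as the paper: the unique polynomial solution of \eqref{extremalode} subject to the endpoint conditions of \eqref{positivityF}, followed by the Hwang--Guan root-counting contradiction based on $P(-1/r)=2d_Ns_{N_n}r\ge 0$ together with the sign bookkeeping showing $s_N\ge 0$ implies $s_{N_n}r\ge 0$, which is exactly the argument the paper gives in the paragraph preceding the proposition. One cosmetic slip: the third root produced by the intermediate value theorem lies in $[-1/r,a_1)$ (resp.\ $(a_3,-1/r]$) and need not be outside $[-1,1]$, but it is still distinct from the two roots lying in $(a_1,a_3)$, so the contradiction with $\deg P\le 2$ stands.
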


\subsection{The Einstein Conditions}
A K\"ahler metric is KE if and only if
$$\rho - \lambda \omega=0$$ for some constant $\lambda$. From \cite{ApCaGa06} we have that the Ricci form of an admissible metric  given by \eqref{g} equals
\begin{equation}\label{rho}
\rho =
\rho_{N} - \frac{1}{2} d d^c \log F =s_{N_n}\omega_{N_n} - 
\frac{1}{2}\frac{F'(\gz)}{\gp(\gz)}  \omega_{N_n}
-\frac{1}{2}\Bigl(\frac{F'(\gz)}{\gp(\gz)}\Bigr)'(\gz) d\gz \wedge \theta.
\end{equation}
Thus the KE 
condition is equivalent the 
ODE
\begin{equation} \label{KEodes}
\frac{F'(\gz)}{\gp(\gz)}  = 2s_{N_n} - 2 \lambda 
(\gz + 1/r).
\end{equation}
Now \eqref{positivityF} implies the necessary conditions
$$ 
\begin{array}{ccl}
s_{N_n} -  \lambda 
(-1 + 1/r)& = & 1/m_2\\
\\
s_{N_n}-  \lambda 
(1 + 1/r)&=& -1/m_1,
\end{array}
$$
which are equivalent to
\begin{equation}\label{fano}
\begin{array}{ccl}
2\lambda& = & 1/m_2+1/m_1\\
\\
2s_{N_n}r &=& (1+r)/m_2 + (1-r)/m_1.
\end{array}
\end{equation}

Since $s_{N_n}r >0$ we see that the base manifold $N$ (not surprisingly) must have positive scalar curvature.
If  \eqref{fano} is satisfied, then
\eqref{KEodes} is equivalent to the ODE:
\begin{equation} \label{KEode}
\frac{F'(\gz)}{\gp(\gz)}  = (1-\gz)/m_2 -(1+\gz)/m_1
\end{equation}

Now it is easy to see that for a solution satisfying \eqref{positivityF} to exist we
need
\begin{equation}\label{KEintegral}
\int_{-1}^1 \left((1-\gz)/m_2 -(1+\gz)/m_1\right) {\gp(\gz)} d\gz = 0.
\end{equation}
On the other hand, if this is satisfied
\begin{equation}\label{KEmetricF}
F(\gz) := \int_{-1}^\gz \left((1-t)/m_2 -(1+t)/m_1\right) {\gp(t)} dt
\end{equation}
would yield a solution of \eqref{KEodes} satisfying all the conditions of \eqref{positivityF}. 
Setting $s_{N_n}=\cali_N/n$ in the second equation of \eqref{fano} we have the following result.
\begin{proposition}\label{KEprop} 
Given admissible data and a choice of $m_1,m_2$ as above the admissible metric  \eqref{g}, 
with $\Theta(\gz) = \frac{F(\gz)}{\gp(t)}$ and $F(\gz)$ given by \eqref{KEmetricF},
is KE iff  
$$2r\cali_N/n = (1+r)/m_2 + (1-r)/m_1$$
and \eqref{KEintegral} are both satisfied.
\end{proposition}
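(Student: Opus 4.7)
The plan is to reduce the KE equation $\rho = \lambda\omega$, via the admissible formulas \eqref{g} and \eqref{rho}, to a single ODE in $F(\gz)$, impose the boundary conditions of \eqref{positivityF}, and then check that the resulting integrability constraint is exactly \eqref{KEintegral}. Most pieces are already assembled in the text, so the argument is essentially a repackaging.

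First I would match coefficients in $\rho - \lambda\omega = 0$ using \eqref{g} and \eqref{rho}: the $\omega_{N_n}$-component yields the ODE \eqref{KEodes}, while the $d\gz\wedge\theta$-component is its derivative and is therefore automatic. Next, evaluating \eqref{KEodes} at $\gz = \pm 1$ and using the endpoint values of $F'$ from \eqref{positivityF} produces the two linear equations \eqref{fano}. Using Remark \ref{sNn} to substitute $s_{N_n} = \cali_N/n$ turns the second equation of \eqref{fano} into the hypothesis $2r\cali_N/n = (1+r)/m_2 + (1-r)/m_1$; the first equation of \eqref{fano} merely fixes the Einstein constant $\lambda$ in terms of $m_1, m_2$ and imposes no additional constraint.

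Then I would substitute $2\lambda = 1/m_1+1/m_2$ back into \eqref{KEodes}; a short algebraic simplification, using the just-derived expression for $2s_{N_n}r$, collapses the right-hand side to $(1-\gz)/m_2 - (1+\gz)/m_1$, which is \eqref{KEode}. Integrating from $-1$ with $F(-1) = 0$ produces the candidate \eqref{KEmetricF}, and requiring $F(1) = 0$ is precisely \eqref{KEintegral}. Conversely, when both conditions hold, the function defined by \eqref{KEmetricF} satisfies the vanishing and derivative boundary conditions in \eqref{positivityF}, and the resulting admissible metric is KE by construction.

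Finally, I would verify $F > 0$ on $(-1,1)$ by noting that the integrand $((1-t)/m_2 - (1+t)/m_1)\gp(t)$ has $\gp(t) > 0$ and a linear factor that changes sign exactly once inside $(-1,1)$. Hence $F'$ has a unique interior zero, so $F$ attains a single interior maximum and, combined with $F(\pm 1) = 0$, this forces $F > 0$ throughout $(-1,1)$. The only real obstacle is the bookkeeping that reduces \eqref{KEodes} to \eqref{KEode} once $\lambda$ is eliminated, together with sign tracking at the two endpoints; no new analytic input is required beyond what has already been developed in the preceding discussion.
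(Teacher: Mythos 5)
Your argument is correct and follows essentially the same route as the paper: reduce $\rho=\lambda\omega$ to the ODE \eqref{KEodes}, evaluate at $\gz=\pm 1$ against the endpoint conditions of \eqref{positivityF} to obtain \eqref{fano} (whose second equation becomes the stated condition via $s_{N_n}=\cali_N/n$), eliminate $\lambda$ to get \eqref{KEode}, and integrate so that $F(1)=0$ is exactly \eqref{KEintegral}. Your explicit check that $F>0$ on $(-1,1)$ (the integrand's linear factor changes sign once, so $F$ has a single interior maximum) is a detail the paper leaves implicit, but it is consistent with its general root-counting remarks.
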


\section{CSC and Extremal Rays}
In order to finish the proofs of Theorem \ref{admjoincsc} and Theorem \ref{allextr}, we now connect the K\"ahler geometry of Section \ref{CSCconstruct} with the Sasaki geometry of Section \ref{thejoin}. Assume $M_{l_1,l_2,\bfw}=M\star_{l_1,l_2}S^3_\bfw$ is the join as described in the beginning of the Section \ref{thejoin} with the induced contact structure $\cald_{l_1,l_2,\bfw}$, and now we assume that $w_1>w_2$. Let $\bfv=(v_1,v_2)$ be a weight vector with relatively prime integer components and let $\xi_\bfv$ be the corresponding Reeb vector field in the $\bfw$-Sasaki cone $\gt^+_\bfw$. Let the log pair $(S_n,\grD)$ be
the quotient of $M_{l_1,l_2,\bfw}$ by the flow of the Reeb vector field $\xi_\bfv$.
Using Theorem \ref{preSE} we have
$m_i=v_i\frac{l_2}{s}$ and
$n=l_1\bigl(\frac{w_1v_2-w_2v_1}{s}\bigr)$, where $s=\gcd(|w_2v_1-w_1v_2|,l_2)$.
Writing $[\omega_{N_n}] = 2\pi n p_\bfv^*[\omega_N]$ and using that \newline
$$PD[D_1+D_2] = 2PD(D_1) - PD(D_1-D_2) = 2PD(D_1) - n[\omega_N],$$ 
we see that \eqref{admKahclass} can be re-written to
$$ \Omega_{\mathbf r}  =4\pi\left(\frac{n (1- r)}{2r} [\omega_{N}] +  PD(D_1)\right)$$
and so $[\omega_B]$ given by Lemma \ref{l2k2} is indeed admissible, where $r$ is such that
$$\frac{n (1- r)}{2r}  = k_1/k_2 = m_1 l_1 w_2/l_2$$
which gives
\begin{equation}\label{eqnr} 
r= \frac{w_1v_2-w_2v_1}{w_1v_2+w_2v_1},
\end{equation}
and 
\begin{equation}\label{admKahclass2}
 \Omega_{\mathbf r}  =4\pi\left(\frac{k_1}{l_2}[\gro_N] +PD(D_1)\right)= \frac{4\pi}{l_2}\left(k_1[\gro_N] +k_2PD(D_1)\right)=\frac{4\pi}{l_2}[\gro_B] .
\end{equation}

For a description of extremal Sasaki metrics we refer the reader to \cite{BGS06} and Section 4.4 of \cite{BoTo13}.

\subsection{Lifting the Admissible Data}\label{adsas}
We now want to lift the admissibility conditions on $(S_n,\grD)$ to $M_{\bfl,\bfw}$ using Theorem \ref{preSE}, and we need to determine the scale factor involved in this lifting. Let $M_0$ denote the dense open subspace of $M_{l_1,l_2,\bfw}$ defined by the condition $z_1z_2\neq 0$, and let $Z_i$ be the submanifolds of $M_{\bfl,\bfw}$ defined by setting $z_{i+1}=0$ with $i=1,2\mod 2$. This gives a stratification 
\begin{equation}\label{MZ}
M_{l_1,l_2,\bfw}  =M_0\sqcup Z_1\sqcup Z_2.
\end{equation}
It is easy to see that 
\begin{lemma}\label{ZD}
For each pair of relatively prime positive integers $v_1,v_2$ the dense open submanifold $M_0$ is the total space of an $S^1$-bundle over $S_{n0}$ and $Z_i=\pi_\bfv^{-1}(D_i)$ is independent of $\bfv$ and $n$.
\end{lemma}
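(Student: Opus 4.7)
The plan is to read both assertions off the $T^2$-action in diagram \eqref{comdia2} and Equation \eqref{T2action3}, using the orbit-structure analysis already established in Lemma \ref{generic3} and Theorem \ref{preSE}. First I would match up the stratification \eqref{MZ} of $M_{l_1,l_2,\bfw}$ with the divisor stratification of $(S_n,\grD)$: by Equation \eqref{T2action5}, the locus $\{z_2=0\}$ on $M\times L(l_2;l_1w_1,l_1w_2)$ descends to the zero section $\BOne\oplus 0=D_1$, while $\{z_1=0\}$ descends to the infinity section $0\oplus L_n=D_2$, exactly as identified in Theorem \ref{preSE}. Since the loci $\{z_{i+1}=0\}$ are preserved by every element of the $T^2$ generated by $L_{l_1,l_2,\bfw}$ and $\xi_\bfv$, they descend to well-defined submanifolds after taking the $\pi_L$-quotient, and their complement in $M_{l_1,l_2,\bfw}$ is exactly $M_0$, which therefore maps into $S_{n0}=S_n\setminus(D_1\cup D_2)$ under $\pi_\bfv$.

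For the $S^1$-bundle statement, I would invoke Lemma \ref{generic3}: on the open subset $Z=\{z_1z_2\neq 0\}\subset M\times L(l_2;l_1w_1,l_1w_2)$ (which is precisely the $\pi_L$-preimage of $M_0$) the $S^1_\phi$-action has common minimal period $\frac{2\pi}{s}$, so that the residual circle $S^1_\phi/\bbz_s$ acts freely. Passing to the join quotient, the induced circle action on $M_0$ generated by $\xi_\bfv$ has constant orbit length and trivial isotropy after quotienting by $\bbz_s$, so the restricted projection $\pi_\bfv|_{M_0}\colon M_0\to S_{n0}$ is a smooth principal $S^1$-bundle.

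For the independence statement, observe that the contact manifold $M_{l_1,l_2,\bfw}$ and its stratification \eqref{MZ} are defined purely in terms of the join data $(l_1,l_2,\bfw)$: the submanifolds $Z_i=\{z_{i+1}=0\}$ are cut out by conditions on the $S^3_\bfw$-factor that are invariant under the $S^1$ generated by $L_{l_1,l_2,\bfw}$, and neither the Reeb choice $\xi_\bfv$ nor the integer $n=l_1(w_1v_2-w_2v_1)/s$ enters their definition. The identity $Z_i=\pi_\bfv^{-1}(D_i)$ then follows from the saturation observation above, since $\pi_\bfv(Z_i)=D_i$ by construction and the $Z_i$ are $\pi_\bfv$-saturated. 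The only (mild) obstacle is to keep the two-stage quotient $\pi_L,\pi_\bfv$ and the $\bbz_s$-isotropy book-keeping straight; once those are organized no additional computation is required.
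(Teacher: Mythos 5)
Your proposal is correct, and it follows the route the paper clearly intends: the paper itself offers no argument beyond ``It is easy to see that,'' and the natural verification is exactly what you do --- read the statement off the $T^2$-action \eqref{T2action3}, use the freeness of $S^1_\phi/\bbz_s$ on $\{z_1z_2\neq 0\}$ from Lemma \ref{generic3} for the principal $S^1$-bundle claim, and note that the $Z_i$ are cut out by $\bfv$-independent, $\pi_\bfv$-saturated conditions for the identification $Z_i=\pi_\bfv^{-1}(D_i)$. No gaps.
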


This lemma says that although the quotient spaces of different Reeb vector fields in the Sasaki cone may be quite different even topologically, their lifted geometry on $M_{l_1,l_2,\bfw}$ is similar. 

Now Theorem \ref{preSE} shows that the quotient space of $M_{l_1,l_2,\bfw}$ by the circle action generated by the quasi-regular Reeb vector field $\xi_\bfv$ is a ruled projective algebraic orbifold given as the log pair $(S_n,\grD)$; however, although there is a specific Sasakian structure $\cals_\bfv$ on $M_{l_1,l_2,\bfw}$ the theorem does not specify the K\"ahler structure on $(S_n,\grD)$. It is now our purpose to do so, and relate it to $\cals_\bfv$.

\begin{proposition}\label{screl}
Let $v_1,v_2$ be relatively prime positive integers, and consider the Sasakian structure $\cals_\bfv=(\xi_\bfv,\eta_\bfv,\Phi,g_\bfv)$ on $M_{l_1,l_2,\bfw}$. Then the induced K\"ahler structure $(g_B,\gro_B)$ on $(S_n,\grD)$ satisfies
$$g^T =\frac{l_2}{4\pi}\frac{\pi_\bfv^* g}{mv_1v_2}=\frac{\pi_\bfv^* g_B}{mv_1v_2},\qquad d\eta_\bfv=\frac{l_2}{4\pi} \frac{\pi_\bfv^*\gro}{mv_1v_2} =\frac{\pi_\bfv^*\gro_B}{mv_1v_2}=\gro^T$$
where $g^T=d\eta\circ (\BOne\otimes \Phi)$.
\end{proposition}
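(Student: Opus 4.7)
The approach is to reduce the entire proposition to the single form identity
\[
d\eta_\bfv \;=\; \frac{\pi_\bfv^*\gro_B}{m v_1 v_2};
\]
the accompanying metric identity then follows automatically because both $g^T$ and $\pi_\bfv^* g_B$ are obtained from their respective K\"ahler forms via the same (transverse) complex structure, which by Theorem \ref{preSE} is the lift of the complex structure on $(S_n,\grD)$. The middle equalities in the two displayed identities of the proposition simply record the class-level scaling $\gro_B = (l_2/4\pi)\gro$ and $g_B = (l_2/4\pi)g$, which is forced by $\Omega_{\mathbf r} = (4\pi/l_2)[\gro_B]$ from \eqref{admKahclass2} together with the choice of admissible representative from Section \ref{CSCconstruct}.

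To establish the form identity I would first note that $d\eta_\bfv$ is basic for the characteristic foliation of $\xi_\bfv$ (since $\iota_{\xi_\bfv}d\eta_\bfv = 0$ and $\call_{\xi_\bfv}d\eta_\bfv = 0$), so it descends to a closed $(1,1)$-form $\tilde{\gro}$ on $(S_n,\grD)$ with $\pi_\bfv^*\tilde{\gro} = d\eta_\bfv$ that is K\"ahler with respect to the induced complex structure. Since $d\eta_\bfv$ is exact on $M_{l_1,l_2,\bfw}$, the class $[\tilde{\gro}]$ lies in $\ker\pi_\bfv^*\subset H^2(S_n,\bbr)$; the explicit computations $\pi_\bfv^*p_\bfv^*[\gro_N] = l_2\grg$, $\pi_\bfv^*PD(D_1) = -m_1 l_1 w_2\grg$, and $\pi_\bfv^*PD(D_2) = -m_2 l_1 w_1\grg$ carried out in the proof of Lemma \ref{l2k2} show that this kernel is one-dimensional and spanned precisely by $[\gro_B]$. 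Hence $[\tilde{\gro}] = c[\gro_B]$ for a positive constant $c$, and rigidity of the admissible ansatz (uniqueness of the $S^1$-invariant K\"ahler representative within the fixed ansatz of Section \ref{CSCconstruct}) upgrades this cohomological identity to the pointwise identity $\tilde{\gro} = c\,\gro_B$.

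To compute $c = 1/(mv_1v_2)$, I would use the commutative diagram \eqref{actcomdia} and the biholomorphism $h_\bfv$ of Lemma \ref{biholo} to reduce to the almost regular case $\bfv = (1,1)$ via the $mv_1v_2$-fold covering $\th_\bfv$, where the constant reduces to $1/m$. This latter constant can be verified either directly, using the generic period $2\pi/s$ from Lemma \ref{generic3} together with the explicit generators in \eqref{Lvec}--\eqref{Reebjoin}, or cohomologically, by pairing both sides of the identity with suitable 2-cycles using the identities listed above. The main obstacle will be the careful bookkeeping in this last step: the ramification product $m_1 m_2 = m^2 v_1 v_2$ combined with the residual $m$-fold quotient is what yields the factor $mv_1v_2$, and one must ensure that this factor lands on the correct side of the identity rather than being absorbed into the normalizations of $\eta_\bfv$ or into the biholomorphism $h_\bfv$.
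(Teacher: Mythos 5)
Your route is genuinely different from the paper's, and I want to flag both the difference and one step that does not hold up as stated. The paper makes no cohomological detour and no covering argument: it simply writes $\pi_\bfv^*\gro_B=b\,d\eta_\bfv$ and pins down $b$ by comparing the coefficient of the pullback of $\gro_N$ on the two sides. On one side, restricting the admissible form \eqref{g} to the zero section gives $\gro_B|_N=\frac{l_2n}{2}(r^{-1}+\gz)\gro_N$. On the other, the relation $q_\bfv\eta_\bfv=q_\bfw\eta_\bfw$ with $q_\bfv=v_1|z_1|^2+v_2|z_2|^2$ gives $d\eta_\bfv|_{\cald}=q_\bfv^{-1}q_\bfw\,d\eta_\bfw|_{\cald}$, and expressing $q_\bfv^{-1}q_\bfw$ in terms of the lifted moment map $\gtz$ via \eqref{eqnr} yields $\frac{w_1v_2-w_2v_1}{2v_1v_2}(\gtz+r^{-1})$; since the $\gro_N$-coefficient of $d\eta_\bfw$ is $l_1$ and $n=l_1(w_1v_2-w_2v_1)/s$, the ratio of coefficients is $b=\frac{l_2}{s}v_1v_2=mv_1v_2$ in one line. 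This is considerably more economical than your reduction to the almost regular case through the $mv_1v_2$-fold cover $\th_\bfv$ followed by a period computation, and it sidesteps exactly the bookkeeping you acknowledge as the main obstacle.

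The step I would not accept as written is the ``rigidity'' upgrade from $[\tilde{\gro}]=c[\gro_B]$ to $\tilde{\gro}=c\,\gro_B$. There is no uniqueness of the $S^1$-invariant K\"ahler representative in a fixed class: the paper's remark following \eqref{positivity} stresses that an admissible class carries a whole family of admissible metrics parametrized by $\Theta$, and more generally an invariant representative is only determined up to $i\partial\bar{\partial}\varphi$ for an invariant $\varphi$ (cf.\ Remark \ref{isorem}). So ``cohomologous and invariant'' does not give pointwise proportionality. What actually forces the descended form to be (a multiple of) the admissible representative is an explicit verification that it has the admissible shape --- this is precisely the content of the formula \eqref{gT2} in Lemma \ref{vfam}, or equivalently of the $q_\bfv^{-1}q_\bfw$ computation above. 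You need that verification somewhere in your argument; and once you have it, it already hands you the constant, making both the cohomological step and the covering reduction unnecessary.
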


\begin{proof}
For any $\xi_\bfv\in \gt_\bfw^+$ define the quadratic form $q_\bfv=v_1|z_1|^2+v_2|z_2|^2$. Then the Sasakian structure $\cals_\bfv$ is related to reducible Sasakian structure $\cals_\bfw$ by $q_\bfv\eta_\bfv=q_\bfw\eta_\bfw$. This gives the relation between the transverse K\"ahler forms,
\begin{equation}\label{dvw}
d\eta_\bfv|_\cald = q_\bfv^{-1}q_\bfw d\eta_\bfw|_\cald.
\end{equation}
Now choose coordinates on $S^3$ so that $q_\bfv=v_1|z_1|^2+v_2|z_2|^2=2\grk$ with $\grk\in \bbr^+$. 
Let $\gtz:M_{l_1,l_2,\bfw}\ra{1.6} [-1,1]$ be the moment map of the lifted circle action of the moment map $\gz$. Then Lemma \ref{ZD} implies 
$$\gtz= \frac{\grk-v_2|z_2|^2}{\grk} = \frac{v_1|z_1|^2 - \grk}{\grk}$$
which gives
$$|z_1|^2 = (\grk\gtz+\grk)/v_1\qquad |z_2|^2 = (\grk-\grk\gtz)/v_2.$$
This gives, using Equation \eqref{eqnr}
\begin{equation}\label{qvqwrel}
q_\bfv^{-1}q_\bfw=\frac{(w_1v_2-w_2v_1)\gtz+w_1v_2+w_2v_1}{2v_1v_2}=\frac{w_1v_2-w_2v_1}{2v_1v_2}(\gtz+r^{-1}).
\end{equation}

Now by Equation \eqref{admKahclass2} the K\"ahler form $(4\pi/l_2)\gro_B$ is in the admissible class $\grO_\bfr$ and we choose it to be admissible. So $(4\pi/l_2)\gro_B=\gro$. Thus, pulling back and using Equation \eqref{g} we have by identifying $N$ with the zero section of $S_n=\bbp(\BOne \oplus L_n)$,
\begin{equation}\label{BN}
\gro_B|_N= \frac{l_2n}{2}(r^{-1}+\gz)\gro_N.
\end{equation}

Thus, to find the scale factor we write 
\begin{equation}\label{sceqn}
\pi_\bfv^*\gro_B=bd\eta_\bfv
\end{equation}
for some positive constant $b$. Now from the commutative Diagram \eqref{t3comdia} we have $\pi_\bfw^*\gro_N=\pi_\bfv^*\gro_N$. To find $b$ it is enough to compare coifficients of the pullback of $\gro_N$ on both sides of Equation  \eqref{sceqn}. Using Equations \eqref{dvw}-\eqref{BN} together with the equation for $n$ in Theorem \ref{preSE} gives $b=mv_1v_2$.
\end{proof}

\begin{remark}\label{coverrem}
The factor $mv_1v_2$ can be thought of as arising from the multiple cover argument in Diagram \ref{actcomdia} which occurs on Sasakian level as well.
\end{remark}

Proposition \ref{screl} allows us to consider the Sasakian structure $\cals_\bfv$ as an {\it admissible Sasakian structure}. We simply view $\Theta$ as a function of the lifted moment map $\gtz$. This function $\Theta(\gtz)$ satisfies the positivity conditions and boundary conditions of Equation \eqref{positivity}. We then get a Sasaki metric in the usual way, namely $g_\bfv=g^T+\eta_\bfv\otimes \eta_\bfv$ together with its full Sasakian structure $\cals_\bfv=(\xi_\bfv,\eta_\bfv,\Phi_\bfv,g_\bfv)$. Although this construction was done for a pair of relatively prime positive integers $v_1,v_2$ we can extend this to the entire ray by applying a transverse homothety $(\xi,\eta)\mapsto (a^{-1}\xi,a\eta)$ which implies the following scaling of the admissible data:
$$\theta\mapsto a^{-1}\theta,\qquad \Theta\mapsto a\Theta,\qquad m_i\mapsto a^{-1}m_i,$$
and $m$ is scale invariant. This defines the Sasaki admissible data for all quasi-regular Reeb vector fields. 

We now wish to extend the concept of admissible Sasaki data to the irregular case. For this we consider the components $v_1,v_2$ of $\bfv$ to be any positive real numbers. We shall assume that the function $\Theta$ of Section \ref{CSCconstruct} is chosen such that $m\Theta$ is independent of $m$ and varies smoothly with $v_1,v_2$. As we shall see later this is the case for any (quasi-regular) extremal Sasakian structure. We need 
\begin{lemma}\label{vfam}
Let $v_1,v_2\in \bbr^+$. Then the family of transverse K\"ahler metrics and forms of Proposition \ref{screl} vary smoothly with $\bfv=(v_1,v_2)$.
\end{lemma}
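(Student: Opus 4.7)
The plan is to verify directly that, although the quotient data $(S_n,\Delta)$ from Theorem~\ref{preSE} only make sense for rational $\bfv$, every arithmetic quantity appearing in the lifted admissible formulas ($s$, $m$, $n$, $m_1$, $m_2$) either cancels or recombines into expressions depending only on $v_1,v_2$, and that the resulting formulas extend smoothly to all of $(v_1,v_2)\in(\bbr^+)^2$.

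First I would record the ingredients that are manifestly smooth in $\bfv$ and intrinsic to $M_{l_1,l_2,\bfw}$. From the proof of Proposition~\ref{screl}, the lifted moment map satisfies
\[
\gtz=\frac{v_1|z_1|^2-v_2|z_2|^2}{v_1|z_1|^2+v_2|z_2|^2},
\]
which is well-defined and smooth in $\bfv$ on $(\bbr^+)^2$, and the stratification $M_{l_1,l_2,\bfw}=M_0\sqcup Z_1\sqcup Z_2$ of Lemma~\ref{ZD} is $\bfv$-independent. The admissibility parameter $r(\bfv)=(w_1v_2-w_2v_1)/(w_1v_2+w_2v_1)$ is smooth, and the combination $g_{N_n}/m=(2\pi l_1(w_1v_2-w_2v_1)/l_2)\,g_N$ absorbs the arithmetic pair $(n,m)$ into a single smooth expression. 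Writing $\widetilde\Theta:=m\,\Theta$ converts the boundary conditions in~\eqref{positivity} into $\widetilde\Theta'(-1)=2/v_2$ and $\widetilde\Theta'(1)=-2/v_1$; by the standing assumption preceding the lemma, $\widetilde\Theta$ is $m$-independent and smooth in $\bfv$. Substituting into~\eqref{g} and dividing by $mv_1v_2$ as in Proposition~\ref{screl} then produces explicit formulas for $g^T_\bfv$ and $d\eta_\bfv=\omega^T_\bfv$ whose coefficient functions are smooth in $\bfv$.

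The main obstacle I anticipate is twofold: showing that the lifted connection one-form $\widehat\theta$ is itself smooth (indeed essentially $\bfv$-independent) in $\bfv$, and showing smooth behaviour at the boundary strata $Z_1,Z_2$. For the first, I would use the $T^2$-action~\eqref{T2action3} and the commutative diagrams~\eqref{actcomdia}--\eqref{t3comdia} to express $\widehat\theta$ on the dense open $M_0$ in terms of $\eta_1$ pulled back from $M$ and the angular coordinate on $S^3_\bfw$ dual to $\xi_\bfv$, yielding smoothness in $\bfv$ up to a manifestly smooth rescaling. For the second, the standard admissible regularity analysis (cf.~\cite{ACGT08,ApCaGa06}) near $D_i$ expresses $\widetilde\Theta(\gtz)\,\widehat\theta^{\,2}$ and $d\gtz^2/\widetilde\Theta(\gtz)$ in polar coordinates transverse to $Z_i$, where the leading behaviour is governed by $\widetilde\Theta'(\pm1)=\mp 2/v_i$ together with the higher Taylor coefficients of $\widetilde\Theta$; since these vary smoothly with $\bfv$ by assumption, the tensors extend to a smooth family across $Z_1\cup Z_2$. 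Combining the smoothness on $M_0$ with this smooth extension across the boundary strata yields the lemma.
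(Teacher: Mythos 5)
Your proposal is essentially correct, and on the dense open set $M_0$ it follows the same route as the paper: isolate the $\bfv$-dependence into the smooth parameter $r(\bfv)$, the rescaled profile function $m\Theta$ (which is smooth in $\bfv$ by the standing hypothesis), and the lifted connection one-form, and observe that $\pi_\bfv^*g_N=\pi_\bfw^*g_N$ is $\bfv$-independent. The one place you are vaguer than the paper is the connection form: the paper pins this down explicitly by trivializing $M_0\approx T^2\times(-\grk,\grk)\times N$, identifying the lift of $K$ with $H_1$ via $\pi_{\bfv*}H_1=mv_1v_2K$, and writing $\ttheta=d\varphi_1-\frac{v_1}{v_2}d\varphi_2+A$ with $dA$ a $\bfv$-independent multiple of $\pi_\bfv^*\gro_N$ up to the smooth scalar $\frac{2\pi l_1(w_1v_2-w_2v_1)}{l_2v_1v_2}$; your sketch in terms of $\eta_1$ and the angular coordinate dual to $\xi_\bfv$ is the same idea but would need to be made this explicit to close the argument. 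Where you genuinely diverge is at the boundary strata $Z_1\sqcup Z_2$: the paper first notes that each quasi-regular admissible structure extends smoothly (as in the K\"ahler case), and then handles irregular $\bfv$ by approximating with quasi-regular Reeb fields via Theorem 7.1.10 of \cite{BG05} and passing to the limit, whereas you propose a direct polar-coordinate analysis near $D_i$ driven by the boundary conditions $\tTheta(\pm1)=0$ and the values of $\tTheta'(\pm1)$, arguing that the Taylor data varies smoothly in $\bfv$. Your route is more self-contained and arguably gives stronger, uniform control (a pointwise limit argument by itself does not immediately yield smoothness of the limiting tensor or of the family), at the cost of having to carry out the local compactification computation with $\bfv$ as a parameter; the paper's route is shorter because it leverages the already-established quasi-regular case and the density of rational rays.
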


\begin{proof}
It is convenient to rewrite the transverse metric $g^T$ and K\"ahler form $\gro^T$ of Proposition \ref{screl} on the dense open set $M_0$ in the form
\begin{equation}\label{gT2}
g^T=\frac{l_2}{4\pi}\Bigl(\frac{2\pi l_1(w_1v_2-w_2v_1)}{l_2v_1v_2}(r^{-1}+\gtz)\pi_\bfv^*g_N +\frac{d\gtz^2}{\tilde{\Theta}(\gtz)} +\tilde{\Theta}(\gtz)\ttheta^2\Bigr)
\end{equation}
$$\gro^T=\frac{l_2}{4\pi}\Bigl(\frac{2\pi l_1(w_1v_2-w_2v_1)}{l_2v_1v_2}(r^{-1}+\gtz)\pi_\bfv^*\gro_N +d\gtz\wedge \ttheta\Bigr)$$
where $\tTheta=mv_1v_2\pi_\bfv^*\Theta$ and $\ttheta=\frac{\pi_\bfv^*\theta}{mv_1v_2}$. Note that $\tTheta$ satisfies the boundary conditions $\tTheta(\pm 1)=0,$ $\tTheta'(-1)=2v_1$ and $\tTheta'(1)=-2v_2.$ We will ignore the term $l_2/4\pi$ and consider the terms in brackets as our admissible data.

We claim that  we can interpret Equation \eqref{gT2} as a family of transverse K\"ahler metrics and forms that varies smoothly with $\bfv$. First from the commutative Diagram \eqref{t3comdia} we see that $\pi_\bfv^*g_N=\pi_\bfw^*g_N$, so the term $\pi_\bfv^*g_N$ is independent of $\bfv$.

So to show that the family is smooth on $M_0$ we only need to show $\ttheta$ is a family of 1-forms on $M_0$ that varies smoothly with $\bfv$. Since on $M_0$ we have coordinates induced by $z_1,z_2$ such that 
$$v_1|z_1|^2= \grk+\gre,\qquad v_2|z_2|^2= \grk-\gre$$
where $-\grk<\gre<\grk$. This trivializes $M_0$ as $M_0\approx T^2\times (-\grk,\grk)\times N$ as well as $S_{n0}\approx S^1\times (-\grk,\grk)\times N$. Now the Hamiltonian vector field $K$ vanishes nowhere on $S_{n0}$ and lifts to a vector field on $M_0$. Choosing $\grk=v_1$ we see that this vector field is $H_1$ with moment map $\gtz$ and satisfies $\pi_{\bfv *}H_1=mv_1v_2K$ (cf. Remark \ref{coverrem}).  Since $\ttheta$ is a pullback we have $\ttheta(\xi_\bfv)=0$ implying that $\ttheta(H_2)=-v_1/v_2$. Moreover, since both $H_1$ and $H_2$ are nowhere vanishing on $M_0$ we have coordinates $\varphi_1$ and $\varphi_2$ such that $\ttheta=d\varphi_1 -\frac{v_1}{v_2}d\varphi_2 +A$ where the $A$ is a 1-form on $N$ satisfying 
$$dA=\frac{2\pi l_1(w_1v_2-w_2v_1)}{l_2v_1v_2}\pi_\bfv^*\gro_N.$$ 
Since $\pi_\bfv^*\gro_N$ is independent of $\bfv$, this shows that $\ttheta$ depends smoothly on $\bfv $ on $M_0$.

As in the K\"ahler case the admissible quasi-regular Sasaki structures smoothly extend to the boundary $Z_1\sqcup Z_2$ with the indicated boundary conditions. Moreover, any irregular Sasakian structure $\cals_\bfv$ in the $\bfw$-Sasaki cone can be represented as a limit of quasi-regular structures by Theorem 7.1.10 of \cite{BG05} which from the above can be taken to be admissible. Hence, by continuity the irregular admissible structures on $M_0$ extend to the boundary as well.
\end{proof}

\begin{remark}\label{isorem}
Beginning from a K\"ahler class $\grO_r$ of Equation \eqref{admKahclass} we obtain an admissible K\"ahler form within the K\"ahler class by performing a deformation of the form $\gro\mapsto \gro +i\partial\bar{\partial}\varphi$ where the function $\varphi$ is invariant under the Hamiltonian circle action. This is equivalent on the Sasaki level to a deformation of the contact structure of the form $\eta\mapsto \eta +\zeta$ where $\zeta$ is a basic 1-form that is invariant under the lifted Hamiltonian circle action. Once this is done for a fixed $\bfv$ we see from the discussion above that it holds for all $\bfv$.
\end{remark}

\begin{remark}\label{rayremark}
It is convenient to consider the space of rays of the $\bfw$-Sasaki cone. 
We let $\gR_\bfw$ denote the {\it space of rays} in $\gt_\bfw^+$ and the ray defined by the vector $\bfv$ by $\bar{\bfv}$. By mapping a ray $\bar{\bfv}\in\gt_\bfw^+$ to its slope $v_2/v_1$ gives a homeomorphism of $\gR_\bfw$ with the open interval $(0,\infty)$. It follows from Equation 7.3.12 of \cite{BG05} that under the transverse homothety $(\xi,\eta)\mapsto (a^{-1}\xi,a\eta)$ extremality as well as constant scalar curvature are preserved. Thus, being extremal or CSC is a property of rays and descends to $\gR_\bfw$. Let $\gR^{\gr\ga\gt}_\bfw$ denote the subset of {\it rational rays}, that is, those rays with rational slope. By Theorem 7.1.10 of \cite{BG05}, $\gR^{\gr\ga\gt}_\bfw$ is dense in $\gR_\bfw$. Moreover, for every rational ray there is a unique pair of relatively prime positive integers $v_1,v_2$. So by Theorem \ref{preSE} there is unique log pair $(S_n,\grD)$ associated to the ray $\gr\in\gR^{\gr\ga\gt}_\bfw$.
\end{remark}
 
\subsection{Applying the Admissible Sasaki Data}

For a choice of co-prime integers $(v_1,v_2) \neq (w_1,w_2)$ and the values of $m_i$, $n$, and $r$ given as above, we recall that the metric \eqref{g} is extremal when $\Theta(\gz)$, satisfying the boundary conditions \eqref{positivity}, is such that when $\Theta(\gz)=F(\gz)/\gp(\gz)$, then 
$F(\gz)$ satisfies the ODE \eqref{extremalode}. The constants $\alpha$ and $\beta$ are uniquely determined from this ODE and the boundary conditions.

Now we are setting $s_{N_n} = A/n = \frac{ A s}{l_1(w_1v_2-w_2v_1)}$, where, by Remark \ref{sNn}, $A \leq d_N+1$. In any case, $A$ depends solely on $(N,g_N, \omega_N)$.  (If $\omega_N$ is K\"ahler-Einstein, $A$ is just $\cali_N$ as introduced in Remark \ref{sNn}). As a consequence, since $m=l_2/s$, $m s_{N_n}$ depends only on the join data and the choice of $(v_1,v_2)$. Thus the function $m\Theta(\gz)$ is independent of $m$ and varies smoothly with $v_1,v_2$. This is precisely the assumption we need to be able to use Lemma \ref{vfam}, and so moving forward any pair $(v_1,v_2)$ such that $v_1,v_2\in \bbr^+$ has a well-defined ``extremal'' $\tilde{\Theta}(\tilde{\gz})$ resulting in the existence of an admissible extremal Sasakian metric whenever 
$\tilde{\Theta}(\tilde{\gz})>0$ for $-1<\tilde{\gz} <1$. 

Notice that together with Propostion \ref{extrexistence}, this proves that when the scalar curvature $s_N$ of $(N,\omega_N)$ is non-negative, then each ray in $\gR_\bfw$ can be represented by extremal Sasaki metrics. Consequently this proves Theorem \ref{allextr}. 

Assuming $w_1>w_2$, the existence of an admissible quasi-regular CSC ray in the $\bfw$-Sasaki cone $\gt^+_\bfw$ corresponds to showing that for a choice of $(v_1,v_2) \neq (w_1,w_2)$ and the values of $m_i$, $n$, and $r$ given as above, the equation system 
\eqref{whatcis}, \eqref{whatkis}, and \eqref{candkeqn} is solved. 
Notice that with $s_{N_n} = A/n = \frac{ A s}{l_1(w_1v_2-w_2v_1)}$ the value $s$ (equivalently $m$) predictably cancels from the equation system 
\eqref{whatcis}, \eqref{whatkis}, and \eqref{candkeqn}.  In fact, we have (assuming $(v_1,v_2) \neq (w_1,w_2)$), that the system is equivalent to the equation $f(b)=0$ for $b>0$, where
$b = \frac{v_2}{v_1} \neq \frac{w_2}{w_1}$ is the slope alluded to in Remark \ref{rayremark} and

\begin{equation}\label{functionf}
\begin{array}{ccl}
f(b) & = &  w_1^{2(d_N+1)} b^{2 d_N+3}( A l_2 + l_1 (d_N + 1)  w_2-b (d_N + 1 ) l_1 w_1 )\\
\\
& - & w_1^{d_N+2}  w_2^{d_N} b^{d_N + 3}  ((d_N+1)  (A ( d_N+1) l_2 - l_1 (( d_N+1) w_1 + ( d_N+2) w_2)))\\
\\
& + & w_1^{d_N+1}  w_2^{d_N+1}  b^{d_N + 2}  (2 A d_N (d_N+2) l_2 - (d_N+1)(2d_N+3) l_1 (w_1 + w_2))\\
\\
& - &  w_1^{d_N}  w_2^{d_N+2}  b^{d_N + 1}(d_N+1) (A ( d_N+1) l_2 - l_1 ((d_N+2) w_1 + ( d_N+1) w_2))\\
\\
& + & w_2^{2 (d_N + 1)}( b (A l_2 + l_1 (d_N + 1) w_1)-( d_N + 1 ) l_1 w_2).
\end{array}
\end{equation}

When a solution $b\in \bbq^+$, we have a quasi-regular CSC metric and, since CSC is just a special case of extremal, when $b\in \bbr^+ \setminus \bbq^+$, we have an irregular CSC metric.

Since $f(b)$ is a polynomial which is formally defined for any real value of $b$ and
$$f(\frac{w_2}{w_1})=f'(\frac{w_2}{w_1})=f''(\frac{w_2}{w_1})=0$$ while
$$f '''(\frac{w_2}{w_1}) =  3(d_N+1)(d_N+2) l_1 w_1^{d_N} w_2^{d_N} (w_1 - w_2)> 0$$ and
$$\lim_{b\rightarrow + \infty} f(b) = - \infty,$$
we see that there is at least one solution $b \in (\frac{w_2}{w_1},+\infty)$ to $f(b) =0$.
This completes the proof of Theorem \ref{admjoincsc}.  \hfill$\Box$

In the special case when $\omega_N$ is K\"ahler-Einstein, $A=\cali_N >0$ and
$$l_1=\frac{\cali_N}{\gcd(w_1+w_2, \cali_N)}, \qquad l_2 = \frac{w_1+w_2}{\gcd(w_1+w_2, \cali_N)}$$ 
we set $t=w_2/w_1$ and realize that the equation $f(b)=0$ (and $b \neq t$) above reduces to the equation $h(b)=0$ where
$$h(b) = (1+d_N)b^{d_N+3} - (1+2t+d_N t) b^{d_N+2} + (2+d_N+t)t^{d_N+1} b - (1+d_N) t^{d_N+2}.$$
We easily check that $h(t)=h\,'(t)=0$, i.e. $h$ has a double root at the forbidden $b=t$. Moreover, 
$$h''(t) <0,\quad \text{and}\quad \lim_{b\rightarrow +\infty}h(b) = + \infty,$$
confirming what we already know from above, namely that there is at least one CSC ray. In this case, however,  we can also use Descartes' rule of signs to see that counting with multiplicity there are at most three positive roots of $g(b)$ and so there is at most one admissible CSC ray in the $\bfw$-Sasaki cone.

To finish the proof of the first part of Theorem \ref{admjoinse} we need to show that the $\bfw$-Sasaki cone has a Reeb vector field giving an admissible Sasaki-Einstein structure. Obviously this must then correspond to the one and only admissible CSC ray in the $\bfw$-Sasaki cone. The proof of the last statement of Theorem \ref{admjoinse} is given in Section \ref{srssec} below.

For co-prime integers $(v_1,v_2) \neq (w_1,w_2)$ and the values of $m_i$, $n$, and $r$ given as above, we realize that
Proposition \ref{KEprop} implies that the admissible extremal Sasaki structure associated to
$\xi_\bfv$ is  $\eta$-Einstein (and thus, up to transverse homothety, SE)
iff
\begin{equation}\label{KEintegral2}
\int_{-1}^1 \left((v_1-v_2)-(v_1+v_2)\gz) \right)((w_1v_2+w_2v_1)+ (w_1v_2-w_2v_1)\gz)^{d_N}d\gz = 0.
\end{equation}
Again, we set $w_2/w_1=t$ and $v_2/v_1=b$ and assume $b\neq t$. We also still assume $0<t< 1$ (i.e. $w_1>w_2$). Now equation \eqref{KEintegral2} is equivalent to
\begin{equation}\label{KEintegral3}
\int_{-1}^1 \left((1-b)-(1+b)\gz) \right)((b+t)+ (b-t)\gz)^{d_N}d\gz = 0.
\end{equation}
Let $j(b)$ denote the left hand side of \eqref{KEintegral3} and assume $t\in (0,1)\cap{\mathbb Q}$ is fixed. Now it is easy to check that
$$j(t) >0\quad\quad\text{and}\quad\quad \lim_{b\rightarrow +\infty}j(b) =-\infty.$$
Thus $\exists~ b \in (t,+\infty)$ such that \eqref{KEintegral3} is solved. 
This completes the proof of the first part of Theorem \ref{admjoinse}.  \hfill$\Box$

Although the majority of the SE structures obtained in this paper are irregular,
we can, however, produce many quasi-regular SE cases as follows: Set $b=kt$. Then 
\eqref{KEintegral3} is equivalent with
\begin{equation}\label{KEintegral4}
\int_{-1}^1 \left((1-kt)-(1+kt)\gz) \right)((k+1)+ (k-1)\gz)^{d_N}d\gz = 0.
\end{equation}
or
\begin{equation}\label{KEintegral5}
t= \frac{\int_{-1}^1 \left(1-\gz \right)((k+1)+ (k-1)\gz)^{d_N}d\gz}{k\int_{-1}^1 \left(1+\gz \right)((k+1)+ (k-1)\gz)^{d_N}d\gz}.
\end{equation}
\begin{lemma}
For $k>1$,
$$0<\int_{-1}^1 \left(1-\gz \right)((k+1)+ (k-1)\gz)^{d_N}d\gz<\int_{-1}^1 \left(1+\gz \right)((k+1)+ (k-1)\gz)^{d_N}d\gz.$$
\end{lemma}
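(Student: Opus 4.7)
The plan is to recognize both inequalities as consequences of an elementary monotonicity/symmetry argument applied to the polynomial
$$P(\gz) := ((k+1)+(k-1)\gz)^{d_N}.$$

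First I would verify that $P(\gz)>0$ on $[-1,1]$: since $k>1$, the linear factor $(k+1)+(k-1)\gz$ takes the value $2$ at $\gz=-1$ and the value $2k$ at $\gz=1$, and is linear and strictly increasing in $\gz$, hence positive throughout $[-1,1]$. The positivity of the left integral is then immediate, because $(1-\gz)\geq 0$ on $[-1,1]$ with equality only at the single point $\gz=1$, so $(1-\gz)P(\gz)$ is a nonnegative continuous function not identically zero.

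For the main inequality, I would rewrite the desired statement by subtracting the left-hand integral from the right-hand integral:
$$\int_{-1}^1 (1+\gz) P(\gz)\,d\gz - \int_{-1}^1 (1-\gz) P(\gz)\,d\gz = 2\int_{-1}^1 \gz\, P(\gz)\,d\gz,$$
so the inequality is equivalent to showing $\int_{-1}^1 \gz\, P(\gz)\,d\gz > 0$. Splitting the interval at the origin and substituting $\gz \mapsto -\gz$ in the integral over $[-1,0]$ yields
$$\int_{-1}^1 \gz\, P(\gz)\,d\gz = \int_0^1 \gz\bigl(P(\gz)-P(-\gz)\bigr)d\gz.$$
Because $k>1$ means $P$ is strictly increasing on $[-1,1]$, we have $P(\gz) > P(-\gz)$ for every $\gz \in (0,1]$, and the integrand above is strictly positive on $(0,1]$. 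Hence the integral is strictly positive, which gives the desired strict inequality.

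There is no serious obstacle here: the argument reduces to the obvious fact that integrating $\gz$ against a strictly increasing positive weight on a symmetric interval gives a positive result. The only thing to watch is that one must use $k>1$ (not merely $k\geq 1$) to get strict monotonicity of $P$; in the degenerate case $k=1$ the function $P$ is constant and the integral of $\gz\,P(\gz)$ vanishes, making the inequality fail as equality.
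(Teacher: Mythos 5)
Your proposal is correct. Both you and the paper begin with the same reduction: subtracting the two integrals shows the second inequality is equivalent to $\int_{-1}^{1}\gz\,((k+1)+(k-1)\gz)^{d_N}\,d\gz>0$. Where you diverge is in how that positivity is established. The paper computes the integral explicitly, obtaining the polynomial $p(k)=-d_N+(2+d_N)k-(2+d_N)k^{d_N+1}+d_Nk^{d_N+2}$ (up to an overall positive factor), and then argues by calculus that $p(1)=p'(1)=0$ while $p''(k)>0$ for $k>1$, so $p(k)>0$. You instead avoid the computation entirely by folding the integral over the origin, writing it as $\int_0^1\gz\bigl(P(\gz)-P(-\gz)\bigr)\,d\gz$ and invoking the strict monotonicity of $P(\gz)=((k+1)+(k-1)\gz)^{d_N}$ for $k>1$. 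Your route is more conceptual and shorter, and it makes transparent exactly where the hypothesis $k>1$ enters (you correctly flag that $k=1$ degenerates to equality); the paper's route, while more computational, produces the explicit value of the integral, which is the kind of closed-form information that is reused elsewhere in admissible-metric calculations. The only implicit point worth making explicit in your version is that $d_N\geq 1$ (true here, since $N$ is a compact K\"ahler manifold of positive dimension); if $d_N$ were $0$ the weight $P$ would be constant and the strict inequality would again fail.
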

\begin{proof}
The first inequality is obvious and the next is equivalent to 
$$\int_{-1}^{1} \gz ((k+1)+ (k-1)\gz)^{d_N}d\gz >0.$$
By integrating, this in turn is equivalent to
$$-d_{N_n} + (2+d_{N_n})k - (2+d_{N_n})k^{d_{N_n}+1} + d_{N_n}k^{d_{N_n}+2} >0.$$
Setting $p(k) = -d_{N_n} + (2+d_{N_n})k - (2+d_{N_n})k^{d_{N_n}+1} + d_{N_n}k^{d_{N_n}+2}$ we  observe that $p(1)=p'(1)=0$ while $p\,''(k) > 0$ for all $k>1$. Thus $p(k)>0$ for all $k>1$ and hence the inequality holds.
 \end{proof}
Now it follows that for any given $k\in (1,+\infty)\cap {\mathbb Q}$, $\exists~ t \in (0,1)\cap {\mathbb Q}$ (determined by \eqref{KEintegral5}) such that if the co-prime integers $w_1$ and $w_2$ are such that $w_2/w_1=t$ and then co-prime integers $v_1$ and $v_2$ are such that $v_2/v_1= kt$ (and $l_1$ and $l_2$ are chosen according to Lemma \ref{c10}) then the ray determined by $(v_1,v_2)$ in the $\bfw$-Sasaki cone contains a quasi-regular SE structure.

\begin{example}\label{Ypq} $Y^{p,q}$. These is an infinite sequence of toric contact structures on $S^2\times S^3$ that admit an SE metric in their Sasaki cone discovered by the physicists \cite{GMSW04a}. The pair $(p,q)$ consists of relatively prime positive integers satisfying $1\leq q<p$. This was treated in Example 4.7 of \cite{BoPa10} although the conventions\footnote{In particular, there we chose $w_1\leq w_2$; whereas, here we use the opposite convention, $w_1\geq w_2$.} are slightly different. Here we set
\begin{equation}\label{pqw}
\bfw=\frac{1}{\gcd(p+q,p-q)}\bigl(p+q,p-q\bigr).
\end{equation}
Note that the conditions on $p,q$ eliminate the case $\bfw=(1,1)$. It is also easy to see that 
\begin{equation}\label{pqrel}
l_1=\gcd(p+q,p-q),\qquad l_2=p.
\end{equation}
Now we have $N=S^2$ so $\cali_N=2$ and $d_N=1$. Most of the SE metrics on $Y^{p,q}$ are irregular; however, one can obtain quasi-regular solutions from Equation \eqref{KEintegral5} which simplifies to
$$t=\frac{2+k}{k(1+2k)}.$$
The quasi-regular SE solutions are given by the Diophantine equation \cite{GMSW04a}
\begin{equation}\label{qrYpq}
4p^2-3q^2=n^2, 
\end{equation}
for some $n \in \bbz$. Note that the Equation \eqref{qrYpq} has an infinite number of solutions. A particular example is $(p,q)=(13,8)$ which in terms of our join parameters gives $\bfw=(21,5),l_1=1,l_2=13$. The Reeb vector field corresponding to the SE metric is determined by $\bfv=(5,7)$ which gives the the base orbifold Hirzebruch surface $(S_{122},\grD)$ with
$$\grD= (1-\frac{1}{65})D_1 + (1-\frac{1}{91})D_2$$
and $m=13$. Of course, this base orbifold has a positive KE orbifold metric.

In general, the following result follows from Theorem \ref{admjoinse}.
\begin{proposition}\label{ypqse}
The Reeb vector field of the unique Sasaki-Einstein metric of $Y^{p,q}$ lies in the $\bfw$-Sasaki cone with $\bfw$ determined by Equations (\ref{pqw}).
\end{proposition}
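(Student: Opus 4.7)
The plan is to invoke Theorem \ref{admjoinse} for the specific join data corresponding to $Y^{p,q}$, then appeal to uniqueness of the Sasaki--Einstein metric on a toric contact manifold. First, I verify that $\bfw$ from Equation (\ref{pqw}) and $(l_1,l_2)$ from Equation (\ref{pqrel}) satisfy the relative Fano index formulas of Lemma \ref{c10} with $N=\bbc\bbp^1$, $\cali_N=2$, $d_N=1$. Setting $g:=\gcd(p+q,p-q)$, the identities $(p+q)+(p-q)=2p$ and $(p+q)-(p-q)=2q$ combined with $\gcd(p,q)=1$ force $g\in\{1,2\}$: if $p+q$ is odd then $g=1$, and if $p+q$ is even (both $p,q$ odd) then $g=2$. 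In case $g=1$ one has $\bfw=(p+q,p-q)$, $|\bfw|=2p$, and $\gcd(|\bfw|,2)=2$, so the formulas of Lemma \ref{c10} yield $l_1=1=g$ and $l_2=p$. In case $g=2$ one has $|\bfw|=p$ with $p$ odd, so $\gcd(|\bfw|,2)=1$, giving $l_1=2=g$ and $l_2=p$. Both cases match Equation (\ref{pqrel}).

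Consequently, Theorem \ref{admjoinse} applies and produces a Reeb vector field $\xi_\bfv$ in the 2-dimensional $\bfw$-Sasaki cone $\gt^+_\bfw$ on $M_{l_1,l_2,\bfw}$ whose Sasakian structure is Sasaki--Einstein. What remains is to identify this join manifold, together with the Sasaki--Einstein structure just constructed, with $Y^{p,q}$ and its SE structure. Since $M_{l_1,l_2,\bfw}=S^3\star_{l_1,l_2}S^3_\bfw$ is the join of two toric Sasakian structures, it inherits an effective $T^3$-action via Equation (\ref{Taction}); its contact moment cone can be read off directly from $\bfw$ and $(l_1,l_2)$, and after translating the ordering convention on $(w_1,w_2)$, this is precisely the toric description of $Y^{p,q}$ given in Example 4.7 of \cite{BoPa10}. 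Thus $M_{l_1,l_2,\bfw}$ and $Y^{p,q}$ agree as toric contact five-manifolds.

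Finally, by the Futaki--Ono--Wang uniqueness theorem for Sasaki--Einstein metrics on compact toric Sasaki manifolds, the Sasaki--Einstein Reeb vector field of $Y^{p,q}$ is unique (up to transverse biholomorphism) within the full 3-dimensional Sasaki cone. Hence the SE Reeb vector field $\xi_\bfv$ produced by Theorem \ref{admjoinse}, which lies in $\gt^+_\bfw$, must coincide with the SE Reeb vector field of Gauntlett--Martelli--Sparks--Waldram, proving the proposition.

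The main obstacle is the toric identification in the second paragraph: one must match the weight data of the join $T^3$-action with the standard toric $Y^{p,q}$ data, dealing with a change-of-basis of $\gt_3$ and the ordering convention $w_1\geq w_2$ (versus $w_1\leq w_2$ in \cite{BoPa10}). Once this bookkeeping is done, the proposition reduces to the existence statement of Theorem \ref{admjoinse} combined with toric SE uniqueness.
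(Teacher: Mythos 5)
Your proposal is correct and follows essentially the same route as the paper: the paper simply observes that the result ``follows from Theorem \ref{admjoinse}'' once the join data $\bfw$, $(l_1,l_2)$ for $Y^{p,q}$ has been matched to the relative Fano indices of Lemma \ref{c10}, which is exactly what you verify (your case analysis on $\gcd(p+q,p-q)\in\{1,2\}$ is the bookkeeping the paper leaves implicit). The only quibble is attribution: the uniqueness of the toric Sasaki--Einstein metric that you invoke in the last step is due to Cho--Futaki--Ono \cite{CFO07} rather than Futaki--Ono--Wang, but this does not affect the argument.
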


\end{example}

As the examples in the next subsection will illustrate we can also produce many
non-Einstein quasi-regular CSC rays in all dimensions.

\subsection{A Special Case: $N= \bbc\bbp^{r}$}\label{specialsec}
Consider the special case of Section \ref{gendex} in which case $A=\cali_N = r+1$. Now if we let $(l_1,l_2)$ be any relatively prime pair of positive integers except $(\frac{r+1}{\gcd(|\bfw|, r+1)}, \frac{|\bfw|}{\gcd(|\bfw|, r+1)})$ we know that the CSC ray from the proof of Theorem \ref{admjoincsc} is not  Sasaki-Einstein by Lemma \ref{c10}, and by Equation \eqref{c1cald2} $c_1(\cald_{l_1,l_2,\bfw})\neq 0$. 
Again, for the majority of choices of $(w_1,w_2)$, the CSC ray discovered will be irregular. However we can produce quite
a lot of quasi-regular CSC rays as the example below shows. 

This case is studied in much more depth in \cite{BoTo14b}. In particular, it is shown there that if $r,l_1$ and $\bfw$ are fixed, there is only a finite number of diffeomorphism types among the manifolds $M_{l_1,l_2,\bfw}$. So for each $r>1,l_1,w_1,w_2$ with $w_1>w_2$, there exists a smooth $2r+3$-dimensional manifold $M_{l_1,\bfw}$ which admits a countably infinite number of contact structures of Sasaki type each with a compatible Sasaki metric of constant scalar curvature.

\begin{example}
For example with $A=r+1$, $l_1=\frac{r}{\gcd(|\bfw|, r)}$, and $l_2 = \frac{|\bfw|}{\gcd(|\bfw|, r)}$ we set $t=\frac{w_2}{w_1}$ and $b=k t$ in $f(b)$ above. Then the equation $f(b)=0$ is equivalent to 
$$t= \frac{r -(r + 1)k + k^{r+1}}{k(1-(r+1)k^{r} + r k^{r+1})}.$$
It is a straightforward calculus exercise to check that for $k>1$  we get a solution $0<t<1$ as predicted in the previous section
(so $b= k t > t =  \frac{w_2}{w_1}$ and $w_1 > w_2$). In particular, if we pick a rational $k>1$  we get a rational $t$. This value of $t$ will determine $(w_1,w_2)$ and then with $(v_1,v_2)$ such that $v_2/v_1=b = k t$ we have our CSC quasi-regular Sasaki metric.
\end{example}

\begin{example}
Let us assume that $r=2$ (hence $A=3$), $l_1=1$ and $\bfw=(3,2)$. So to have smooth 7-manifolds $M^7_{1,l_2,(3,2)}$ we must have $\gcd(l_2,6)=1$. Then $f(b) = 3(2-3b)^3 g(b)$, where 
$$g(b) = 81 b^5 - 27 (l_2-4) b^4 - 54 (l_2-2) b^3 + 36 (l_2-1) b^2 + 8 (l_2-6) b - 16.$$
Now $g(2/3) = -32<0$ and $\displaystyle \lim_{b-> + \infty}g(b) = + \infty$, justifying the solution to $g(b)=0$ in the interval $(2/3, +\infty)$ as already established.
Notice however that $g(0) = -16<0$ and $g(1/3) = (13 l_2-115)/3$. So for any $l_2 \geq 9$ with $\gcd(l_2,6)=1$, we have two additional solutions to $g(b)=0$ in the interval $(0,2/3)$. Furthermore, one can check that the other two solutions are negative, so there are 3 rays of CSC Sasaki metrics in the $\bfw$-cone. It can also be checked that for $l_2=1,5,7$, there is only one solution to $g(b)=0$. We thus have

\begin{proposition}\label{3csc}
For each $l_2\geq 9$ relatively prime to $6$ there are three distinct constant scalar curvature rays in the $\bfw$-Sasaki cone of the toric contact 7-manifold $(M^7_{1,l_2,(3,2)},\cald_{1,l_2,(3,2)})$.
\end{proposition}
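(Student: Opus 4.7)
The plan is to substitute the specific parameters $r=2$, $A = \cali_N = r+1 = 3$, $l_1 = 1$, and $\bfw = (w_1,w_2) = (3,2)$ into the polynomial $f(b)$ of \eqref{functionf}, and then perform a sign analysis to locate three distinct positive roots, each of which will yield a CSC ray via the proof of Theorem \ref{admjoincsc}. The smoothness hypothesis $\gcd(l_2, l_1 w_1 w_2) = \gcd(l_2,6) = 1$ is exactly the condition imposed on $l_2$, so no extra regularity check on $M^7_{1,l_2,(3,2)}$ is required.

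First I would perform the substitution, which (as indicated immediately above the proposition) produces the factorization $f(b) = 3(2-3b)^3\, g(b)$ with
\[
g(b) = 81\, b^5 - 27(l_2-4)\, b^4 - 54(l_2-2)\, b^3 + 36(l_2-1)\, b^2 + 8(l_2-6)\, b - 16.
\]
The cube factor $(2-3b)^3$ accounts for the forbidden ray $b = w_2/w_1 = 2/3$, so genuine CSC rays in the $\bfw$-Sasaki cone correspond exactly to positive zeros of $g$.

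Next I would evaluate $g$ at $b = 0,\ 1/3,\ 2/3$ and in the limit $b\to +\infty$. Direct computation gives $g(0) = -16$, $g(2/3) = -32$, and $g(b) \to +\infty$, while the key value $g(1/3) = (13 l_2 - 115)/3$ is strictly positive precisely when $l_2 \geq 9$. The intermediate value theorem then supplies one positive root in each of the intervals $(0,1/3)$, $(1/3,2/3)$, and $(2/3,+\infty)$. Via the identification $b = v_2/v_1$ from Remark \ref{rayremark}, these three roots determine three distinct rays $\bar{\bfv} \in \gR_\bfw$, and by the analysis at the end of Section \ref{adsas}, each one carries an admissible CSC Sasaki metric (quasi-regular when $b \in \bbq^+$, irregular otherwise).

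The only remaining point is to rule out a fourth positive root, so that the three rays produced are genuinely distinct. Since $g$ has degree five, the cleanest route is Descartes' rule of signs: for $l_2 \geq 9$ the coefficient sequence of $g$ is $(+,-,-,+,+,-)$, exhibiting exactly three sign changes, so $g$ admits at most three positive roots, matching the three already exhibited. I expect the bookkeeping in this final counting step, and in particular pinning down the exact threshold $l_2 \geq 9$ from the sign of $g(1/3)$, to be the most delicate (though still elementary) part of the argument.
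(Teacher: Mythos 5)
Your proposal is correct and follows essentially the same route as the paper: substitute the parameters into $f(b)$, factor out $3(2-3b)^3$, and run the sign analysis $g(0)=-16<0$, $g(1/3)=(13l_2-115)/3>0$ for $l_2\geq 9$, $g(2/3)=-32<0$, $g(b)\to+\infty$, to locate one root in each of $(0,1/3)$, $(1/3,2/3)$, $(2/3,+\infty)$. The only (harmless) divergence is in the final counting step, where the paper asserts that the two remaining roots of the quintic are negative, while you instead invoke Descartes' rule of signs on the coefficient pattern $(+,-,-,+,+,-)$ to cap the number of positive roots at three; both close the argument, and yours is the more explicitly checkable of the two.
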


It also follows from our results in \cite{BoTo14b} that infinitely many of the manifolds $M^7_{1,l_2,(3,2)}$ are diffeomorphic. Thus, there exists an infinite subsequence $s_j\subset \{l_2\}$ of the integers $l_2\geq 9$ giving distinct contact structures $\cald_{s_j}$ of Sasaki type occurring on the same 7-manifold all containing three rays of CSC Sasaki metrics in their $\bfw$-Sasaki cone.
\end{example}

\begin{example}\label{w1=w2} Wang-Ziller manifolds.
In the calculus analysis we have done on $f(b)$ so far, we have assumed that $w_1>w_2$. For arguments sake let us assume that $w_1=w_2=1$ in which case our manifolds $M_{l_1,l_2,(1,1)}=M^{1,r}_{l_2,l_1}$, a Wang-Ziller manifold \cite{WaZi90}. If we assume that $N=\bbc\bbp^2$ and pick $l_1=1$, we know from Proposition 2.3 of Wang and Ziller that $M^{1,2}_{l_2,1}$ is $S^2\times S^5$ when $l_2$ is even and the non-trivial $S^5$-bundle over $S^2$, which we denote by $S^2\tilde{\times}S^5$, when $l_2$ is odd. So there are exactly these two diffeomorphism types. Moreover, we know that we have at least one CSC ray, namely the regular ray in the $S^1$-bundle over the product $N \times \bbc\bbp^1$. This case corresponds to $b=1$, although $f(b)$ has no geometric meaning for $b=1$.  However, we also get that
$f(b) = -3 ( b-1)^4g(b)$ with
$$g(b)=  (1 + (3-l_2) b - 4 b^2 l_2 + 6 b^2 + (3-l_2) b^3 + b^4).$$
Now we observe that $g(0) = 1>0$, $g(1) = 2(7-3l_2)$, and $\displaystyle \lim_{b\rightarrow +\infty} g(b) = +\infty$. So for $l_2\geq 3$ $f(b)$ has at least 2 roots not equal to $1$ ; one in the interval $(0,1)$ and one in the interval $(1,+\infty)$. Thus we have at least three CSC rays in this case as well.

Now the Wang-Ziller manifolds are toric, in fact, they are homogeneous, and in our case $M^{1,2}_{l_2,1}$ have a four-dimensional Sasaki cone, and when $l_2\geq 3$ the $\bfw$-Sasaki cone (i.e. the 2-dimensional Sasaki cone associated with $S^3$) has three CSC rays, one regular and the other two irregular or quasi-regular. Notice also in our case the first Chern class of the contact bundle is $c_1(\cald_{1,l_2})=(3l_2-2)\grg$. Summarizing we have

\begin{theorem}\label{WZex}
The 7-manifolds $S^2\times S^5$ and $S^2\tilde{\times}S^5$ admit countably infinite inequivalent toric contact structures $\cald_{1,l_2}$ of Reeb type with $l_2$ even for the former and $l_2$ odd for the latter. Furthermore, when $l_2\geq 3$ these contact structures admit three distinct rays of Sasaki metrics with constant scalar curvature in $\gt^+_\bfw$.
\end{theorem}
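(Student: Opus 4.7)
My plan is to assemble the theorem from pieces that are essentially already set up in the preceding example. The manifolds in question are the joins $M^7_{1,l_2,(1,1)} = S^5 \star_{1,l_2} S^3_{(1,1)}$, where $S^5$ is the standard regular Sasaki manifold arising as the $S^1$-bundle over $(\bbc\bbp^2,\gro_{FS})$. Since this is the Wang--Ziller manifold $M^{1,2}_{l_2,1}$, the first step is to invoke Proposition~2.3 of \cite{WaZi90} to identify the diffeomorphism type: $M^{1,2}_{l_2,1} \cong S^2 \times S^5$ when $l_2$ is even and $M^{1,2}_{l_2,1} \cong S^2 \tilde\times S^5$ when $l_2$ is odd. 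Thus as $l_2$ ranges over positive integers of a fixed parity we obtain contact manifolds $(M^7_{1,l_2,(1,1)},\cald_{1,l_2,(1,1)})$ all supported on the same underlying smooth 7-manifold.

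Next I would prove that these contact structures are pairwise inequivalent. By Equation~\eqref{c1cald2} applied with $\cali_N = 3$, $|\bfw|=2$, and $l_1=1$, we have $c_1(\cald_{1,l_2,(1,1)}) = (3l_2-2)\grg$, where $\grg$ generates $H^2(M^{1,2}_{l_2,1},\bbz)\cong\bbz$ (the latter isomorphism following from Proposition~\ref{simcon} together with the algorithm of Section~4, or alternatively from the known cohomology of $S^2\times S^5$ and $S^2\tilde{\times}S^5$). Since the integers $3l_2-2$ are distinct for distinct $l_2$, the contact structures have non-isomorphic first Chern classes up to sign and are therefore inequivalent. This produces the countably infinite families of inequivalent toric contact structures of Reeb type on each of the two diffeomorphism classes, establishing the first statement.

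For the CSC claim when $l_2\geq 3$, I would specialize the analysis already carried out just before the theorem. Substituting $w_1=w_2=1$, $l_1=1$, $A=\cali_N=3$ and $d_N=2$ into the polynomial $f(b)$ of Equation~\eqref{functionf}, one obtains the factorization
\[
f(b) = -3(b-1)^4\,g(b), \qquad g(b)= 1+(3-l_2)b+(6-4l_2)b^2+(3-l_2)b^3+b^4.
\]
Each simple positive root of $g$ yields an admissible quasi-regular or irregular CSC ray in $\gt_\bfw^+$ via the K\"ahler--Sasaki correspondence of Proposition~\ref{screl} and the discussion of Section~\ref{adsas}. Evaluating $g(0)=1>0$, $g(1)=2(7-3l_2)<0$ for $l_2\geq 3$, and $\lim_{b\to\infty}g(b)=+\infty$, the intermediate value theorem supplies two distinct positive roots, one in $(0,1)$ and one in $(1,\infty)$. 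These produce two CSC rays distinct from the regular ray.

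The remaining point, and the only mild subtlety, is that the value $b=1$ itself corresponds to the regular Reeb vector field $\xi_{1,l_2,(1,1)}$ from Equation~\eqref{Reebjoin}, which is precisely where the factor $(b-1)^4$ obstructs direct application of $f(b)=0$. However, this ray carries the product Sasaki structure induced from the CSC K\"ahler product $(\bbc\bbp^2\times\bbc\bbp^1,\gro_{FS}\oplus\gro_{FS})$, as noted in the paragraph following Equation~\eqref{Reebjoin}, and so is itself a CSC ray. Since its slope $b=1$ differs from the two roots of $g$ in $(0,1)$ and $(1,\infty)$, these three rays are genuinely distinct in the ray space $\gR_\bfw$ of Remark~\ref{rayremark}. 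The main technical issue to verify carefully is thus precisely this bookkeeping at $b=1$: one must check that the regular ray is not an extraneous artifact of the factor $(b-1)^4$ but instead an honest CSC solution coming from the product K\"ahler geometry. Once this is in place, combining the three items yields Theorem~\ref{WZex}.
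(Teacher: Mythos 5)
Your proposal is correct and follows essentially the same route as the paper's own argument in Example \ref{w1=w2}: the Wang--Ziller identification of the diffeomorphism type, inequivalence of the $\cald_{1,l_2}$ via $c_1(\cald_{1,l_2})=(3l_2-2)\grg$, the factorization $f(b)=-3(b-1)^4g(b)$ with the sign changes $g(0)>0$, $g(1)=2(7-3l_2)<0$ for $l_2\geq 3$, and $\lim_{b\to\infty}g(b)=+\infty$ giving two roots, plus the regular product CSC ray at $b=1$. Your explicit remark that $b=1$ must be handled separately as the product Kähler CSC structure (rather than as a root of $f$) is exactly the point the paper makes when it says $f(b)$ has no geometric meaning at $b=1$.
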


As $l_2$ varies the contact structures are clearly inequivalent as contact structures, not just as toric contact structures. 

\begin{remark}\label{prem}
In this Wang-Ziller case two of the three CSC metrics are actually equivalent under a transformation in the Weyl group $\bbz_2$ acting on the unreduced $\bfw$-cone $\gt^+_\bfw$. This transformation sends a root to its reciprocal. Thus, there are only two CSC Sasaki metrics in the moduli space $\grk$. See the proof of Theorem 1.3 in \cite{Leg10} for another approach to this phenomenon. 
\end{remark}

\end{example}

\subsection{Multiple CSC Rays}
The multiple CSC rays in Proposition \ref{3csc} and Theorem \ref{WZex} illustrate a somewhat common phenomenon that was first illustrated in the case of quadrilateral toric structures ($S^3$-bundles over $S^2$) by Legendre \cite{Leg10}. Consider $f(b)$ in \eqref{functionf}. 
As already stated, any positive solution $b \neq \frac{w_2}{w_1}$ to the equation $f(b)=0$ corresponds to a CSC ray in the 
$\bfw$-Sasaki cone. So far we know that, assuming $w_1>w_2$, there is at least one solution in the interval $(\frac{w_2}{w_1}, +\infty)$.
Since $$f(\frac{w_2}{w_1})=f'(\frac{w_2}{w_1})=f''(\frac{w_2}{w_1})=0$$ while
$$f '''(\frac{w_2}{w_1}) =  3(d_N+1)(d_N+2) l_1 w_1^{d_N} w_2^{d_N} (w_1 - w_2)> 0$$
we know that for $b<\frac{w_2}{w_1}$ sufficiently close to $\frac{w_2}{w_1}$, we have $f(b)<0$. Further it is easy to see that $f(0)<0$. Now we notice that
$f(\frac{w_2}{2w_1})$ is a linear function of $l_2$ with slope equal to
$$\frac{Aw_2^{2d_N+3}}{2^{2d+3}w_1}\left[1+2^{d_N}\left(2^{d_N+2} - (d_N^2 + 2 d_N +5) \right)\right].$$
When $A>0$, which is equivalent to the scalar curvature $s_N$ of $(N,\omega_N)$ being positive, then this slope is positive and thus for sufficiently large value of $l_2$ we have that $f(\frac{w_2}{2w_1})>0$. In that case we have
at least two more roots; one in the interval $(0,\frac{w_2}{2w_1})$ and one in the interval $(\frac{w_2}{2w_1},\frac{w_2}{w_1})$.  As Example \ref{w1=w2} illustrates, even if $w_1=w_2=1$, we can have several CSC rays in the $\bfw$-Sasaki cone. This proves Theorem \ref{3cscthm}.

\subsection{Sasaki-Ricci solitons}\label{srssec}
In this section we finish the proof of Theorem \ref{admjoinse} by proving the existence of Sasaki-Ricci solitons for each ray in the $\bfw$-Sasaki cone. Our definition of Sasaki-Ricci soliton is a slight generalization of the definition found in \cite{FOW06}). In effect, we view the Sasaki structures of the entire ray to be Sasaki-Ricci solitons whenever there is a choice of Reeb vector field in the ray that would give a Sasaki-Ricci solution according to the definition in \cite{FOW06} where the constant $\grl$ in Definition \ref{srsdef} below is fixed. Note that by Proposition 2.2 of \cite{CFO07} the Lie algebra of holomorphic Hamiltonian vector fields defined in \cite{FOW06} coincides with the Lie algebra of transverse holomorphic vector fields. We mention also that Sasaki-Ricci solitons on toric 5-manifolds were studied in \cite{LeTo13}.
\begin{definition}\label{srsdef}
A Sasaki Ricci Soliton (SRS) is a transverse K\"ahler Ricci soliton, that is the equation
$$\rho^T -\lambda \omega^T = \pounds_V\omega^T$$
holds where  $V$ is some transverse holomorphic vector field, and $\lambda$ is some constant.
\end{definition}

We are interested here in shrinking Sasaki Ricci solitons, that is, when the constant $\grl$ is positive.
Now to prove the last statement of Theorem \ref{admjoinse}, we realize that
generalizing \eqref{KEodes} to
\begin{equation} \label{KRSode}
\frac{F'(\gz)}{\gp(\gz)}   - a \frac{F'(\gz)}{\gp(\gz)}= 2s_{N_n} - 2 \lambda 
(\gz + 1/r),
\end{equation}
where $a\in\bbr$ is some constant, corresponds to generalizing
the KE equation $\rho -\lambda \omega=0$ to the K\"ahler Ricci soliton (KRS) equation
$$\rho -\lambda \omega = \pounds_V\omega,$$
with $V = \frac{a}{2} grad_g \gz$.
By following e.g. Section 3 in \cite{ACGT08b} and adapting it to our more general endpoint conditions \eqref{positivityF} (but letting $d_0=d_\infty=0$), it is now straightforward and completely standard
to verify that Proposition \ref{KEprop} generalizes with ``KE'' replaced by ``KRS'',
\eqref{KEintegral} replaced by
\begin{equation}\label{KRSintegral}
\int_{-1}^1  e^{-a\, \gz}\left((1-\gz)/m_2 -(1+\gz)/m_1\right) {\gp(\gz)} d\gz = 0,
\end{equation}
and \eqref{KEmetricF} replaced by
\begin{equation}\label{KRSmetricF}
F(\gz) :=e^{a\,\gz} \int_{-1}^\gz e^{-a\,t} \left((1-t)/m_2 -(1+t)/m_1\right) {\gp(t)} dt.
\end{equation}

It is not hard to show that equation \eqref{KRSintegral} can always be solved for some $a\in \bbr$. This ``$a$'' varies smoothly with $v_1$ and $v_2$ and moreover $mF(\gz)$, hence $m\Theta(\gz)$, is clearly independent of $m$ and varies smoothly with $v_1$ and $v_2$. Finally, $grad_g \gz = -JK$ and $K$ is a Hamiltonian Killing vector field that lifts to the Sasaki manifold as we saw in the proof of Lemma \ref{vfam}.\footnote{See also this proof for the scaling factor between the admissible metric and the resulting transverse metric. The reciprocal of this scaling factor applies to the lift of $V$ above and thus it is easy to see that the resulting vector field (which is basically just a multiple of $H_1$) on the Sasaki manifold depends smoothly on $v_1$ and $v_2$ as well. (see also Lemma 7.1 in \cite{BoTo11}).} Thus we realize that when $\omega_N$ is K\"ahler-Einstein, $A=\cali_N >0$ and the pair $(l_1,l_2)$ is given by Lemma \ref{c10}, the Sasaki structure associated to every single ray, $\xi_\bfv$, in our $\bfw$-Sasaki cone is (up to isotopy) a Sasaki-Ricci soliton.
This proves the last statement in Theorem \ref{admjoinse}.\hfill$\Box$

Our set-up, starting from a join construction, allows for cases where no regular ray in the $\bfw$-Sasaki cone exists.
If, however, the given $\bfw$-Sasaki cone does admit a regular ray, then the transverse K\"ahler structure is a smooth K\"ahler Ricci soliton and the existence of an SE metric in some  ray of the Sasaki cone is predicted by the work of \cite{MaNa13}.

\def\cprime{$'$} \def\cprime{$'$} \def\cprime{$'$} \def\cprime{$'$}
  \def\cprime{$'$} \def\cprime{$'$} \def\cprime{$'$} \def\cprime{$'$}
  \def\cdprime{$''$} \def\cprime{$'$} \def\cprime{$'$} \def\cprime{$'$}
  \def\cprime{$'$}
\providecommand{\bysame}{\leavevmode\hbox to3em{\hrulefill}\thinspace}
\providecommand{\MR}{\relax\ifhmode\unskip\space\fi MR }
\providecommand{\MRhref}[2]{%
  \href{http://www.ams.org/mathscinet-getitem?mr=#1}{#2}
}
\providecommand{\href}[2]{#2}

\end{document}